\newtheorem{theorem}{Theorem}[section]
\newtheorem{lemma}[theorem]{Lemma}
\newtheorem{proposition}[theorem]{Proposition}
\newtheorem{corollary}[theorem]{Corollary}
\theoremstyle{definition}
\newtheorem{definition}[theorem]{Definition}
\newtheorem{problem}[theorem]{Open problem}
\DeclareMathOperator{\outcome}{o}
\DeclareMathOperator{\outcomeL}{o_L}
\DeclareMathOperator{\outcomeR}{o_R}
\DeclareMathOperator{\birth}{\tilde{b}}
\newcommand{\set}[1]{\mathcal{#1}}
\NewDocumentCommand\pf{smO{}}{%
  \ensuremath{
    \operatorname{pf}_{%
      \IfBooleanTF{#1}{#3}{\set{#3}}%
    }(%
      \IfBooleanTF{#1}{#2}{\set{#2}}%
    )
  }%
}
\DeclareMathOperator{\maug}{\set{M}_\text{aug}}
\DeclareMathOperator{\ltp}{l}
\DeclareMathOperator{\ntp}{n}
\DeclareMathOperator{\rtp}{r}
\newcommand{\tomb}{
    \begin{tikzpicture}[scale=0.11]
        \filldraw (0,0) rectangle (1,2);
    \end{tikzpicture}
    \@ifnextchar,{\hspace{0.1em}}{}%
}
\newcounter{authcount}
\NewDocumentCommand{\authDetails}{m m m o}{%
    \stepcounter{authcount}%
    \IfNoValueTF{#4}{%
        \author[\arabic{authcount}]%
    }%
    {%
        \author[#4]%
    }%
    {%
        \mbox{#1\,$^{\textrm{\href{mailto:#2}{\Letter}}\,\,%
        \ifx&#3&\else\raisebox{-0.2ex}{\orcidlink{#3}}\,\fi}$}%
    }%
}
\title{On sums of $\mathscr{P}$-free forms under mis\`ere play}
\affil[1]{ Memorial University of Newfoundland\\Canada}
\affil[2]{ University of Maryland\\College Park\\USA}
\date{}
\begin{document}
\maketitle

\begin{abstract}
    \noindent
    Milley and Renault proved an interesting characterisation of invertible
    elements in the dead-ending universe: they are the games with no
    subpositions of outcome $\mathscr{P}$ (the `$\mathscr{P}$-free' games). We
    generalise their approach to obtain a stronger result and show in
    particular that the set of $\mathscr{P}$-free blocking games is closed
    under addition, which yields that every $\mathscr{P}$-free blocking game is
    invertible modulo the blocking universe. This has consequences for the
    invertible subgroups of various other mis\`ere monoids.
\end{abstract}

\section{Introduction}

Invertibility in mis\`ere is complicated. In normal play, the situation is
simple: each game is invertible, and its inverse is its conjugate (the form
obtained by swapping the roles of Left and Right). So, the set of normal play
values yields a group. In the full mis\`ere universe $\set{M}$, however, there
are \emph{no} (non-zero) invertible games whatsoever, as shown by Mesdal and
Ottaway \cite[Theorem 7 on p.~5]{mesdal.ottaway:simplification}. Following the
breakthroughs of Plambeck and Siegel (see \cite{plambeck.siegel:misere,
plambeck:taming}), mis\`ere research has focused on \emph{restricted} sets of
games, where the comparison relation is relaxed, and invertible elements do
indeed appear.

A shocking discovery was that restricted mis\`ere monoids exist in which a game
can be invertible but its inverse is \emph{not} its conjugate. The first such
example was an impartial game found by Plambeck and Siegel \cite[A.6 on
p.~617]{plambeck.siegel:misere}. This was later followed by a partizan example
by Milley \cite[Corollary 4 on p.~11]{milley:partizan}, and Davies and Yadav
then found more \cite[pp.~24--25]{davies.yadav:invertibility}.

But the cruelty of mis\`ere relented somewhat for the work of Larsson,
Nowakowski, and Santos on Absolute Combinatorial Game Theory
\cite{larsson.nowakowski.ea:absolute}. They defined special sets of games
called \emph{(absolute) universes} \cite[Definition 23 on
p.~117]{larsson.nowakowski.ea:absolute}, which are closed under various
properties. In addition to the many things they were able to prove with these
objects, the usefulness of the universe was bolstered by the fact that
researchers had already been studying sets that happened to be universes: in
particular, the dicot universe $\set{D}$ and the dead-ending universe
$\set{E}$. A form is a \emph{dicot} if every proper subposition is a dicot and
either both players have an option or else no player has an option; it is
\emph{dead-ending} if, once a player runs out of moves, they can never move
again in any subsequent position. Indeed, it was then proven by Larsson,
Milley, Nowakowski, Renault, and Santos that both of these universes have the
behaviour that the inverse of a game (when it exists) is its conjugate
\cite[Theorems 25 and 27 on pp.~262--263]{larsson.milley.ea:reversibility};
such behaviour is often called the \emph{conjugate property} (see
\cite[Definition 2 on p.~248]{larsson.milley.ea:reversibility}).

Davies and Yadav then showed that \emph{every} universe has the conjugate
property \cite[Theorem 3.7 on p.~11]{davies.yadav:invertibility}, and also gave
a characterisation of the invertible elements in every universe \cite[Theorem
3.8 on p.~13]{davies.yadav:invertibility}. Characterisations had previously
been found for the dicot universe $\set{D}$ by Fisher, Nowakowski, and Santos
\cite[Theorem 12 on p.~7]{fisher.nowakowski.ea:invertible}, and also for the
dead-ending universe $\set{E}$ by Milley and Renault \cite[Theorems 19 and 22
on pp.~11--12]{milley.renault:invertible}.

As was noted in \cite[p.~14]{davies.yadav:invertibility}, the previous
characterisation of invertibility in $\set{E}$ is somewhat surprising when
compared with the more general characterisation due to Davies and Yadav, the
latter of which is almost recursive in requiring that proper subpositions be
invertible (among some other technical statements we do not reproduce here).
Milley and Renault were able to give a characterisation by referring only to
the outcomes of the subpositions of a game; in particular, by requiring that no
subposition be a $\mathscr{P}$-position (a win for the second player)---such a
form is called $\mathscr{P}$-free. It was posed as an open problem to
investigate when alternative characterisations like this exist for other
universes \cite[Open problem 3.11 on p.~14]{davies.yadav:invertibility}.

The arguments that Milley and Renault employed do not generalise to every
universe, but we will show they can be pushed significantly. They used a
concept called \emph{tipping points} to demonstrate that the set of
$\mathscr{P}$-free dead-ending games is closed under addition. In this paper,
we find larger sets of $\mathscr{P}$-free games that are closed under addition;
this will then have reasonably far-reaching consequences to many other mis\`ere
monoids and universes, and in particular to understanding their invertible
elements.

We follow roughly the same path taken by Milley and Renault in their proof,
although we achieve greater generality by considering a wider context, which
has the consequence of making the results appear somewhat technical. We attempt
a balance of abstraction with practicality here, being mindful of building
tools such that they may be wielded effectively in further research.

In \cref{sec:prelims}, we give some preliminaries and important definitions.
The reader familiar with mis\`ere should still find utility here as some ideas
are very recent.

In \cref{sec:tipping-points}, we build a generalisation of the tipping point
theory introduced in \cite{milley.renault:invertible}. In particular, we are
able to show that, if $\set{A}$ is a semigroup of games that is a subset of a
monoid satisfying various properties like being \emph{outcome-stable},
\emph{hereditary}, and \emph{integer-invertible} (to be defined later), then
the set of $\mathscr{P}$-free forms of $\set{A}$ (what we will call $\pf{A}$
later on) is closed under addition.

In \cref{sec:blocking}, we ground ourselves by showing how to apply the
technical results of \cref{sec:tipping-points} to the recently discovered
blocking universe $\set{B}$, and what implications this has for when the set of
$\mathscr{P}$-free forms of a monoid is a subgroup of the invertible subgroup
of the monoid. A game is \emph{blocking} (or \emph{blocked}) if, whenever Left
cannot move and Right opens up a move for her, then Left can move back to a
position from which they cannot move (and symmetrically for Right). Every
dead-ending form is trivially blocking, and it is easy to see that $\set{B}$ is
significantly larger than $\set{E}$, which was discussed in
\cite{davies.mckay.ea:pocancellation}.

Finally, in \cref{sec:final-remarks}, we suggest some irresistible directions
for further research---it is high time that we start to understand the group
structure of the invertible subgroups of various mis\`ere monoids and
universes.

\section{Preliminaries}
\label{sec:prelims}

We will briefly recall some important definitions here, but the reader is
directed to Siegel \cite{siegel:combinatorial} for a more thorough treatment
and more extensive background.

Recall that, given a set of games $\set{A}$, we define the restricted relation
`$\geq_\set{A}$' on all pairs of games $G,H\in\set{M}$ as follows:
\[
    G\geq_\set{A}H\iff\outcome(G+X)\geq\outcome(H+X)\quad\forall X\in\set{A}.
\]
Sometimes other authors do restrict $G$ and $H$ to being elements of $\set{A}$
themselves, but we do not do that here. Nor do we make any assumptions about
what properties $\set{A}$ might satisfy (like being additively closed).

Recall that a game $G$ is called \emph{Left $\set{U}$-strong} if
$\outcomeL(G+X)=\mathscr{L}$ for every Left end $X$ in $\set{U}$ (see
\cite[Definition 2.1 on p.~4]{davies.yadav:invertibility}), and similarly for
being \emph{Right $\set{U}$-strong}. This concept is critical for the
comparison of games modulo universes, which are sets of games with additive,
hereditary, conjugate, and option closure (the details of which are not
important to us here).

\begin{theorem}[{\cite[Theorem 4 on p.~103]{larsson.nowakowski.ea:absolute}}]
    \label{thm:comparison}
    If $\set{U}$ is a universe and $G,H\in\set{M}$, then $G\geq_\set{U}H$ if
    and only if $G$ and $H$ satisfy:
    \begin{enumerate}
        \item
            for every $G^R$, either there exists some $H^R$ with
            $G^R\geq_\set{U} H^R$, or else there exists some $G^{RL}$ with
            $G^{RL}\geq_\set{U}H$; and
        \item
            for every $H^L$, either there exists some $G^L$ with
            $G^L\geq_\set{U}H^L$, or else there exists some $H^{LR}$ with
            $G\geq_\set{U}H^{LR}$.
        \item
            if $H$ is a Left end, then $G$ is Left $\set{U}$-strong; and
        \item
            if $G$ is a Right end, then $H$ is Right $\set{U}$-strong.
    \end{enumerate}
\end{theorem}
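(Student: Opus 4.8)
The plan is to prove the biconditional by separating the four conditions into two rôles, exactly as in the normal-play comparison theorem: conditions (1) and (2) are \emph{maintenance} conditions, asserting that the relation $\geq_\set{U}$ survives a single move by either player, while conditions (3) and (4) are \emph{proviso} conditions, governing what happens once a component runs out of moves. Throughout I would use that $\outcome(G+X)\geq\outcome(H+X)$ is equivalent to the conjunction of the two one-sided statements ``if Left wins $H+X$ moving first then Left wins $G+X$ moving first'' and the analogue with Right moving first, so that the task reduces to transferring Left's winning strategies from $H+X$ to $G+X$. I would exploit throughout that a universe is closed under taking options and under addition, so that $X^L,X^R\in\set{U}$ and sums of test games remain in $\set{U}$.

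For \emph{sufficiency} (assuming (1)--(4), deduce $G\geq_\set{U}H$) I would fix $X\in\set{U}$ and argue by induction on the combined birthday of $G$, $H$, and $X$. Given a strategy witnessing that $H+X$ is good for Left, I would describe a Left strategy in $G+X$: whenever Right moves inside $X$ to $X^R\in\set{U}$, Left copies the inductively guaranteed response in $H+X^R$; whenever Right moves inside $G$ to $G^R$, I invoke condition (1), either matching it with a Right move $H\to H^R$ for which $G^R\geq_\set{U}H^R$, or letting Left reply $G^R\to G^{RL}$ with $G^{RL}\geq_\set{U}H$, and in each case the induction hypothesis continues the strategy. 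Condition (2) plays the symmetric rôle when it is Left who must move first. The delicate point is the base of the induction: when play on the $G$-or-$H$ side terminates, the maintenance conditions say nothing, and it is precisely here that the proviso is needed---if $H$ has become a Left end, then Left $\set{U}$-strength of $G$ guarantees $\outcomeL(G+X)=\mathscr{L}$, closing the gap, with (4) handling the conjugate situation.

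For \emph{necessity} (assuming $G\geq_\set{U}H$, deduce (1)--(4)) conditions (3) and (4) are almost immediate: if $H$ is a Left end then, for every Left end $X\in\set{U}$, the sum $H+X$ is again a Left end, so $\outcomeL(H+X)=\mathscr{L}$ under misère play; comparison forces $\outcomeL(G+X)=\mathscr{L}$ for all such $X$, which is exactly Left $\set{U}$-strength of $G$, and (4) is the conjugate statement. For conditions (1) and (2) I would argue by contraposition: if (1) failed at some $G^R$, then for \emph{every} candidate $H^R$ the relation $G^R\geq_\set{U}H^R$ would fail, and for \emph{every} Left reply $G^{RL}$ the relation $G^{RL}\geq_\set{U}H$ would fail, each failure certified by its own distinguishing game in $\set{U}$. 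The goal is to splice all of these certificates into a single $X\in\set{U}$ that lets Right, moving first in $G+X$, win by playing $G\to G^R$, while Left still wins $H+X$, contradicting $G\geq_\set{U}H$.

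This last aggregation is where I expect the real work to lie. Combining the individually distinguishing games into one universe element is not a mere formal sum of witnesses; one must design $X$ (using closure under addition together with suitable forcing ends in $\set{U}$) so that it simultaneously defeats every matching attempt $H^R$ and every Left escape $G^{RL}$, and so that the resulting position funnels Right into the component move $G\to G^R$ rather than into an irrelevant move inside $X$. Getting this single test game to encode all the failures at once while keeping it inside the universe is the main obstacle; by comparison, the strategy bookkeeping in the sufficiency direction and the proviso base cases are routine once the maintenance/proviso split is in place.
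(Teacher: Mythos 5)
First, a point of order: the paper you were given does \emph{not} prove this statement---it is imported verbatim from Larsson, Nowakowski, and Santos (their Theorem 4), so your attempt has to be measured against the proof in that source. Your overall architecture matches it: the maintenance/proviso split, the strategy-transfer induction for sufficiency, and the derivation of conditions (3) and (4) from $G\geq_\set{U}H$ (using that $H+X$ is a Left end whenever $H$ and $X$ are, so $\outcomeL(H+X)=\mathscr{L}$ under mis\`ere play) are all sound and are essentially the arguments in the source. Two small corrections to your sufficiency sketch: since conditions (1) and (2) hand you relations such as $G^R\geq_\set{U}H^R$ and $G^{RL}\geq_\set{U}H$, which are already quantified over \emph{all} of $\set{U}$, the induction can be run on the birthday of the test game $X$ alone---no ``combined birthday'' is needed, and there is no descent into $G$ and $H$; correspondingly, the proviso is invoked only when $H$ itself is a Left end (or $G$ itself a Right end), not at positions ``reached when play on the $G$-or-$H$ side terminates.''

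The genuine gap is exactly where you flag it, and it is not closable with the tools you name. To prove necessity of (1)--(2) you must combine the finitely many witnesses---one game $A_{H^R}\in\set{U}$ for each failed comparison $G^R\geq_\set{U}H^R$, and one $B_{G^{RL}}\in\set{U}$ for each failed $G^{RL}\geq_\set{U}H$---into a single $T\in\set{U}$ with $\outcome(G+T)\not\geq\outcome(H+T)$. Additive closure cannot do this: a sum of the witnesses presents them all at once and lets the opponent choose where to play, whereas you need the witnesses to sit at prescribed places in a game tree so that each is ``switched on'' only in reply to the particular move it punishes. Nor can ``forcing ends'' rescue the construction: a universe need not contain any non-zero ends at all (the dicot universe $\set{D}$ is the standard example). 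The property that makes the aggregation possible---and which your proposal never invokes---is the \emph{parental} closure of absolute universes: for any finite non-empty sets $\set{S},\set{T}\subseteq\set{U}$, the game $\{\set{S}\mid\set{T}\}$ again lies in $\set{U}$. This is one of the defining properties in the very definition of universe that the theorem quantifies over, and it is precisely what Larsson, Nowakowski, and Santos use: they build the distinguishing game (a ``downlink'' witness) whose Left and Right options are assembled from the $A_{H^R}$ and $B_{G^{RL}}$, so that Right's move $G\to G^R$ in $G+T$ wins while Left retains winning play in $H+T$. Without parentality there is no reason the local failures can be fused into one test game, so as written your necessity argument for the maintenance conditions does not go through.
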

The first two conditions in \cref{thm:comparison} are commonly referred to as
the \emph{maintenance property}, and the latter two the \emph{proviso}.

When we refer to a \emph{subposition} of a game $G$, we follow Siegel
\cite[Definition 1.3 on pp.9--10]{siegel:combinatorial} in meaning a game that
can be reached by any (possibly empty and not necessarily alternating) sequence
of moves on $G$, and reserve \emph{proper subposition} to mean those
subpositions of $G$ distinct from $G$ itself.

Recall also that the \emph{rank} of a game form is the height of its game tree
(i.e.\ the length of a longest run)---some authors might prefer \emph{formal
rank}, but we do not use that here, instead using rank to agree with
\emph{formal birthday} as used in normal play: $\birth(G)$.

Given the importance of Siegel's simplest forms (see \cite[\S5]{siegel:on}) in
the characterisation of mis\`ere invertibility (see
\cite[\S3]{davies.yadav:invertibility}), it seems wise to consider our results
in the wider context of augmented forms $\maug\supsetneq\set{M}$. These forms
introduce new equivalence classes, and so such a strengthening is not always
trivial, but we make an effort to do so in this work. The reader is encouraged
to read Siegel's original treatment; in particular, \cite[Definition 5.1 on
p.~212]{siegel:on} gives the definition of augmented forms. The motivations and
intricacies will not be fundamental to our arguments, so we do not go into
detail here.

The main thing that the reader need keep in mind is that an augmented form is
obtained from a form in $\set{M}$ by possibly adding an abstract symbol
`$\tomb$', called a \emph{tombstone}, to the Left or Right options (or both) of
any number of subpositions of the game. For example, $\{\tomb,0\mid*\}$ is an
augmented form. If a Left (Right) tombstone is present, then Left (Right) is
defined to win going first. So, even though Left loses $\{0\mid*\}$ going
first, she wins $\{\tomb,0\mid*\}$ going first on account of there being a Left
tombstone.

We also recall that a game is called \emph{Left end-like} if either it is a
Left end (i.e.\ Left has no options) or else it has a Left tombstone; and being
\emph{Right end-like} is defined analogously (see \cite[Definition 5.2 on
p.~212]{siegel:on}). For example, $\{\tomb,0\mid*\}$ is Left end-like but not
Right end-like, whereas $\{\cdot\mid1,\tomb\}$ is both Left and Right end-like.
It is an obvious remark (given \cite[Definition 5.4 on p.~213]{siegel:on}) that
a sum of games $G+H$ is Left (Right) end-like if and only if $G$ and $H$ are
both Left (Right) end-like, which we will use later.

From this point onwards, we will generally use the term `game' rather than
`augmented game'. It will be clear in context when we are using arguments
specific to tombstones.

Recall that a set of games $\set{A}$ is called \emph{hereditary} if, for every
$G\in\set{A}$, we have $G'\in\set{A}$ for all options $G'$ of $G$. Hereditary
semigroups\footnote{
    Although some authors allow a semigroup to be empty, we do not do so here.
}
of games must in fact be monoids, because they must contain the strict form of
0. As such, in what follows, we will write `hereditary monoid' in place of
`hereditary semigroup'.

Milley and Renault called a dead-ending game \emph{$\mathscr{P}$-free} if, in
reduced form, it had no subposition of outcome $\mathscr{P}$ \cite[Definition 1
on p.~2]{milley.renault:invertible}. We split this idea into two definitions:
one defined on the strict form; and the other on the equivalence class. Each
will have its own use, and we will maintain compatibility with the previous
definition in this way.

\begin{definition}[cf.\ {\cite[Definition 1 on
    p.~2]{milley.renault:invertible}}]
    We say a game is \emph{(strictly) $\mathscr{P}$-free} if no subposition has
    outcome $\mathscr{P}$. If $\set{A}$ is a set of games, then we write
    $\pf{A}$ for the set of (strictly) $\mathscr{P}$-free elements of
    $\set{A}$.
\end{definition}

When discussing strictly $\mathscr{P}$-free games, we will usually drop
`strictly' except in cases where we want to emphasise the distinction between
this and \cref{def:p-free-equiv} below. Indeed, it is important to clarify that
we are defining the property of being strictly $\mathscr{P}$-free on strict
game forms, rather than on equivalence classes (the latter of which was done in
\cite{milley.renault:invertible}). It is easy to find two games, one of which
is $\mathscr{P}$-free, and one of which is not, that are equal to each other:
for example, consider $\{\{0,\overline{1}\mid0\}\mid\cdot\}$ and
$\{\{0,\overline{1}\mid0\},*\mid\cdot\}$, which are equal modulo \emph{every}
set of games (choose any you like), yet the former is strictly
$\mathscr{P}$-free, whereas the latter is not. This motivates the following
definition, which captures Milley and Renault's original notion, being
well-defined up to equivalence (modulo $\set{A}$) rather than isomorphism.

\begin{definition}[cf.\ {\cite[Definition 1 on
    p.~2]{milley.renault:invertible}}]
    \label{def:p-free-equiv}
    If $\set{A}$ is a set of games, then we say a game $G$ is
    \emph{$\mathscr{P}$-free modulo $\set{A}$} if there exists a strictly
    $\mathscr{P}$-free form $H\in\set{A}$ such that $G\equiv_\set{A}H$. We
    write $\pf{A}[S]$ for the elements of $\set{A}$ that are $\mathscr{P}$-free
    modulo some set $\set{S}$.
\end{definition}

In this paper, we will only have cause to consider $\pf{A}[A]$ (i.e.\ with
$\set{A}=\set{S}$ in \cref{def:p-free-equiv}), but one could easily imagine
that the more general definition would be useful on occasion.

Note that an augmented form with a tombstone will never have outcome
$\mathscr{P}$, but of course such forms are not always $\mathscr{P}$-free:
take, for example, our game $\{\tomb,0\mid*\}$ from earlier, since it has $*$
as a subposition (which has outcome $\mathscr{P}$). But a form like
$\{\tomb,0\mid\cdot\}$ is indeed $\mathscr{P}$-free.

Milley and Renault used the machinery similar to that of the following section
to show that the set of $\mathscr{P}$-free dead-ending games is closed under
addition \cite[Theorem 16 on p.~10]{milley.renault:invertible}; albeit with
different terminology, they showed that both $\pf{E}$ and $\pf{E}[E]$ are
monoids (it is trivial that 0 is an identity element in both semigroups). An
optimist would ask whether it were possible that the set of \emph{all}
$\mathscr{P}$-free games (of $\set{M}$) could be closed under addition. In
typical mis\`ere fashion, the answer is, of course, no. To see this, consider
the sum
\[
    \{\cdot\mid\{1\mid1\}\}+\{\overline{1}\mid\overline{1}\}.
\]
One can easily verify that $\{\cdot\mid\{1\mid1\}\}$ and
$\{\overline{1}\mid\overline{1}\}$ are both $\mathscr{P}$-free forms, but that
their sum has outcome $\mathscr{P}$---and hence is certainly not
$\mathscr{P}$-free! It is straightforward to show that there exists no
counter-example $G+H$ of formal birthday (strictly) less than 5, and so the one
we have presented here is minimal.

Note it is also easy to find counter-examples of the form $G+\overline{G}$
(which are symmetric forms; i.e.\ they are isomorphic to their conjugates),
where $G$ (and hence also $\overline{G}$) is $\mathscr{P}$-free. Take, for
example, $G=\{\cdot\mid\{1\mid0,1\}\}$.

Those familiar with mis\`ere may recall the striking theorem of Mesdal and
Ottaway \cite[Theorem 3 on p.4]{mesdal.ottaway:simplification} that says if
$o_1$, $o_2$, and $o_3$ are outcomes (i.e.\ any of $\mathscr{L}$,
$\mathscr{N}$, $\mathscr{P}$, and $\mathscr{R}$), then there exist games
$G,H\in\set{M}$ such that
\begin{align*}
	\outcome(G)&=o_1,\\
	\outcome(H)&=o_2,\text{ and}\\
	\outcome(G+H)&=o_3.
\end{align*}
It would be tempting to say that this result should have already prevented us
from even asking our question about whether the set of all $\mathscr{P}$-free
forms is closed under addition, by setting, for example,
$o_1,o_2\neq\mathscr{P}$ and $o_3=\mathscr{P}$. But it is not so simple, since
there is no guarantee made by the theorem that $G$ and $H$ can be chosen to be
$\mathscr{P}$-free. Indeed, in all of the relevant examples $G+H$ that Mesdal
and Ottaway give in \cite[Appendix on
pp.~11--12]{mesdal.ottaway:simplification}, none use $G$ and $H$ both
$\mathscr{P}$-free.

We will show (in \cref{thm:tipping-points}) that for $\mathscr{P}$-free games
of a particular quality, the sequence of outcomes of $G+n$ (where $n$ varies
over all integers) splits up into exactly three contiguous components: an
infinite sequence of $\mathscr{L}$, followed by a finite sequence of
$\mathscr{N}$, followed by an infinite sequence of $\mathscr{R}$. (The
boundaries of these components---the integers where the outcome \emph{tips
over} from one to another---are the tipping points that we referred to
earlier.) We will use these particular $\mathscr{P}$-free games to start
pointing towards the structure of the invertible subgroups of various mis\`ere
monoids.

\section{Tipping point theory}
\label{sec:tipping-points}

Milley and Renault's ideas of tipping points come from the observation that,
given \emph{any} game $G$, if we add a positive integer $n$ of sufficiently
large rank, then the outcome of $G+n$ must be $\mathscr{R}$. So, there must
exist a non-negative integer $\rtp(G)$ of minimal rank such that the outcome of
$G+n$ is $\mathscr{R}$ for all $n\geq\rtp(G)$. This $\rtp(G)$ is called the
\emph{$\mathscr{R}$-tipping point of $G$}, and it is clear by symmetry that we
can define the $\mathscr{L}$-tipping point in a similar way. It turns out that
it also makes sense to define an $\mathscr{N}$-tipping point, although a
$\mathscr{P}$-tipping point would \emph{not} be well-defined in general. We
will now give formal definitions and show that they are well-defined.

As we mentioned earlier, our treatment here roughly follows the shape of the
treatment by Milley and Renault, and we will give reference citations to the
similar results at each appropriate step.

\subsection{Tipping point basics}
\label{sec:tp-basics}

\begin{definition}[cf.\ {\cite[Definition 5 on
    p.~6]{milley.renault:invertible}}]
    \label{def:tipping-points}
    If $G$ is a game, then we say the \emph{$\mathscr{N}$-tipping point} of $G$
    is the smallest non-negative integer $\ntp(G)$ such that
    \begin{align*}
        \outcome(G+\ntp(G))&=\mathscr{N},\text{ or}\\
        \outcome(G+\overline{\ntp(G)})&=\mathscr{N};
    \end{align*}
    the \emph{$\mathscr{R}$-tipping point} of $G$ is the smallest non-negative
    integer $\rtp(G)$ such that
    \[
        \outcome(G+\rtp(G))=\mathscr{R};
    \]
    and the \emph{$\mathscr{L}$-tipping point} of $G$ is the smallest
    non-negative integer $\ltp(G)$ such that
    \[
        \outcome(G+\overline{\ltp(G)})=\mathscr{L}.
    \]
\end{definition}

As an example for why the concept of a $\mathscr{P}$-tipping point is not
always well-defined, consider that no sum of integers has outcome
$\mathscr{P}$, and hence no integer could have a $\mathscr{P}$-tipping point.
Investigating variations of $\mathscr{P}$-tipping points (by adding things
other than integers), such as the Right-$r$-commission in
\cite{davies.mckay.ea:pocancellation}, might be worth pursuing further. It may
also be worthwhile to investigate when the $\mathscr{P}$-tipping point
\emph{would} exist, and how it behaves. But such things will not be of use to
us here.

Although the $\mathscr{L}$-, $\mathscr{N}$-, and $\mathscr{R}$-tipping points
have been defined on strict forms, it is a simple observation that, if
$\set{A}$ is a semigroup of games containing $1$ and $\overline{1}$, then the
tipping points are well-defined up to equivalence modulo $\set{A}$. (In fact,
$\set{A}$ need not be additively closed, as containing every integer would be
enough.)

Note that we will use negative signs \emph{exclusively} for when we are
interpreting integers as elements of $\mathbb{Z}$, and overlines for when we
wish to refer to the conjugate of an integer interpreted as a game. For
example, if we have a game $G$, then $G+(3-2)$ should be interpreted as the
game plus the integer $1$, where $3-2$ should be treated as an expression in
$\mathbb{Z}$. It would be \emph{incorrect} to interpret this as
$G+3+\overline{2}$! Similarly, (game) integers are generally incomparable in
mis\`ere (e.g.\ we do \emph{not} have $1\leq2$), and as such it will generally
be clear that, when we are comparing integers, we are treating them as elements
of $\mathbb{Z}$. We will be passing between $\mathbb{Z}$ and (game) integers
freely for the duration of this study and without any further mention.

We now show that the $\mathscr{L}$-, $\mathscr{N}$-, and $\mathscr{R}$-tipping
points do indeed always exist.

\begin{theorem}[cf.\ {\cite[pp.~5--6]{milley.renault:invertible}}]
    \label{tipping-point-existence}
    If $G$ is a game, then $\ltp(G)$, $\rtp(G)$, and $\ntp(G)$ all exist.
\end{theorem}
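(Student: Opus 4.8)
The plan is to treat the three tipping points separately, the $\mathscr{N}$-tipping point being the only real difficulty; throughout I write $\outcomeL(X)\in\{\mathscr{L},\mathscr{R}\}$ for the winner of $X$ when Left moves first and $\outcomeR(X)$ for the winner when Right moves first, so that $\outcome(X)=\mathscr{N}$ exactly when $\outcomeL(X)=\mathscr{L}$ and $\outcomeR(X)=\mathscr{R}$, while $\outcome(X)=\mathscr{P}$ exactly when $\outcomeL(X)=\mathscr{R}$ and $\outcomeR(X)=\mathscr{L}$. Each tipping point is a minimum over a set of non-negative integers, so it suffices to show each such set is nonempty; the least element then exists by well-ordering. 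For $\rtp$, the first step is the folklore fact that $\outcome(G+n)=\mathscr{R}$ once $n>\birth(G)+1$, which I would prove by a direct strategy-and-counting argument: let Right always move in the $G$-component when able (Right can never move in the integer). Any line of play makes at most $\birth(G)$ moves in the $G$-component in total, so if the game did \emph{not} end on one of Right's turns (with Right stuck at a Right end, hence winning), then Right was never stuck, the number of Right-turns is at most $\birth(G)$, and by alternation the number of Left-turns is at most $\birth(G)+1$; but emptying the integer costs Left $n$ turns, forcing $n\le\birth(G)+1$. Hence for larger $n$ the game must end on a Right turn whoever starts, giving $\outcome(G+n)=\mathscr{R}$ and the nonemptiness needed for $\rtp(G)$.

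The $\mathscr{L}$-tipping point then follows by conjugation: applying the above to $\overline{G}$, using $\overline{\overline{G}+n}=G+\overline{n}$ and the fact that conjugation swaps $\mathscr{L}$ with $\mathscr{R}$ (and fixes $\mathscr{N}$ and $\mathscr{P}$), yields $\outcome(G+\overline{n})=\mathscr{L}$ for all large $n$, so $\ltp(G)$ exists. I expect the $\mathscr{N}$-tipping point to be the main obstacle, because for arbitrary $G$ the outcomes of $G+k$ need not be monotone in $k$ (the game $*$ already produces a block $\mathscr{N},\mathscr{P},\mathscr{N}$ in the middle), so no naive intermediate-value argument is available. The device I would use is a pair of \emph{waiting-move} implications. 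If $k\ge 0$, then in $G+(k+1)$ with Left to move the integer is positive and Left may decrement it to reach $G+k$ with Right to move; hence $\outcomeR(G+k)=\mathscr{L}$ forces $\outcomeL(G+(k+1))=\mathscr{L}$. Conjugating, for $k\le 0$ we get that $\outcomeL(G+k)=\mathscr{R}$ forces $\outcomeR(G+(k-1))=\mathscr{R}$.

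To finish, let $p$ be the largest integer with $\outcomeR(G+p)=\mathscr{L}$ and $s$ the smallest integer with $\outcomeL(G+s)=\mathscr{R}$; both exist by the $\rtp$ and $\ltp$ work. If $p\ge 0$, the first implication at $k=p$ gives $\outcomeL(G+(p+1))=\mathscr{L}$, while $\outcomeR(G+(p+1))=\mathscr{R}$ since $p+1>p$, so $\outcome(G+(p+1))=\mathscr{N}$. If instead $s\le 0$, the conjugate implication at $k=s$ gives $\outcome(G+(s-1))=\mathscr{N}$. The only remaining case is $p\le-1$ and $s\ge 1$, which forces $\outcomeR(G)=\mathscr{R}$ and $\outcomeL(G)=\mathscr{L}$, i.e.\ $\outcome(G)=\mathscr{N}$ outright. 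In every case some $G+k$ has outcome $\mathscr{N}$; reading $k$ as $+m$ or as $\overline{m}$ according to its sign shows the defining set for $\ntp(G)$ is nonempty, completing the proof. The point that will demand the most care is exactly this sign bookkeeping: each waiting-move implication is only valid on one side of zero (the relevant decrement must actually be a legal move for the correct player), which is why the argument naturally splits into the three cases above rather than collapsing into a single monotone sweep.
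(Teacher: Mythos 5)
Your proof is correct, and it differs from the paper's in two worthwhile ways. First, for $\ltp$ and $\rtp$, the paper simply asserts $\outcome(G+\birth(G)+1)=\mathscr{R}$ (plus its conjugate) and invokes well-ordering; you actually prove the folklore fact with the strategy-and-counting argument. One caveat there: since the paper's ``games'' are augmented forms in $\maug$, your phrase ``the game ends with a player stuck'' should be read as ``the position is end-like for the player to move''; this costs nothing, because a sum is Left end-like only when every component is, so Left can only win after exhausting the integer, and your count $n\le\birth(G)+1$ survives intact (a Right tombstone in $G$ only helps Right). Second, and more substantively, your treatment of $\ntp$ is organised differently. Both arguments turn on the same waiting-move mechanism (decrementing a positive integer to pass the turn, and its conjugate), but the paper runs a four-way case analysis on $\outcome(G)$: the $\mathscr{L}$ case anchors at the minimal index $\rtp(G)$ and shows $\outcome(G+(\rtp(G)-1))=\mathscr{N}$ via exactly your first implication (in contrapositive form) plus minimality of $\rtp(G)$; the $\mathscr{R}$ case is by symmetry; and the $\mathscr{P}$ case needs a two-step reduction, first passing to $G+1$ and then re-running the $\mathscr{L}$-case argument on that game. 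You instead anchor at the extremal half-outcome indices $p$ (last $k$ with $\outcomeR(G+k)=\mathscr{L}$) and $s$ (first $k$ with $\outcomeL(G+k)=\mathscr{R}$), whose existence your $\rtp$/$\ltp$ work legitimately supplies, and read off an $\mathscr{N}$-position at $p+1$, at $s-1$, or at $0$. This handles all four outcomes of $G$ uniformly and absorbs the paper's fiddly $\mathscr{P}$ case into the single observation that $p\le-1$ and $s\ge1$ force $\outcome(G)=\mathscr{N}$. What the paper's version buys is that its witness is expressed directly in terms of the tipping point $\rtp(G)$ being defined, which matches how these quantities are manipulated in \cref{thm:tipping-points} and later lemmas; what yours buys is a cleaner, self-contained existence proof with no case analysis on $\outcome(G)$ and careful, correct sign bookkeeping on where each waiting move is legal.
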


\begin{proof}
    Observe that $\outcome(G+\birth(G)+1)=\mathscr{R}$, and
    $\outcome(G+\overline{\birth(G)+1})=\mathscr{L}$ by symmetry. Thus, by the
    well-ordering principle, it is clear that the $\mathscr{L}$- and
    $\mathscr{R}$-tipping points always exist.

    We now consider the $\mathscr{N}$-tipping point. If
    $\outcome(G)=\mathscr{N}$, then we are done: $\ntp(G)=0$.

    Assume now that $\outcome(G)=\mathscr{L}$. We know that
    $\outcome(G+\rtp(G))=\mathscr{R}$ (and $\rtp(G)\geq1$). Now, if
    $\outcomeR(G+(\rtp(G)-1))=\mathscr{L}$, then
    $\outcomeL(G+\rtp(G))=\mathscr{L}$, which is not the case, and hence
    $\outcome(G+(\rtp(G)-1))=\mathscr{N}$. The result is then immediate from
    the well-ordering principle.

    If $\outcome(G)=\mathscr{R}$, then it is clear by symmetry that
    $\ntp(G)=\ntp(\overline{G})$, which we have already shown is well-defined.

    Finally, suppose $\outcome(G)=\mathscr{P}$; then
    $\outcome(G+1)\geq\mathscr{N}$ and
    $\outcome(G+\overline{1})\leq\mathscr{N}$. If either of these are an
    equality, then we are done: $\ntp(G)$ would simply be 1. Otherwise,
    $\outcome(G+1)=\mathscr{L}$, and there must exist some (positive) integer
    $k$ such that $\outcome(G+1+k)=\mathscr{N}$ by previous arguments, and
    hence $\ntp(G)$ exists by the well-ordering principle.
\end{proof}

If we were to consider monoids containing integers in which
$1+\overline{1}\equiv0$ (modulo the monoid)\footnote{We will later refer to
this as an \emph{integer-invertible} monoid (cf.\ \cref{def:integral}).}, and
$G$ were a game that did not have outcome $\mathscr{N}$, then
$\ntp(G)=1+\min(\ntp(G+1),\ntp(G+\overline{1}))$. In general, however, this
formula does not hold: take, for example, the game
$G=\{\cdot\mid\{\cdot\mid\{\{1\mid\overline{1}\}\mid\cdot\}\}\}$, which
satisfies $\outcome(G)=\outcome(G+1)=\outcome(G+\overline{1})=\mathscr{L}$, and
$\outcome(G+1+\overline{1})=\mathscr{N}$. The formula fails as
$\ntp(G+\overline{1})=1$, and so we would predict $\ntp(G)=2$, while in fact
$\ntp(G)=3$. Note $G$ is not $\mathscr{P}$-free.

\begin{lemma}[cf.\ {\cite[Lemma 6 on p.~6]{milley.renault:invertible}}]
    \label{lem:p-free-plus-integer}
    If $G$ is $\mathscr{P}$-free and $n$ is an integer, then $G+n$ is also
    $\mathscr{P}$-free.
\end{lemma}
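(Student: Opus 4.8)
The plan is to reduce the statement to a claim about outcomes and then prove a strengthened version by induction on rank. First I would note that being $\mathscr{P}$-free is hereditary, so every subposition of a $\mathscr{P}$-free game is again $\mathscr{P}$-free. Since every subposition of $G+n$ has the form $G'+m$, where $G'$ is a subposition of $G$ and $m$ is a subposition of $n$, and since conjugation lets me assume $n\geq 0$ (so that $m$ ranges over $0,1,\dots,n$), it suffices to prove the following: for every $\mathscr{P}$-free game $H$ and every integer $m\geq 0$, we have $\outcome(H+m)\neq\mathscr{P}$. Applying this with each such $G'$ in place of $H$ then shows that no subposition of $G+n$ has outcome $\mathscr{P}$.

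The difficulty is that this outcome claim does not appear to propagate through an induction on its own, so I would strengthen it. Alongside $\outcome(H+m)\neq\mathscr{P}$, I would simultaneously carry a \emph{monotonicity} statement capturing the intuition that, under mis\`ere play, increasing a non-negative integer can only ever help Right: namely, that $\outcomeR(H+m)=\mathscr{R}$ implies $\outcomeR(H+(m+1))=\mathscr{R}$. The plan is to prove both statements together, for all $m\geq 0$, by induction on the rank of $H$, using the hypotheses freely for the $\mathscr{P}$-free options of $H$ (which have strictly smaller rank), and noting the base case is immediate.

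For the outcome statement I would argue by contradiction, assuming $\outcome(H+m)=\mathscr{P}$ with $m\geq 1$ (the case $m=0$ being trivial). Since Left loses moving first, her move $m\to m-1$ must lose, giving $\outcomeR(H+(m-1))=\mathscr{R}$; as $m-1\geq 0$ the integer offers Right no move, so Right's winning response lies in $H$, producing a Right option $K=H^R$ with $\outcomeL(K+(m-1))=\mathscr{R}$. Because Right also loses moving first in $H+m$, every Right option---in particular $K$---satisfies $\outcomeL(K+m)=\mathscr{L}$. I would then examine how Left wins $K+m$ moving first. If she wins by playing $m\to m-1$, then $\outcomeR(K+(m-1))=\mathscr{L}$, whence $\outcome(K+(m-1))=\mathscr{P}$, contradicting the induction hypothesis for the smaller-rank game $K$. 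Otherwise she wins by a move in $K$, yielding an option $K^L$ with $\outcomeR(K^L+m)=\mathscr{L}$; but $\outcomeL(K+(m-1))=\mathscr{R}$ forces $\outcomeR(K^L+(m-1))=\mathscr{R}$, and the monotonicity hypothesis applied to $K^L$ then forces $\outcomeR(K^L+m)=\mathscr{R}$, a contradiction. The monotonicity statement for $H$ itself I would establish by a closely parallel argument: assuming $\outcomeR(H+m)=\mathscr{R}$ but $\outcomeR(H+(m+1))=\mathscr{L}$, I extract a Right option $K$ with $\outcomeL(K+m)=\mathscr{R}$ and $\outcomeL(K+(m+1))=\mathscr{L}$, and split on how Left wins $K+(m+1)$; one branch yields $\outcome(K+m)=\mathscr{P}$, contradicting the outcome hypothesis for $K$, and the other contradicts the monotonicity hypothesis for a deeper option.

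The main obstacle is identifying the correct strengthening: the bare statement $\outcome(H+m)\neq\mathscr{P}$ stalls precisely in the branch where Left wins by a move inside the game rather than in the integer, and it is there that the monotonicity hypothesis is indispensable. A secondary point requiring care is confining the entire argument to non-negative integers, so that the integer summand never supplies a move for Right; this is what keeps the induction self-contained and avoids having to carry the conjugate, Left-side monotonicity as well.
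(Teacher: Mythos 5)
Your proposal is correct, but it follows a genuinely different route from the paper's proof. The paper reduces (by induction and conjugation) to showing $\outcome(G+1)\neq\mathscr{P}$ and then argues globally, by a strategy-stealing argument along a line of play: if Left won $G+1$ playing second, her winning strategy would at some point have to play the integer move $1\to0$ from a position $G'+1$, giving $\outcomeR(G')=\mathscr{L}$; $\mathscr{P}$-freeness upgrades this to $\outcome(G')=\mathscr{L}$, transplanting the same strategy to $G$ alone gives $\outcomeR(G)=\mathscr{L}$, and then Left would also win $G+1$ moving \emph{first} by playing to $G$, so the outcome cannot be $\mathscr{P}$. You instead argue locally, by induction on rank with a strengthened hypothesis: you carry the auxiliary invariant that, for $\mathscr{P}$-free $H$ and $m\geq0$, $\outcomeR(H+m)=\mathscr{R}$ implies $\outcomeR(H+(m+1))=\mathscr{R}$, and every contradiction is extracted from options rather than from strategies. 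Your case analyses check out: in each branch (ii), the input to the monotonicity hypothesis is legitimate because if $\outcomeL(X)=\mathscr{R}$ then every Left option $X^L$ satisfies $\outcomeR(X^L)=\mathscr{R}$. One implicit step deserves a sentence: when you extract ``Right's winning response in $H$'' you need $H$ not to be a Right end (otherwise $\outcomeR(H+(m-1))=\mathscr{R}$ holds vacuously and no option $K$ exists); this does follow, since $\outcome(H+m)=\mathscr{P}$ (resp.\ $\outcomeR(H+(m+1))=\mathscr{L}$) forces Right to have a move at all---a player who cannot move wins under mis\`ere play---but you should say so. As for what each approach buys: the paper's argument is shorter and needs no auxiliary invariant, but it reasons about entire strategies and is specific to adding $1$; yours is purely positional, treats all $m\geq0$ at once, and yields a by-product of independent interest, namely that the one-sided $\outcomeR$-monotonicity in a non-negative integer summand holds for \emph{every} $\mathscr{P}$-free game of $\set{M}$, with no outcome-stability assumption---a one-sided shadow of \cref{thm:tipping-points} valid outside outcome-stable semigroups.
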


\begin{proof}
    Without loss of generality, and by induction, it suffices to prove that
    $G+1$ is $\mathscr{P}$-free. We first show that $G+1$ cannot have outcome
    $\mathscr{P}$.

    If Left wins $G+1$ playing second, then at some point Left makes a winning
    move from $G'+1$ to $G'$, for a subposition $G'$ (of $G$) obtained by
    alternating play in $G$, where $G'$ has outcome $\mathscr{L}$ or
    $\mathscr{P}$. Since it is a subposition of $G$, and $G$ is
    $\mathscr{P}$-free, we must have $\outcome(G')=\mathscr{L}$. But then the
    same strategy played on $G$ alone means Left wins $G$ playing second, and
    so Left wins $G+1$ playing first (by playing to $G$). Hence,
    $\outcome(G+1)\neq\mathscr{P}$.

    Now, all subpositions of $G+1$ are of the form $G'+1$ and $G'$, where $G'$
    is a subposition of $G$ that is necessarily $\mathscr{P}$-free. It is then
    clear by induction that each such proper subposition of $G+1$ is
    $\mathscr{P}$-free (note that the game $1$ is $\mathscr{P}$-free), and thus
    we conclude that $G+1$ is $\mathscr{P}$-free.
\end{proof}

We mentioned earlier that no (sums of) integers have outcome $\mathscr{P}$ and
that this would imply immediately that every integer is $\mathscr{P}$-free.
\cref{lem:p-free-plus-integer} also gives us a one-line proof of this fact upon
observing that 0 is $\mathscr{P}$-free.

\begin{definition}[cf.\ {\cite[Theorem 4 on p.~4]{milley.renault:invertible}}]
    \label{def:outcome-stable}
    We say a set of games $\set{A}$ is \emph{outcome-stable} if the following
    are true for all $G,H\in\pf{A}$:
    \begin{enumerate}
        \item
            if $\outcome(G),\outcome(H)=\mathscr{L}$, then
            $\outcome(G+H)=\mathscr{L}$;
        \item
            if $\outcome(G),\outcome(H)=\mathscr{R}$, then
            $\outcome(G+H)=\mathscr{R}$;
        \item
            if $\outcome(G)=\mathscr{L}$ and $\outcome(H)=\mathscr{N}$, then
            $\outcomeL(G+H)=\mathscr{L}$; and
        \item
            if $\outcome(G)=\mathscr{R}$ and $\outcome(H)=\mathscr{N}$, then
            $\outcomeR(G+H)=\mathscr{R}$.
    \end{enumerate}
\end{definition}

Milley and Renault showed that the dead-ending universe $\set{E}$ is
outcome-stable \cite[Theorem 4 on p.~4]{milley.renault:invertible}, which is
how this definition came to be. We will show in \cref{sec:blocking}
(specifically \cref{lem:B-outcome-stable}) that the universe of blocking games
$\set{B}$ is also outcome-stable (and recall that this is a significantly
larger universe than $\set{E}$), which would in fact imply the result of Milley
and Renault since a subset of an outcome-stable set must itself be
outcome-stable. Also observe that, for a conjugate-closed set of games,
conditions (1) and (2) of \cref{def:outcome-stable} are equivalent, as well as
(3) and (4).

For $\mathscr{P}$-free games in an outcome-stable set, note that the
relationship between the outcomes of the summands ($G$ and $H$) and the outcome
of the sum ($G+H$) is precisely the same relationship that one enjoys when
working in normal play: adding two games of outcome $\mathscr{L}$ results in a
game of outcome $\mathscr{L}$, et cetera.

Recall from earlier our sum of $\mathscr{P}$-free games
$\{\cdot\mid\{1\mid1\}\}+\{\overline{1}\mid\overline{1}\}$ that had outcome
$\mathscr{P}$. Notice that $\outcome(\{\cdot\mid\{1\mid1\}\})=\mathscr{N}$ and
$\outcome(\{\overline{1}\mid\overline{1}\})=\mathscr{L}$. As such, if $\set{A}$
is a set of games containing both of these forms, then $\set{A}$ is \emph{not}
outcome-stable. In particular, we observe that $\set{M}$ (and hence also
$\maug$) is not outcome-stable. It would therefore be interesting to find sets
of games that are maximal with respect to being outcome-stable, which we leave
as an open problem.

\begin{problem}
    What is an example of a set of $\mathscr{P}$-free games that is maximal
    with respect to being outcome-stable?
\end{problem}

The $\mathscr{P}$-free forms of outcome-stable semigroups of games $\set{A}$
that contain 1 and $\overline{1}$ exhibit a wonderful structure in their
tipping-points: the three contiguous components of outcomes that we spoke of
earlier. See \cref{fig:contiguous} for a visualisation.

\begin{figure}
\centering

\def\a{-7}
\def\b{1.2}
\def\c{3.5}
\def\d{7}
\begin{tikzpicture}[scale=0.8]
\node at (\a+1,1.5) {\large{$\textrm{o}(G)=\mathscr{L}$:}};
\node at (0,0.6) {$0$};
\node at (\b,0.6) {$\ntp(G)$};
\node at (\c,0.6) {$\rtp(G)$};
\node at (\d+0.3,0.05) {$k$};
\draw[very thick,blue,<-] (\a,0)--(\b,0);
\draw[very thick,green] (\b+.02,0)--(\c,0);
\draw[very thick,red,->] (\c+.02,0)--(\d,0);
\node at (0,0) {$|$};
\node at (\b-0.03,0) {\bf{)}};
\node at (\b+.03,0) {\bf{[}};
\node at (\c-.02,0) {\bf{)}};
\node at (\c+.02,0) {\bf{[}};
\draw [thick,decorate,decoration={brace,amplitude=10pt,mirror,raise=-0.5cm}]
    (\a,-1)--(\b-.02,-1)
    node[midway,yshift=-5pt]{$\scalebox{0.75}{$\textrm{o}(G+k)=\mathscr{L}$}$};
\draw [thick,decoration={brace,amplitude=10pt,mirror,raise=-0.5cm},
    decorate] (\b+.02,-1)--(\c-.02,-1)
    node[midway,yshift=-5pt]{$\scalebox{0.75}{$\textrm{o}(G+k)=\mathscr{N}$}$};
\draw [thick,decoration={brace,amplitude=10pt,mirror,raise=-0.5cm},
    decorate] (\c+.02,-1)--(\d,-1)
    node[midway,yshift=-5pt]{$\scalebox{0.75}{$\textrm{o}(G+k)=\mathscr{R}$}$};
\end{tikzpicture}

\vspace{0.5cm}

\def\a{-7}
\def\b{-1.5}
\def\c{2}
\def\d{7}
\begin{tikzpicture}[scale=0.8]
\node at (\a+1,1.5) {\large{$\textrm{o}(G)=\mathscr{N}$:}};
\node at (0,0.6) {$0$};
\node at (\b,0.6) {$\ltp(G)$};
\node at (\c,0.6) {$\rtp(G)$};
\node at (\d+0.3,0.05) {$k$};
\draw[very thick,blue,<-] (\a,0)--(\b,0);
\draw[very thick,green] (\b+.02,0)--(\c,0);
\draw[very thick,red,->] (\c+.02,0)--(\d,0);
\node at (0,0) {$|$};
\node  at (\b-0.03,0) {\bf{]}};
\node at (\b+.03,0) {\bf{(}};
\node at (\c-.02,0) {\bf{)}};
\node at (\c+.02,0) {\bf{[}};
\draw [thick,decoration={brace,amplitude=10pt,mirror,raise=-0.5cm},
    decorate] (\a,-1)--(\b-.02,-1)
    node[midway,yshift=-5pt]{$\scalebox{0.75}{$\textrm{o}(G+k)=\mathscr{L}$}$};
\draw [thick,decoration={brace,amplitude=10pt,mirror,raise=-0.5cm},
    decorate] (\b+.02,-1)--(\c-.02,-1)
    node[midway,yshift=-5pt]{$\scalebox{0.75}{$\textrm{o}(G+k)=\mathscr{N}$}$};
\draw [thick,decoration={brace,amplitude=10pt,mirror,raise=-0.5cm},
    decorate] (\c+.02,-1)--(\d,-1)
    node[midway,yshift=-5pt]{$\scalebox{0.75}{$\textrm{o}(G+k)=\mathscr{R}$}$};
\end{tikzpicture}

\vspace{0.5cm}

\def\a{-7}
\def\b{-3.5}
\def\c{-1.2}
\def\d{7}
\begin{tikzpicture}[scale=0.8]
\node at (\a+1,1.5) {\large{$\textrm{o}(G)=\mathscr{R}$:}};
\node at (0,0.6) {$0$};
\node at (\b,0.6) {$\ltp(G)$};
\node at (\c,0.6) {$\ntp(G)$};
\node at (\d+0.3,0.05) {$k$};
\draw[very thick,blue,<-] (\a,0)--(\b,0);
\draw[very thick,green] (\b+.02,0)--(\c,0);
\draw[very thick,red,->] (\c,0)--(\d,0);
\node  at (0,0) {$|$};
\node at (\b-0.03,0) {\bf{]}};
\node at (\b+.03,0) {\bf{(}};
\node at (\c-.02,0) {\bf{]}};
\node at (\c+.02,0) {\bf{(}};
\draw [thick,decoration={brace,amplitude=10pt,mirror,raise=-0.5cm},
    decorate] (\a,-1)--(\b-.02,-1)
    node[midway,yshift=-5pt]{$\scalebox{0.75}{$\textrm{o}(G+k)=\mathscr{L}$}$};
\draw [thick,decoration={brace,amplitude=10pt,mirror,raise=-0.5cm},
    decorate] (\b+.02,-1)--(\c-.02,-1)
    node[midway,yshift=-5pt]{$\scalebox{0.75}{$\textrm{o}(G+k)=\mathscr{N}$}$};
\draw [thick,decoration={brace,amplitude=10pt,mirror,raise=-0.5cm},
    decorate] (\c+.02,-1)--(\d,-1)
    node[midway,yshift=-5pt]{$\scalebox{0.75}{$\textrm{o}(G+k)=\mathscr{R}$}$};
\end{tikzpicture}

\caption{A visualisation of \cref{thm:tipping-points}, showing the contiguous
components of the sequence of outcomes $\outcome(G+k)$.}
\label{fig:contiguous}

\end{figure}

\begin{theorem}[cf.\ {\cite[pp.~5--6]{milley.renault:invertible}}]
    \label{thm:tipping-points}
    If $\set{A}$ is an outcome-stable semigroup containing $1$ and
    $\overline{1}$, and $G\in\pf{A}[A]$, then the sequence of outcomes
    $\outcome(G+k)$ contains three contiguous components according to the
    following rules:
    \[
        \outcome(G+k)=\begin{cases}
            \mathscr{L} & k\leq -\ltp(G),\\
            \mathscr{L} & 0\leq k<\ntp(G)\text{ and }\outcome(G)=\mathscr{L},\\
            \mathscr{N} & -\ltp(G)<k\leq-\ntp(G)\text{ and
            }\outcome(G)=\mathscr{R},\\
            \mathscr{N} & -\ltp(G)<k<\rtp(G)\text{ and
            }\outcome(G)=\mathscr{N},\\
            \mathscr{N} & \ntp(G)\leq k<\rtp(G)\text{ and
            }\outcome(G)=\mathscr{L},\\
            \mathscr{R} & 0\geq k>-\ntp(G)\text{ and
            }\outcome(G)=\mathscr{R},\\
            \mathscr{R} & k\geq \rtp(G),
        \end{cases}
    \]
    for all integers $k$.
\end{theorem}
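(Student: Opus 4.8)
The plan is to reduce to a strictly $\mathscr{P}$-free representative and then read off the three blocks from a single monotonicity principle: adding the game $1$ can only move the outcome towards $\mathscr{R}$, and adding $\overline{1}$ only towards $\mathscr{L}$.

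First I would fix a strictly $\mathscr{P}$-free form $H\in\set{A}$ with $G\equiv_\set{A}H$, which exists since $G\in\pf{A}[A]$. As $\set{A}$ is a semigroup containing $1$ and $\overline{1}$, it contains the (game) integer $k$ for every $k\neq0$, so $\outcome(G+k)=\outcome(H+k)$ for all such $k$ straight from $G\equiv_\set{A}H$. Moreover each outcome corresponds to exactly one tipping point vanishing---$\outcome(X)=\mathscr{R}\iff\rtp(X)=0$, and similarly for $\ltp$ and $\ntp$---so the equality of tipping points modulo $\set{A}$ (noted after \cref{def:tipping-points}) forces $\outcome(G)=\outcome(H)$. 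Hence the sequences $k\mapsto\outcome(G+k)$ and $k\mapsto\outcome(H+k)$ agree everywhere and share the same tipping points, and it is enough to prove the statement for $H$. By \cref{lem:p-free-plus-integer}, each $H+k$ is strictly $\mathscr{P}$-free and so lies in $\pf{A}$; so do $1$ and $\overline{1}$, whose outcomes are $\mathscr{R}$ and $\mathscr{L}$.

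The heart of the argument is to show that $\set{R}=\{k:\outcome(H+k)=\mathscr{R}\}$ is upward closed and $\set{L}=\{k:\outcome(H+k)=\mathscr{L}\}$ is downward closed. For the upward step with $k\geq0$ I would write $H+(k+1)=(H+k)+1$ and apply condition (2) of \cref{def:outcome-stable}. The delicate case is $k<0$, where passing from $H+k$ to $H+(k+1)$ \emph{removes} a copy of $\overline{1}$ rather than adding a $1$ (recall that $1+\overline{1}\not\equiv0$ in general, so there is no cancellation across the origin). Here I would set $Y=H+(k+1)$ and argue by contradiction: if $\outcome(Y)=\mathscr{L}$ then condition (1) gives $\outcome(Y+\overline{1})=\mathscr{L}$, while if $\outcome(Y)=\mathscr{N}$ then condition (3) gives $\outcomeL(\overline{1}+Y)=\mathscr{L}$, so Left wins moving first and $\outcome(Y+\overline{1})\neq\mathscr{R}$; either outcome contradicts $k\in\set{R}$, forcing $\outcome(Y)=\mathscr{R}$. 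The downward closure of $\set{L}$ is the mirror image, using conditions (2) and (4). I expect this sign-crossing step to be the only real obstacle, and it is precisely what makes all four conditions of \cref{def:outcome-stable} necessary.

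With monotonicity established, the block structure follows quickly. By \cref{tipping-point-existence} the sets $\set{R}$ and $\set{L}$ are nonempty (containing $\rtp(H)$ and $-\ltp(H)$), so upward/downward closure gives $\set{R}=[\min\set{R},\infty)$ and $\set{L}=(-\infty,\max\set{L}]$; they are disjoint, whence $\max\set{L}<\min\set{R}$, and every integer strictly between them has outcome neither $\mathscr{L}$ nor $\mathscr{R}$, hence $\mathscr{N}$ since $H+k$ is $\mathscr{P}$-free. It remains only to match $\min\set{R}$ and $\max\set{L}$ to the tipping points by splitting on $\outcome(G)\in\{\mathscr{L},\mathscr{N},\mathscr{R}\}$. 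When the relevant tipping point is nonzero one gets $\min\set{R}=\rtp(G)$ and $\max\set{L}=-\ltp(G)$ directly; in the two boundary cases the vanishing tipping point is replaced by $\ntp(G)$, which records the distance from $0$ to the nearest $\mathscr{N}$: if $\outcome(G)=\mathscr{L}$ then $\ltp(G)=0$ and $\max\set{L}=\ntp(G)-1$, and if $\outcome(G)=\mathscr{R}$ then $\rtp(G)=0$ and $\min\set{R}=1-\ntp(G)$. Reading each case back against the displayed piecewise formula then finishes the proof; this final bookkeeping is routine but must be done carefully.
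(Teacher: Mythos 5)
Your proposal is correct, and it reaches the result by a noticeably different organisation than the paper, even though the toolkit is the same (reduction to a strict representative, \cref{lem:p-free-plus-integer} to exclude $\mathscr{P}$, outcome-stability applied to sums with integer games, and \cref{tipping-point-existence}). The paper never states a stepwise monotonicity principle: it anchors its decompositions at the tipping points themselves, e.g.\ writing $G+\overline{k}=(G+\overline{\ltp(G)})+\overline{k-\ltp(G)}$ so that both summands are $\mathscr{P}$-free with outcome $\mathscr{L}$ and condition (1) of \cref{def:outcome-stable} finishes in one shot; the interior of the line is then handled by a case analysis on $\outcome(G)$, using an induction outward from $0$ (via Left's move from $G+k$ to $G+(k-1)$) for the segment adjacent to the origin, and a tipping-point-anchored decomposition plus minimality for the $\mathscr{N}$-strip. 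Because every decomposition there adds integers of a single sign, the paper never confronts the failure of cancellation $1+\overline{1}\not\equiv0$. Your route instead proves single-step monotonicity (upward closure of $\set{R}$, downward closure of $\set{L}$), which is exactly what forces you to face the sign-crossing step, and your contradiction argument there---conditions (1) and (3) in one direction, (2) and (4) in the other---is a correct and clean resolution; the block structure and the endpoint matching then follow mechanically from the definitions and from existence of $\ntp$. What your organisation buys is one reusable principle from which the whole picture falls out, plus an explicit accounting of where each of the four outcome-stability conditions is genuinely needed; what the paper's buys is that its interior-region arguments are the same style later reused in \cref{lem:tech1,lem:tech2,lem:tech3}. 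A further small merit of your write-up: you justify the opening reduction ``we may assume $G\in\pf{A}$'' by noting that the outcome of a form is determined by which tipping point vanishes, which is more careful than the paper's unexplained first sentence.
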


\begin{proof}
    We may assume that $G\in\pf{A}$. We know that
    $\outcome(G+\overline{\ltp(G)})=\mathscr{L}$. Now let $k$ be an integer
    satisfying $k>\ltp(G)$. Since $\set{A}$ is an outcome-stable semigroup, it
    follows from \cref{tipping-point-existence,lem:p-free-plus-integer} that
    \[
        \outcome(G+\overline{k})=
        \outcome(\underbrace{G+\overline{\ltp(G)}}_{\mathscr{L}}+
        \underbrace{\overline{k-\ltp(G)}}_{\mathscr{L}})=\mathscr{L}.
    \]
    By symmetry, it follows that $\outcome(G+k)=\mathscr{R}$ for all $k\geq
    \rtp(G)$.

    If $\outcome(G)=\mathscr{N}$, then the result follows immediately from
    \cref{tipping-point-existence}.

    Otherwise, by symmetry, it suffices to consider when
    $\outcome(G)=\mathscr{L}$. Recall from \cref{def:tipping-points} that
    $\outcome(G+\ntp(G))=\mathscr{N}$, and also that
    $\outcome(G+0)=\outcome(G)=\mathscr{L}$. Now, if $0<k<\ntp(G)$, then we
    know by \cref{def:tipping-points} that $\outcome(G+k)\neq\mathscr{N}$. By
    \cref{lem:p-free-plus-integer}, we have that
    $\outcome(G+k)\neq\mathscr{P}$. Since Left has an option to $G+(k-1)$,
    which has outcome $\mathscr{L}$ by induction, it follows that $G+k$ must
    itself have outcome $\mathscr{L}$.

    The only remaining case is where $\ntp(G)<k<\rtp(G)$ (and still
    $\outcome(G)=\mathscr{L}$). Again, since $\set{A}$ is an outcome-stable
    semigroup, it follows from
    \cref{tipping-point-existence,lem:p-free-plus-integer} that
    \[
        \outcome(G+k)=
        \outcome(\underbrace{G+\ntp(G)}_{\mathscr{N}}+
        \underbrace{(k-\ntp(G))}_{\mathscr{R}})=\mathscr{N}.
    \]
\end{proof}

Note that we do \emph{not} require 1 and $\overline{1}$ to be invertible here
(as inverses of each other). This is unlike the argument in
\cite[pp.~5--6]{milley.renault:invertible}, as integers are naturally
invertible in $\set{E}$.

Even though we have shown the $\mathscr{L}$-, $\mathscr{R}$-, and
$\mathscr{N}$-tipping points are well-defined for games of all outcomes,
\cref{thm:tipping-points} really does require $G$ to be $\mathscr{P}$-free in
the hypothesis. For example, take the dead-ending universe $\set{E}$, which we
know is an outcome-stable semigroup of games containing 1 and $\overline{1}$.
Now consider the game $*+*\in\set{E}$, and observe that
\begin{align*}
    \outcome(*+*)&=\mathscr{N},\\
    \outcome(*+*+1)&=\mathscr{P},\\
    \outcome(*+*+2)&=\mathscr{N},\rlap{ and}\\
    \outcome(*+*+n)&=\mathscr{R}\rlap{\quad for all $n\geq3$,}
\end{align*}
seemingly violating the behaviour asserted by \cref{thm:tipping-points}. But
$*+*$ is not $\mathscr{P}$-free, since the subposition $*$ has outcome
$\mathscr{P}$, and so \cref{thm:tipping-points} makes no assertion at all here.
It is perhaps interesting to observe, however, that any $\mathscr{P}$ appearing
in the sequence $(\outcome(G+n))_{n\in\mathbb{Z}}$ must be isolated (whether or
not $G$ is $\mathscr{P}$-free); i.e.\ there can exist no adjacent copies of
$\mathscr{P}$. To see this, recall that a game $G$ with outcome $\mathscr{P}$
must satisfy $\outcome(G+1)\geq\mathscr{N}\geq\outcome(G+\overline{1})$; then,
since $\mathscr{P}$ is incomparable with $\mathscr{N}$ in the partial order of
outcomes, neither $G+1$ nor $G+\overline{1}$ may have outcome $\mathscr{P}$.

We adopt a similar structure of proof in the following technical lemmas to that
of Milley and Renault \cite{milley.renault:invertible} in order to establish
the relationships between the tipping points of a form and those of its
options. These will help us later prove how the tipping points inform the
outcomes of sums of games. 

Although symmetric results are often left unstated, we declare their existence
for reference during a particularly technical proof that will follow
(\cref{thm:final-piece}). In case of any doubt left in the reader's mind as to
what the symmetric statements are, we include them in summary tables where
everything is explicit (\cref{tab:techs,tab:11-14,tab:combo} in
\cref{app:tables}).

\begin{lemma}[cf. {\cite[Lemmas 7--9 on pp.~6--7]{milley.renault:invertible}}]
    \label{lem:tech1}
    If $\set{A}$ is an outcome-stable, hereditary monoid containing $1$ and
    $\overline{1}$, and $G\in\pf{A}$, then $\ntp(G^L)\leq\rtp(G)$ for all Left
    options $G^L$ of $G$ with $\outcome(G^L)\neq\mathscr{R}$. (The symmetric
    result holds for $\ntp(G^R)\leq\ltp(G)$ when
    $\outcome(G^R)\neq\mathscr{L}$.)
\end{lemma}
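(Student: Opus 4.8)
The plan is to turn the inequality $\ntp(G^L)\le\rtp(G)$ into an assertion about a single outcome value and then dispatch that assertion with a one-move argument. First I would record the structural set-up: since $G$ is $\mathscr{P}$-free and $\set{A}$ is hereditary, the option $G^L$ again lies in $\pf{A}$ (and hence in $\pf{A}[A]$), so \cref{thm:tipping-points} is available for $G^L$; moreover $\outcome(G^L)\neq\mathscr{P}$. Together with the hypothesis $\outcome(G^L)\neq\mathscr{R}$, this forces $\outcome(G^L)\in\{\mathscr{L},\mathscr{N}\}$, and these two cases are treated separately.

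If $\outcome(G^L)=\mathscr{N}$ there is nothing to do: $\ntp(G^L)=0\le\rtp(G)$ straight from \cref{def:tipping-points}. So the substance lies in the case $\outcome(G^L)=\mathscr{L}$. Writing $r:=\rtp(G)$, I would apply \cref{thm:tipping-points} to $G^L$: for a form of outcome $\mathscr{L}$ and for nonnegative $k$, one has $\outcome(G^L+k)=\mathscr{L}$ precisely when $k<\ntp(G^L)$. Since $r\ge0$, the target inequality $\ntp(G^L)\le r$ is therefore equivalent to the single statement $\outcome(G^L+r)\neq\mathscr{L}$.

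It remains to establish $\outcome(G^L+r)\neq\mathscr{L}$, and here I would argue by contradiction from $\outcome(G+r)=\mathscr{R}$, which holds by the definition of $\rtp(G)$. Suppose $\outcome(G^L+r)=\mathscr{L}$; then in particular $\outcomeR(G^L+r)=\mathscr{L}$, i.e.\ Left wins $G^L+r$ with Right to move. But $G^L+r$ is a Left option of $G+r$, so Left moving first in $G+r$ may move there and win, giving $\outcomeL(G+r)=\mathscr{L}$ and contradicting $\outcome(G+r)=\mathscr{R}$. This closes the case and hence the lemma. The parenthetical symmetric statement follows by the mirror-image argument, with the roles of Left and Right interchanged and $\outcome(G+\overline{\ltp(G)})=\mathscr{L}$ used in place of $\outcome(G+\rtp(G))=\mathscr{R}$.

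I do not anticipate a genuine obstacle: the whole proof is the reformulation in the second paragraph together with the short strategy argument in the third. The only points requiring attention are checking that the two cases on $\outcome(G^L)$ are exhaustive---this is exactly where $\mathscr{P}$-freeness of the subposition $G^L$ and the hypothesis $\outcome(G^L)\neq\mathscr{R}$ are both needed---and confirming $G^L\in\pf{A}$ before invoking \cref{thm:tipping-points}, so that its standing hypotheses (outcome-stability, and the presence of $1$ and $\overline{1}$) are in force. Note that outcome-stability enters only through this appeal to \cref{thm:tipping-points} and is not used directly.
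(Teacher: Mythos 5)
Your proposal is correct and follows essentially the same route as the paper: the paper likewise observes that $\outcome(G+\rtp(G))=\mathscr{R}$ forces $\outcomeR(G^L+\rtp(G))=\mathscr{R}$ (your contradiction step is just the contrapositive of this one-move argument) and then reads off $\ntp(G^L)\leq\rtp(G)$ from \cref{thm:tipping-points}. Your explicit split into the cases $\outcome(G^L)=\mathscr{N}$ and $\outcome(G^L)=\mathscr{L}$ is merely a more detailed unpacking of the paper's final appeal to that theorem.
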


\begin{proof}
    Since $\outcome(G+\rtp(G))=\mathscr{R}$, it follows that
    $\outcomeR(G^L+\rtp(G))=\mathscr{R}$ for all Left options $G^L$ of $G$; and
    hence $\outcome(G^L+\rtp(G))\leq\mathscr{N}$. If
    $\outcome(G^L)\neq\mathscr{R}$ then the result $\ntp(G^L)\leq\rtp(G)$
    follows from \cref{thm:tipping-points}.
\end{proof}

\begin{lemma}
    \label{lem:left-end-n-r1}
    If $G$ is a $\mathscr{P}$-free Left end with $\outcome(G)=\mathscr{N}$,
    then $\rtp(G)=1$.
\end{lemma}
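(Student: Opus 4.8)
The plan is to prove the two bounds $\rtp(G)\geq1$ and $\rtp(G)\leq1$ separately. The lower bound is immediate from \cref{def:tipping-points}: since $\outcome(G)=\outcome(G+0)=\mathscr{N}\neq\mathscr{R}$, the integer $0$ cannot serve as the $\mathscr{R}$-tipping point, so $\rtp(G)\geq1$. Everything therefore reduces to showing $\outcome(G+1)=\mathscr{R}$, which gives $\rtp(G)\leq1$.

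To compute $\outcome(G+1)$ I would first pin down $\outcomeL(G+1)$. Because $G$ is a Left end, Left has no option inside the $G$ component, so her unique option in $G+1$ is to play the $1$ down to $0$, reaching $G$ with Right to move. Since $\outcome(G)=\mathscr{N}$, the first player wins $G$; here that first player is Right, so $\outcomeR(G)=\mathscr{R}$ and Right wins this position. As this was Left's only move, Left loses $G+1$ moving first, i.e.\ $\outcomeL(G+1)=\mathscr{R}$.

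Rather than also computing $\outcomeR(G+1)$ directly, I would appeal to $\mathscr{P}$-freeness. By \cref{lem:p-free-plus-integer}, the game $G+1$ is $\mathscr{P}$-free, so in particular $\outcome(G+1)\neq\mathscr{P}$. The only outcomes compatible with $\outcomeL(G+1)=\mathscr{R}$ are $\mathscr{R}$ (if Right also wins moving first) and $\mathscr{P}$ (if Right loses moving first); ruling out $\mathscr{P}$ forces $\outcome(G+1)=\mathscr{R}$. Combining the two bounds then yields $\rtp(G)=1$.

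The one genuinely delicate point is precisely this use of $\mathscr{P}$-freeness to sidestep an analysis of Right's moving-first play: a direct attack on $\outcomeR(G+1)$ would require exhibiting a Right option $G^R$ with $\outcomeL(G^R)=\mathscr{R}$ and then arguing that adding $1$ preserves Right's win, which is not transparent, so the outcome-exclusion argument is what makes the proof short. I would also sanity-check the degenerate case where $G$ has no Right options, where $G=0$ is forced and $G+1=1$ has outcome $\mathscr{R}$ outright, consistent with the claim (and in any event covered by the argument above, which never splits into cases).
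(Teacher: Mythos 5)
Your proposal is correct and follows essentially the same route as the paper: both observe that Left's only move on $G+1$ is to $G$, where Right (moving first on an $\mathscr{N}$-position) wins, forcing $\outcome(G+1)\in\{\mathscr{P},\mathscr{R}\}$, and both then invoke \cref{lem:p-free-plus-integer} to rule out $\mathscr{P}$. The only cosmetic difference is that you spell out the lower bound $\rtp(G)\geq1$ explicitly, which the paper leaves implicit.
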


\begin{proof}
    Playing first on $G+1$, Left must move to $G$ (since $G$ is a Left end),
    leaving Right to move on a position with outcome $\mathscr{N}$. Thus, Right
    wins going second, which means $\outcome(G+1)\leq\mathscr{P}$. By
    \cref{lem:p-free-plus-integer}, we know $G+1$ is $\mathscr{P}$-free, and
    hence $\outcome(G+1)=\mathscr{R}$, yielding the result.
\end{proof}

\begin{lemma}[cf. {\cite[Lemma 7 on p.~6]{milley.renault:invertible}}]
    \label{lem:tech2}
    If $\set{A}$ is an outcome-stable, hereditary monoid containing $1$ and
    $\overline{1}$, and $G\in\pf{A}$ with $\outcome(G)=\mathscr{N}$, then
    either $G$ is Left end-like with $\rtp(G)=1$, or else there exists a Left
    option $G^L$ of $G$ with $\outcome(G^L)=\mathscr{L}$ such that
    $\ntp(G^L)=\rtp(G)$. (The symmetric result holds for $\ntp(G^R)=\ltp(G)$.)
\end{lemma}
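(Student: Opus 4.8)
The plan is to produce the required Left option as Left's winning first move on the position $P=G+(\rtp(G)-1)$, which sits exactly one integer below the $\mathscr{R}$-region of $G$. Since $\outcome(G)=\mathscr{N}$ we have $\rtp(G)\geq 1$, and \cref{thm:tipping-points} gives $\outcome(G+k)=\mathscr{N}$ for all $0\leq k<\rtp(G)$ together with $\outcome(G+\rtp(G))=\mathscr{R}$; in particular $\outcome(P)=\mathscr{N}$, so $\outcomeL(P)$ is a win for Left. Throughout, heredity of $\set{A}$ and \cref{lem:p-free-plus-integer} keep every option and integer translate inside $\pf{A}$, so \cref{thm:tipping-points,lem:tech1} remain applicable to whatever $G^L$ we extract.

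First I would handle the generic case $\rtp(G)\geq 2$. Then $\rtp(G)-1\geq 1$ is a positive integer, hence not Left end-like, so $P$ is not Left end-like (a sum is Left end-like only when both summands are) and Left must win by a genuine move. Moving inside the integer reaches $G+(\rtp(G)-2)$, which still has outcome $\mathscr{N}$ by \cref{thm:tipping-points} and is therefore losing; so Left's winning move is genuine in $G$, yielding a Left option $G^L$ with $\outcomeR(G^L+(\rtp(G)-1))=\mathscr{L}$. Hence $\outcome(G^L+(\rtp(G)-1))\in\{\mathscr{L},\mathscr{P}\}$, and \cref{lem:p-free-plus-integer} excludes $\mathscr{P}$, leaving $\outcome(G^L+(\rtp(G)-1))=\mathscr{L}$.

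To conclude I would run this back through the outcome sequence of $G^L$. A non-negative translate of $G^L$ having outcome $\mathscr{L}$ forces, by \cref{thm:tipping-points} applied to $G^L\in\pf{A}$, both $\outcome(G^L)=\mathscr{L}$ and $\rtp(G)-1<\ntp(G^L)$, i.e.\ $\ntp(G^L)\geq\rtp(G)$; since $\outcome(G^L)=\mathscr{L}\neq\mathscr{R}$, \cref{lem:tech1} gives $\ntp(G^L)\leq\rtp(G)$, and the two inequalities collapse to $\ntp(G^L)=\rtp(G)$, the second alternative. The case $\rtp(G)=1$ is then immediate: here $P=G$, and if $G$ is Left end-like we are in the first alternative with $\rtp(G)=1$ (a genuine Left end also falling directly under \cref{lem:left-end-n-r1}); otherwise Left's winning move on $G$ is genuine, landing on a $\mathscr{P}$-free $G^L$ with $\outcome(G^L)=\mathscr{L}$, and the identical squeeze gives $\ntp(G^L)=\rtp(G)=1$. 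The symmetric statement $\ntp(G^R)=\ltp(G)$ follows by passing to the conjugate.

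The main thing to get right is not depth but the end-like bookkeeping: I must lean on the remark that a sum is Left end-like only when both summands are, so that for $\rtp(G)\geq 2$ the positive-integer summand genuinely prevents any free tombstone win and forces Left into an honest move in $G$ or in the integer. The exclusion of the integer move relies on $\rtp(G)\geq 2$ ensuring $\rtp(G)-2\geq 0$, so that $G+(\rtp(G)-2)$ still lies in the $\mathscr{N}$-region rather than dropping below $0$. The one load-bearing implication---that a non-negative translate of outcome $\mathscr{L}$ forces base outcome $\mathscr{L}$ with a strictly larger $\ntp$---is exactly the contiguous-block shape guaranteed by \cref{thm:tipping-points}, and it is the meeting of this lower bound with the upper bound of \cref{lem:tech1} that makes the equality $\ntp(G^L)=\rtp(G)$ fall out.
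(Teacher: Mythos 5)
Your proof is correct and follows essentially the same route as the paper's: analyse Left's first-player win on $G+(\rtp(G)-1)$, use the end-like bookkeeping and \cref{lem:left-end-n-r1} to dispose of the $\rtp(G)=1$/end-like alternative, rule out the integer move via \cref{thm:tipping-points}, exclude outcome $\mathscr{P}$ via \cref{lem:p-free-plus-integer}, and squeeze $\ntp(G^L)=\rtp(G)$ between the tipping-point lower bound and \cref{lem:tech1}. The only difference is organisational (you case-split on $\rtp(G)\geq 2$ versus $\rtp(G)=1$ rather than on whether the sum is Left end-like), which changes nothing of substance.
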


\begin{proof}
    By \cref{thm:tipping-points}, recall that
    $\outcome(G+(\rtp(G)-1))=\mathscr{N}$. As such, either $G+(\rtp(G)-1)$ is
    Left end-like or else Left has a winning move.

    If $G+(\rtp(G)-1)$ is a Left end, then $\rtp(G)=1$ by
    \cref{lem:left-end-n-r1}. If $G+(\rtp(G)-1)$ has a Left tombstone, then
    both $G$ and $\rtp(G)-1$ must be Left end-like; but $\rtp(G)-1$ is a
    non-negative integer, and hence it is Left end-like if and only if
    $\rtp(G)=1$. So, assume now that Left has a winning move.

    By \cref{lem:p-free-plus-integer}, $G+(\rtp(G)-1)$ is $\mathscr{P}$-free,
    and hence Left's winning move must have outcome $\mathscr{L}$. Again by
    \cref{thm:tipping-points}, observe that $\outcome(G+k)=\mathscr{N}$ for all
    $k$ satisfying $0\leq k\leq\rtp(G)-1$. It then follows that Left's winning
    move on $G+(\rtp(G)-1)$ must be of the form $G^L+(\rtp(G)-1)$. If this
    $G^L$ had outcome $\mathscr{R}$ or $\mathscr{N}$, then so too would
    $G^L+(\rtp(G)-1)$, which is not the case, and hence
    $\outcome(G^L)=\mathscr{L}$. It is then clear that $\rtp(G)-1<\ntp(G^L)$,
    which is equivalent to writing $\ntp(G^L)\geq\rtp(G)$. By \cref{lem:tech1},
    we have the result.
\end{proof}

\begin{lemma}
    \label{prop:ends-tp}
    If $\set{A}$ is an outcome-stable, hereditary monoid containing $1$ and
    $\overline{1}$, and $G\in\pf{A}$ is a Left end, then $\rtp(G)=\ntp(G)+1$.
    (If $G$ is instead a Right end, then $\ltp(G)=\ntp(G)+1$.)
\end{lemma}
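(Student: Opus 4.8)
The plan is to reduce to the Left-end case (the Right-end case following by the stated symmetry) and then to prove the two bounds $\rtp(G)\ge\ntp(G)+1$ and $\rtp(G)\le\ntp(G)+1$ separately. First I would record the basic structural facts: since $G$ is a Left end, Left moving first in $G$ has no move and therefore wins under mis\`ere play, so $\outcomeL(G)=\mathscr{L}$; combined with $G$ being $\mathscr{P}$-free, this forces $\outcome(G)\in\{\mathscr{L},\mathscr{N}\}$. In either case \cref{thm:tipping-points} yields $\outcome(G+\ntp(G))=\mathscr{N}$ (with $\ntp(G)=0$ when $\outcome(G)=\mathscr{N}$). Since every $G+k$ with $k\ge\rtp(G)$ has outcome $\mathscr{R}$ by the same theorem, the position $G+\ntp(G)$ of outcome $\mathscr{N}$ cannot occur at index $\ge\rtp(G)$, so $\ntp(G)<\rtp(G)$; that is, $\rtp(G)\ge\ntp(G)+1$.

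For the reverse inequality---the heart of the argument---I would examine $G+(\ntp(G)+1)$ directly and exploit the Left-end hypothesis. Because $G$ is a Left end, the only Left option of this sum is obtained by decrementing the integer, namely $G+\ntp(G)$. Thus Left moving first is forced into $G+\ntp(G)$, a position of outcome $\mathscr{N}$ in which Right, now to move, wins; hence $\outcomeL(G+(\ntp(G)+1))=\mathscr{R}$. (When $\ntp(G)=0$ this is precisely the situation of \cref{lem:left-end-n-r1}, so I can alternatively invoke that lemma rather than re-running the forced-move analysis.)

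To finish, I would appeal to $\mathscr{P}$-freeness: by \cref{lem:p-free-plus-integer} the game $G+(\ntp(G)+1)$ is $\mathscr{P}$-free, so its outcome is not $\mathscr{P}$. Together with $\outcomeL(G+(\ntp(G)+1))=\mathscr{R}$ this leaves only $\outcome(G+(\ntp(G)+1))=\mathscr{R}$, whence $\rtp(G)\le\ntp(G)+1$. Combining the two bounds gives $\rtp(G)=\ntp(G)+1$. The one step requiring care---the main, if modest, obstacle---is the outcome bookkeeping around the forced move: one must use the Left-end hypothesis to see that Left has exactly one option, confirm that the resulting $\mathscr{N}$-position is a win for Right with Right to move, and then use $\mathscr{P}$-freeness to rule out the spurious possibility $\outcome=\mathscr{P}$, which is exactly what would break the clean relation $\rtp=\ntp+1$ for ends that are not $\mathscr{P}$-free.
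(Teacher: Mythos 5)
Your proof is correct, but it takes a genuinely different route from the paper's. The paper splits on the outcome of $G$: when $\outcome(G)=\mathscr{N}$ it simply cites \cref{lem:left-end-n-r1}, and when $\outcome(G)=\mathscr{L}$ it applies \cref{lem:tech2} to the $\mathscr{N}$-position $G+\ntp(G)$, whose unique Left option $G+(\ntp(G)-1)$ has $\mathscr{N}$-tipping point $1$, forcing $\rtp(G+\ntp(G))=1$. You instead prove the two inequalities separately: the lower bound $\rtp(G)\geq\ntp(G)+1$ falls out of \cref{thm:tipping-points}, and the upper bound comes from a direct forced-move analysis of $G+(\ntp(G)+1)$---Left's only move is to the $\mathscr{N}$-position $G+\ntp(G)$, so $\outcomeL(G+(\ntp(G)+1))=\mathscr{R}$, and \cref{lem:p-free-plus-integer} rules out outcome $\mathscr{P}$. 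In effect you re-run the proof of \cref{lem:left-end-n-r1} shifted by $\ntp(G)$, which unifies the two outcome cases into one argument. What each approach buys: the paper's proof is shorter given that \cref{lem:tech2} is already available; yours is more self-contained and elementary, and it notably does not use the hereditary hypothesis at all (the paper's route needs it, since \cref{lem:tech1,lem:tech2} apply \cref{thm:tipping-points} to options of $G$, which must therefore lie in $\set{A}$), so your argument in fact establishes the conclusion for any outcome-stable monoid containing $1$ and $\overline{1}$.
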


\begin{proof}
    Since $G$ is a Left end, we must have that $\outcome(G)\geq\mathscr{N}$. If
    $\outcome(G)=\mathscr{N}$, then we have the result by
    \cref{lem:left-end-n-r1}, so assume that $\outcome(G)=\mathscr{L}$ (and so
    $\ntp(G)\geq1$).

    We know by \cref{thm:tipping-points} that $G+\ntp(G)=\mathscr{N}$. By
    \cref{lem:tech2}, it follows that $\ntp(G+(\ntp(G)-1))=\rtp(G+\ntp(G))$.
    And $\ntp(G+(\ntp(G)-1))=1$, so $\rtp(G+\ntp(G))=1$, yielding the result.
\end{proof}

We make explicit, for completeness, what we already know from the proof of
\cref{tipping-point-existence}: if $G$ is both a Left and a Right end, then $G$
must be the zero form, and hence it follows from \cref{prop:ends-tp} that
$\ltp(G)=\rtp(G)=1$ as clearly $\ntp(G)=0$.

\begin{lemma}[cf. {\cite[Lemmas 8--9 on p.~7]{milley.renault:invertible}}]
    \label{lem:tech3}
    If $\set{A}$ is an outcome-stable, hereditary monoid containing $1$ and
    $\overline{1}$, and $G\in\pf{A}$ with $\outcome(G)=\mathscr{L}$, then we
    have the following:
    \begin{enumerate}
        \item
            if $\ntp(G)\neq\rtp(G)-1$, then there exists some option $G^L$ with
            $\outcome(G^L)=\mathscr{L}$ such that $\ntp(G^L)=\rtp(G)$;
        \item
            if $G^R$ is a Right option of $G$, then $\rtp(G^R)\geq\ntp(G)$; and
        \item
            there exists some $G^R$ such that $\rtp(G^R)=\ntp(G)$.
    \end{enumerate}
\end{lemma}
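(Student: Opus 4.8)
The plan is to treat the three parts essentially independently, in each case examining $G$ shifted by a carefully chosen integer and reading off information from an optimal move, exactly in the spirit of the proof of \cref{lem:tech2}. Throughout I would use that, since $G\in\pf{A}$ and $\set{A}$ is a hereditary monoid containing every integer, all of the games $G+k$, $G^L+k$, and $G^R+k$ again lie in $\pf{A}$ (by \cref{lem:p-free-plus-integer}), so that \cref{thm:tipping-points,lem:tech1} apply to them. I would also record at the outset that $\outcome(G)=\mathscr{L}$ forces $\ntp(G)\geq1$, and that $\ntp(G)<\rtp(G)$, since $\outcome(G+\ntp(G))=\mathscr{N}$ while $\outcome(G+\rtp(G))=\mathscr{R}$.

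For (2), the easiest part, I would fix a Right option $G^R$ and an integer $k$ with $0\leq k<\ntp(G)$. Then $\outcome(G+k)=\mathscr{L}$ by \cref{thm:tipping-points}, so Left wins $G+k$ even when Right moves first; in particular, after Right's move to $G^R+k$ it is Left's turn to win, giving $\outcomeL(G^R+k)=\mathscr{L}$ and hence $\outcome(G^R+k)\neq\mathscr{R}$. As this holds for every such $k$, no non-negative integer below $\ntp(G)$ can be the $\mathscr{R}$-tipping point of $G^R$, so $\rtp(G^R)\geq\ntp(G)$. For (3), I would pass to $G+\ntp(G)$, which has outcome $\mathscr{N}$ by \cref{thm:tipping-points}, so Right wins moving first. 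Since $\ntp(G)\geq1$ is a positive integer, Right has no move on the integer component, so every Right move has the form $G^R+\ntp(G)$; taking a winning one and using that $G+\ntp(G)$ is $\mathscr{P}$-free to exclude outcome $\mathscr{P}$, I would obtain $\outcome(G^R+\ntp(G))=\mathscr{R}$, whence $\rtp(G^R)\leq\ntp(G)$. Combined with (2) applied to this same $G^R$, this yields $\rtp(G^R)=\ntp(G)$.

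Part (1) is where I expect the real work. The hypothesis $\ntp(G)\neq\rtp(G)-1$ together with $\ntp(G)<\rtp(G)$ gives $\ntp(G)\leq\rtp(G)-2$, so in particular $\rtp(G)\geq3$. I would then examine $G+(\rtp(G)-1)$, which has outcome $\mathscr{N}$, so Left wins moving first. The crucial book-keeping step is to rule out Left's decrement move: the option $G+(\rtp(G)-2)$ has outcome $\mathscr{N}$ precisely because $\ntp(G)\leq\rtp(G)-2$, so it is not winning for Left. Hence Left's winning move is $G^L+(\rtp(G)-1)$ for some Left option $G^L$, and $\mathscr{P}$-freeness forces $\outcome(G^L+(\rtp(G)-1))=\mathscr{L}$. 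From here I would argue that $\outcome(G^L)=\mathscr{L}$: any other outcome is impossible, since $\mathscr{P}$ is excluded by $\mathscr{P}$-freeness, and if $\outcome(G^L)\in\{\mathscr{N},\mathscr{R}\}$ then \cref{thm:tipping-points} shows $\outcome(G^L+j)\neq\mathscr{L}$ for every $j\geq0$. I would then read off from \cref{thm:tipping-points} that $\ntp(G^L)>\rtp(G)-1$, i.e.\ $\ntp(G^L)\geq\rtp(G)$, while \cref{lem:tech1} supplies the reverse inequality $\ntp(G^L)\leq\rtp(G)$ (valid since $\outcome(G^L)=\mathscr{L}\neq\mathscr{R}$), giving $\ntp(G^L)=\rtp(G)$.

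The main obstacle, then, is the move analysis in part (1): one must correctly identify that the only competing Left option is the decrement on the integer, dispatch it using the precise position of $\ntp(G)$ relative to $\rtp(G)$, and then pin down $\outcome(G^L)$ before combining the two inequalities coming from \cref{thm:tipping-points} and \cref{lem:tech1}. Parts (2) and (3) are routine once one remembers that a positive integer offers a move only to Left.
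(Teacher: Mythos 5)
Your proposal is correct and takes essentially the same route as the paper's proof: part (1) analyses $G+(\rtp(G)-1)$ with the integer-decrement move excluded by the hypothesis $\ntp(G)\neq\rtp(G)-1$ and then combines the resulting inequality with \cref{lem:tech1}; part (2) observes that Right options of positions Left wins going second cannot be winning; and part (3) extracts a winning Right move on $G+\ntp(G)$, excludes $\mathscr{P}$ by \cref{lem:p-free-plus-integer}, and invokes (2). The only point the paper makes explicit that you elide is the end-like bookkeeping for augmented forms---$\outcome(G)=\mathscr{L}$ forces $G$ not to be Right end-like (and, via \cref{prop:ends-tp}, the hypothesis of (1) forces $G$ not to be a Left end)---which is exactly what guarantees in part (3) that Right's first-player win on $G+\ntp(G)$ really does come from a move to some $G^R+\ntp(G)$ rather than from a tombstone or an empty set of Right options; your phrase ``taking a winning one'' presupposes this one-line observation.
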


\begin{proof}
    First we prove (1). By \cref{prop:ends-tp}, we know that $G$ is not a Left
    end. By \cref{thm:tipping-points}, recall that
    $\outcome(G+(\rtp(G)-1))=\mathscr{N}$. By hypothesis,
    $\ntp(G)\neq\rtp(G)-1$, and hence $\outcome(G+(\rtp(G)-2))=\mathscr{N}$. As
    such, Left's winning move in $G+(\rtp(G)-1)$ must be of the form
    $G^L+(\rtp(G)-1)$, which necessarily has outcome $\mathscr{L}$ (recall
    \cref{lem:p-free-plus-integer}). If $G^L$ had outcome $\mathscr{N}$ or
    $\mathscr{R}$, then so too would $G^L+(\rtp(G)-1)$, which is not the case,
    and hence $\outcome(G^L)=\mathscr{L}$. It is then clear that
    $\ntp(G^L)\geq\rtp(G)$, and we are done by \cref{lem:tech1}.

    Now we prove (2). By \cref{thm:tipping-points}, we know that
    $\outcome(G+(\ntp(G)-1))=\mathscr{L}$, and hence Right has no winning
    option. So, if $G^R$ is a Right option of $G$, then $G^R+(\ntp(G)-1)$ must
    have outcome $\mathscr{N}$ or $\mathscr{L}$. It then follows that
    $\rtp(G^R)\geq \ntp(G)$.

    Finally, we prove (3). Since $\outcome(G)=\mathscr{L}$, note that $G$
    cannot be Right end-like. By \cref{thm:tipping-points}, we know that
    $\outcome(G+\ntp(G))=\mathscr{N}$, and hence it follows that there exists a
    Right option $G^R$ of $G$ such that $\outcome(G^R+\ntp(G))=\mathscr{R}$
    (recall \cref{lem:p-free-plus-integer}). We then observe that
    $\rtp(G^R)\leq\ntp(G)$ (by \cref{thm:tipping-points}), and so
    $\rtp(G^R)=\ntp(G)$ by (2).
\end{proof}

A table summarising the results of this subsection can be found in
\cref{tab:techs} in \cref{app:tables}.

\subsection{Integer-invertible sets of games}
\label{sec:int}

\begin{definition}
    \label{def:integral}
    We say a set of games $\set{A}$ is \emph{integer-invertible} if
    $n,\overline{n}\in\set{A}$ and $n+\overline{n}\equiv_\set{A}0$ for all
    integers $n$.
\end{definition}

Note that, if $\set{A}$ is a semigroup of games, then it is integer-invertible
if and only if $1,\overline{1}\in\set{A}$ and $1+\overline{1}\equiv_\set{A}0$.
Furthermore, if $\set{U}$ is a universe (or, more generally, is
conjugate-closed and has the conjugate property), then $\set{U}$ is
integer-invertible if and only if $1\in\set{U}$ and $1$ is
$\set{U}$-invertible.

Whereas being outcome-stable is preserved under taking subsets, being
integer-invertible is \emph{not}. As a straightforward example, consider the
dicot universe $\set{D}$, which is a subuniverse of the dead-ending universe
$\set{E}$. It is known that $\set{E}$ is integer-invertible \cite[Theorem 19 on
p.~11]{milley.renault:invertible}, but $\set{D}$ cannot be integer-invertible
since $1\notin\set{D}$! But this is in fact the only way integer-invertibility
can fail for a subset: a subset of an integer-invertible set is itself
integer-invertible if and only if it contains both $n$ and $\overline{n}$ for
all integers $n$.

Of course, every integer-invertible semigroup of games is a monoid, which
simplifies some of the statements below. (But we remind the reader that the
strict form $\{\cdot\mid\cdot\}$ is not necessarily an element of monoids that
are not hereditary.) To see why this is the case, let $\set{A}$ be an
integer-invertible semigroup of games. We know that
$1+\overline{1}\equiv_\set{A}0$, and also that $1+\overline{1}\in\set{A}$.
Hence, if $G\in\set{A}$, then $1+\overline{1}+G\equiv_\set{A}G$, which yields
that $\set{A}$ is a monoid.

If we are working with an integer-invertible monoid $\set{A}$, then it follows
immediately from the definitions (of \emph{integer-invertible} and
\emph{semigroup}/\emph{monoid}) that if $G,H\in\set{A}$, then
$G+H\equiv_\set{A}G+n+H+\overline{n}$ for all integers $n$. We will make use of
this remark repeatedly below.

\begin{lemma}[cf. {\cite[Lemmas 11--12 on p.~7]{milley.renault:invertible}}]
    \label{lem:11}
    If $\set{A}$ is an outcome-stable and integer-invertible monoid, and
    $G,H\in\pf{A}[A]$ with $\outcome(G)=\mathscr{L}$ and
    $\outcome(H)=\mathscr{N}$, then we have the following:
    \begin{enumerate}
        \item
            if $\ntp(G)>\ltp(H)$, then $\outcome(G+H)=\mathscr{L}$; and
        \item
            if $\rtp(G)<\ltp(H)$, then $\outcome(G+H)=\mathscr{N}$.
    \end{enumerate}
    (The symmetric results hold when $\outcome(G)=\mathscr{R}$ and
    $\outcome(H)=\mathscr{N}$, exchanging $\mathscr{L}$- and
    $\mathscr{R}$-tipping points.)
\end{lemma}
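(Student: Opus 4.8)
The plan is to exploit integer-invertibility to slide integers between the two summands and then read off the outcome from \cref{thm:tipping-points} and \cref{def:outcome-stable}. As in the proof of \cref{thm:tipping-points}, I would first reduce to the case $G,H\in\pf{A}$ (strictly $\mathscr{P}$-free), since outcomes and tipping points are determined modulo $\set{A}$; note that $0\in\set{A}$ (take $n=0$ in \cref{def:integral}), so $\equiv_\set{A}$ preserves the outcome $\outcome$. The workhorse is the identity $G+H\equiv_\set{A}(G+n)+(H+\overline{n})$, valid for every integer $n$ because $\set{A}$ is integer-invertible; by \cref{lem:p-free-plus-integer} and the closure of $\set{A}$, both $G+n$ and $H+\overline{n}$ again lie in $\pf{A}$, so outcome-stability applies to them. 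I would also record two cheap facts: $\outcome(H)=\mathscr{N}$ forces $\ltp(H)\geq1$, and $\outcome(G)=\mathscr{L}$ forces $\rtp(G)\geq1$, since otherwise the relevant tipping point would report the wrong outcome at $0$.

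For part~(1), I would take the shift $n=\ltp(H)$. Then $\outcome(H+\overline{n})=\mathscr{L}$ holds by the definition of $\ltp(H)$, while the hypothesis $\ntp(G)>\ltp(H)=n\geq1$ places $n$ in the range $0\leq n<\ntp(G)$, so \cref{thm:tipping-points} gives $\outcome(G+n)=\mathscr{L}$. Now condition~(1) of \cref{def:outcome-stable} yields $\outcome((G+n)+(H+\overline{n}))=\mathscr{L}$, and transporting back across $\equiv_\set{A}$ gives $\outcome(G+H)=\mathscr{L}$.

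For part~(2), I would pin the outcome down from both sides. On the Left, condition~(3) of \cref{def:outcome-stable} applies directly to $G$ and $H$ and gives $\outcomeL(G+H)=\mathscr{L}$, so $\outcome(G+H)\in\{\mathscr{L},\mathscr{N}\}$. On the Right, I would shift by $n=\rtp(G)$: here $\outcome(G+n)=\mathscr{R}$ by the definition of $\rtp(G)$, and the hypothesis $\rtp(G)<\ltp(H)$ (together with $\rtp(G)\geq1$ and $\rtp(H)\geq0$, which give $-\rtp(G)<\rtp(H)$) places $-\rtp(G)$ strictly inside the window $-\ltp(H)<k<\rtp(H)$, so \cref{thm:tipping-points} gives $\outcome(H+\overline{n})=\mathscr{N}$. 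Condition~(4) of \cref{def:outcome-stable} then gives $\outcomeR((G+n)+(H+\overline{n}))=\mathscr{R}$, so the full outcome of that sum is not $\mathscr{L}$; transporting across $\equiv_\set{A}$ shows $\outcome(G+H)\neq\mathscr{L}$. Combined with $\outcome(G+H)\in\{\mathscr{L},\mathscr{N}\}$, this forces $\outcome(G+H)=\mathscr{N}$.

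The main obstacle is bookkeeping rather than conceptual: one must pick each shift so that $H$ lands in exactly the correct contiguous outcome-region of \cref{thm:tipping-points} while $G+n$ stays on the intended side, and one must remember that $\equiv_\set{A}$ transfers only the full outcome $\outcome$, not the one-sided outcomes $\outcomeL$ and $\outcomeR$. This last point is exactly why part~(2) is handled by squeezing from both sides with two separate applications of outcome-stability, rather than by transporting a single one-sided outcome. The symmetric statements with $\outcome(G)=\mathscr{R}$ follow by conjugation, interchanging the $\mathscr{L}$- and $\mathscr{R}$-tipping points.
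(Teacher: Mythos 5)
Your proposal is correct and follows essentially the same route as the paper's own proof: both parts are handled by sliding $\ltp(H)$ (respectively $\rtp(G)$) across the sum via integer-invertibility, reading off the shifted summands' outcomes from \cref{thm:tipping-points}, and applying the relevant conditions of \cref{def:outcome-stable}, with part~(2) pinned down by combining $\outcomeL(G+H)=\mathscr{L}$ with the $\leq\mathscr{N}$ bound from the shifted sum. Your version merely makes explicit some bookkeeping (the window checks, $0\in\set{A}$, and why only full outcomes transfer across $\equiv_\set{A}$) that the paper leaves implicit.
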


\begin{proof}
    We may assume $G,H\in\pf{A}$. First we prove (1). Observe that
    \begin{align*}
        \outcome(G+H)&=
        \outcome(\underbrace{G+\ltp(H)}_{\mathscr{L}}+
        \underbrace{H+\overline{\ltp(H)}}_{\mathscr{L}})\\
                     &=\mathscr{L}
    \end{align*}
    since $\set{A}$ is outcome-stable and integer-invertible.

    Now we prove (2). Observe that
    \begin{align*}
        \outcome(G+H)&=
        \outcome(\underbrace{G+\rtp(G)}_{\mathscr{R}}+
        \underbrace{H+\overline{\rtp(G)}}_{\mathscr{N}})\\
                     &\leq\mathscr{N}.
    \end{align*}
    But we know already that $\outcome(G+H)\geq\mathscr{N}$ since $\set{A}$ is
    outcome-stable, and hence $\outcome(G+H)=\mathscr{N}$.
\end{proof}

\begin{lemma}[cf. {\cite[Lemma 13 on p.~8]{milley.renault:invertible}}]
    \label{lem:13}
    If $\set{A}$ is an outcome-stable and integer-invertible monoid, and
    $G,H\in\pf{A}[A]$ with $\outcome(G),\outcome(H)=\mathscr{N}$, then
    \begin{align*}
        \mathrm{if~}\rtp(G)>\ltp(H)\mathrm{~or~}\ltp(G)<\rtp(H)\mathrm{,~then}
        \outcome(G+H)\geq\mathscr{N}.
    \end{align*}
    (The symmetric result holds for $\outcome(G+H)\leq\mathscr{N}$, reversing
    tipping point inequality conditions.)
\end{lemma}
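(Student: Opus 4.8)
The plan is to follow the proof of \cref{lem:11} closely, using integer-invertibility to split $G+H$ into two summands whose outcomes I can read off from the tipping-point theorem, and then invoking outcome-stability. As in \cref{lem:11}, I would first reduce to the case $G,H\in\pf{A}$, since outcomes are invariant under $\equiv_\set{A}$ and the tipping points are well-defined modulo $\set{A}$. I would also record at the outset that, as $\outcome(G)=\outcome(H)=\mathscr{N}$, each of $\ltp(G),\rtp(G),\ltp(H),\rtp(H)$ is at least $1$.

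First I would handle the hypothesis $\rtp(G)>\ltp(H)$. Using integer-invertibility I would write $G+H\equiv_\set{A}(G+\ltp(H))+(H+\overline{\ltp(H)})$, noting (via \cref{lem:p-free-plus-integer}) that both summands lie in $\pf{A}$ so that outcome-stability is available. By definition of $\ltp(H)$ the second summand has outcome $\mathscr{L}$; and because $-\ltp(G)<0\leq\ltp(H)<\rtp(G)$, the tipping-point theorem (\cref{thm:tipping-points}) places $\ltp(H)$ strictly inside the $\mathscr{N}$-band of $G$, so $\outcome(G+\ltp(H))=\mathscr{N}$. The $\mathscr{L}$-with-$\mathscr{N}$ clause of \cref{def:outcome-stable} then gives $\outcomeL\bigl((G+\ltp(H))+(H+\overline{\ltp(H)})\bigr)=\mathscr{L}$, which is exactly $\outcome(G+H)\geq\mathscr{N}$ once I pass back through the equivalence (outcomes being preserved).

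The hypothesis $\ltp(G)<\rtp(H)$ is the previous case with $G$ and $H$ interchanged---its condition is obtained from $\rtp(G)>\ltp(H)$ under the swap $G\leftrightarrow H$, and the conclusion $\outcome(G+H)\geq\mathscr{N}$ is symmetric---so it needs no separate argument. For the parenthesised symmetric statement, I would apply what has just been proved to $\overline{G}$ and $\overline{H}$: conjugation fixes the outcome $\mathscr{N}$, exchanges $\ltp$ and $\rtp$ on each form, and reverses the order on outcomes, so $\outcome(\overline{G}+\overline{H})\geq\mathscr{N}$ becomes $\outcome(G+H)\leq\mathscr{N}$ under the reversed inequality conditions $\rtp(G)<\ltp(H)$ or $\ltp(G)>\rtp(H)$.

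I do not anticipate a genuine obstacle here, as the structure is dictated by \cref{lem:11}; the points demanding care are purely bookkeeping. The first is selecting the shift $n=\ltp(H)$ (rather than any other integer) precisely so that it pairs an $\mathscr{L}$ summand with an $\mathscr{N}$ summand, and then verifying both band inequalities $-\ltp(G)<\ltp(H)<\rtp(G)$ rather than just the stated one. The second is remembering that outcome-stability yields only the one-sided conclusion $\outcomeL=\mathscr{L}$, which is equivalent to $\outcome\geq\mathscr{N}$ and therefore already suffices, so no attempt to pin down $\outcomeR$ is required.
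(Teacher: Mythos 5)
Your argument for the displayed claim is the paper's own proof: the same reduction to $G,H\in\pf{A}$, the same integer-invertibility splitting $G+H\equiv_\set{A}(G+\ltp(H))+(H+\overline{\ltp(H)})$ with the two summands' outcomes ($\mathscr{N}$ and $\mathscr{L}$) read off from \cref{thm:tipping-points}, and the same appeal to outcome-stability for the one-sided conclusion $\outcome(G+H)\geq\mathscr{N}$. The paper merely writes out the second case $\ltp(G)<\rtp(H)$ explicitly, splitting by $\ltp(G)$ into the $\mathscr{L}$ summand $G+\overline{\ltp(G)}$ and the $\mathscr{N}$ summand $H+\ltp(G)$, which is exactly your $G\leftrightarrow H$ swap made concrete.

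One caveat concerns the parenthesised symmetric statement (which the paper leaves unproven, by convention): applying the result just proved to $\overline{G}$ and $\overline{H}$ requires $\overline{G},\overline{H}\in\pf{A}[A]$, and $\set{A}$ is \emph{not} assumed conjugate-closed, so that step is not licensed as written. The intended reading is that one interchanges Left and Right throughout the argument: for instance, under $\rtp(G)<\ltp(H)$, split $G+H$ as $(G+\rtp(G))+(H+\overline{\rtp(G)})$, whose summands have outcomes $\mathscr{R}$ and $\mathscr{N}$ by \cref{thm:tipping-points}, so that condition (4) of \cref{def:outcome-stable} gives $\outcomeR(G+H)=\mathscr{R}$, i.e.\ $\outcome(G+H)\leq\mathscr{N}$. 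Alternatively, your conjugation argument can be salvaged by applying the lemma not to $\set{A}$ but to the conjugate monoid $\overline{\set{A}}=\{\overline{X}:X\in\set{A}\}$, which inherits outcome-stability and integer-invertibility since both definitions are Left--Right symmetric.
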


\begin{proof}
    We may assume $G,H\in\pf{A}$. Suppose that $\rtp(G)>\ltp(H)$. Observe that
    \begin{align*}
        \outcome(G+H)&=
        \outcome(\underbrace{G+\ltp(H)}_{\mathscr{N}}+
        \underbrace{H+\overline{\ltp(H)}}_{\mathscr{L}})\\
                     &\geq\mathscr{N}
    \end{align*}
    since $\set{A}$ is outcome-stable and integer-invertible.

    Now suppose that $\ltp(G)<\rtp(H)$. Observe that
    \begin{align*}
        \outcome(G+H)&=
        \outcome(\underbrace{G+\overline{\ltp(G)}}_{\mathscr{L}}+
        \underbrace{H+\ltp(G)}_{\mathscr{N}})\\
                     &\geq\mathscr{N}
    \end{align*}
    since $\set{A}$ is outcome-stable and integer-invertible.
\end{proof}

\begin{lemma}[cf. {\cite[Lemma 14 on p.~8]{milley.renault:invertible}}]
    \label{lem:14}
    If $\set{A}$ is an outcome-stable and integer-invertible monoid of games,
    and $G,H\in\pf{A}[A]$ with $\outcome(G)=\mathscr{L}$ and
    $\outcome(H)=\mathscr{R}$, then we have the following:
    \begin{enumerate}
        \item
            if $\ntp(G)>\ltp(H)$, then $\outcome(G+H)=\mathscr{L}$;
        \item
            if $\ntp(G)>\ntp(H)$ or $\rtp(G)>\ltp(H)$, then
            $\outcome(G+H)\geq\mathscr{N}$;
        \item
            if $\ntp(G)>\ntp(H)$ and $\rtp(G)<\ltp(H)$, then
            $\outcome(G+H)=\mathscr{N}$;
            and
        \item
            if $\ntp(G)<\ntp(H)$ and $\rtp(G)>\ltp(H)$, then
            $\outcome(G+H)=\mathscr{N}$.
    \end{enumerate}
    (The symmetries of (1) and (2) hold for $\outcome(G+H)=\mathscr{R}$ and
    $\outcome(G+H)\leq\mathscr{N}$.)
\end{lemma}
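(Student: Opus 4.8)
The plan is to imitate the proofs of \cref{lem:11,lem:13}. As there, we may assume $G,H\in\pf{A}$ (outcomes and tipping points are invariant modulo $\set{A}$, since $\set{A}$ contains all integers), and then use integer-invertibility to rewrite $G+H\equiv_\set{A}(G+n)+(H+\overline{n})$ for a well-chosen integer $n$, so that both summands remain $\mathscr{P}$-free by \cref{lem:p-free-plus-integer}, have outcomes read off from \cref{thm:tipping-points}, and combine under outcome-stability. Throughout I would keep the two relevant profiles in view: since $\outcome(G)=\mathscr{L}$ we have $\ltp(G)=0$ and $\outcome(G+k)$ equals $\mathscr{L},\mathscr{N},\mathscr{R}$ according as $k<\ntp(G)$, $\ntp(G)\leq k<\rtp(G)$, or $k\geq\rtp(G)$; and since $\outcome(H)=\mathscr{R}$ we have $\rtp(H)=0$ and $\outcome(H+\overline{j})$ equals $\mathscr{R},\mathscr{N},\mathscr{L}$ according as $j<\ntp(H)$, $\ntp(H)\leq j<\ltp(H)$, or $j\geq\ltp(H)$.

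For (1) I would take $n=\ltp(H)$. The hypothesis $\ntp(G)>\ltp(H)$ places $G+\ltp(H)$ in the $\mathscr{L}$-range, while $H+\overline{\ltp(H)}$ has outcome $\mathscr{L}$ since $\ltp(H)\geq\ltp(H)$; outcome-stability condition~(1) then forces $\outcome(G+H)=\mathscr{L}$. For (2) I would treat the two disjuncts by two shifts. If $\ntp(G)>\ntp(H)$, take $n=\ntp(H)$: then $\outcome(G+\ntp(H))=\mathscr{L}$ and $\outcome(H+\overline{\ntp(H)})=\mathscr{N}$, so condition~(3) gives $\outcomeL(G+H)=\mathscr{L}$, i.e.\ $\outcome(G+H)\geq\mathscr{N}$. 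If instead $\rtp(G)>\ltp(H)$, take $n=\ltp(H)$: then $\outcome(H+\overline{\ltp(H)})=\mathscr{L}$, while $\ltp(H)<\rtp(G)$ forces $\outcome(G+\ltp(H))\in\{\mathscr{L},\mathscr{N}\}$, and in either case conditions~(1)/(3) yield $\outcomeL(G+H)=\mathscr{L}$. It is convenient to isolate once the uniform fact that an $\mathscr{L}$-summand added to a $\mathscr{P}$-free summand of outcome $\geq\mathscr{N}$ produces a sum in which Left wins moving first (and symmetrically on the Right).

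Parts (3) and (4) then require no new shifting. By part~(2), the hypothesis $\ntp(G)>\ntp(H)$ (for~(3)) and $\rtp(G)>\ltp(H)$ (for~(4)) each already give $\outcome(G+H)\geq\mathscr{N}$. For the reverse inequality I would invoke the conjugate-symmetric form of~(2): applying part~(2) with $\overline{H},\overline{G}$ in the roles of $G,H$, and translating via $\ntp(\overline{X})=\ntp(X)$, $\rtp(\overline{X})=\ltp(X)$, $\ltp(\overline{X})=\rtp(X)$, one obtains that $\ntp(G)<\ntp(H)$ or $\rtp(G)<\ltp(H)$ implies $\outcome(G+H)\leq\mathscr{N}$. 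Thus the hypotheses of~(3) ($\ntp(G)>\ntp(H)$ and $\rtp(G)<\ltp(H)$) and of~(4) ($\ntp(G)<\ntp(H)$ and $\rtp(G)>\ltp(H)$) each deliver matching $\geq\mathscr{N}$ and $\leq\mathscr{N}$ bounds, forcing $\outcome(G+H)=\mathscr{N}$.

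The computations are routine; the one place to be careful is the bookkeeping of the outcome partial order. One must remember that $\mathscr{P}$ is incomparable with $\mathscr{N}$, so ``$\geq\mathscr{N}$'' means $\mathscr{L}$ or $\mathscr{N}$, and that $\mathscr{P}$ cannot intrude precisely because every shifted game stays $\mathscr{P}$-free. Relatedly, the main obstacle is transcribing the conjugate-symmetric version of~(2) with $\ltp$ and $\rtp$ correctly interchanged: getting these inequalities the wrong way round is the easiest slip, and it is exactly the correct transcription that makes~(3) and~(4) fall out cleanly rather than demanding a fresh case analysis.
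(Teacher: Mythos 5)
Your proposal is correct and follows essentially the same route as the paper's proof: rewrite $G+H\equiv_\set{A}(G+n)+(H+\overline{n})$ via integer-invertibility, read off the summands' outcomes from \cref{thm:tipping-points}, combine them with outcome-stability to get (1) and (2), and then obtain (3) and (4) by pairing (2) with its conjugate-symmetric form to force $\outcome(G+H)=\mathscr{N}$. The only difference is cosmetic—you shift by $H$'s tipping points ($n=\ltp(H)$, $n=\ntp(H)$) where the paper shifts by $G$'s ($n=\ntp(G)-1$, $n=\rtp(G)-1$)—and both choices work.
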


\begin{proof}
    We may assume $G,H\in\pf{A}$. First we prove (1). Observe that
    \begin{align*}
        \outcome(G+H)&=
        \outcome(\underbrace{G+(\ntp(G)-1)}_{\mathscr{L}}+
        \underbrace{H+\overline{\ntp(G)-1}}_{\mathscr{L}})\\
                     &=\mathscr{L}
    \end{align*}
    since $\set{A}$ is outcome-stable and integer-invertible.

    Now we prove (2). Suppose that $\ntp(G)>\ntp(H)$. Observe that
    \begin{align*}
        \outcome(G+H)&=
        \outcome(\underbrace{G+(\ntp(G)-1)}_{\mathscr{L}}+
        \underbrace{H+\overline{\ntp(G)-1}}_{\geq\mathscr{N}})\\
                     &\geq\mathscr{N}
    \end{align*}
    since $\set{A}$ is outcome-stable and integer-invertible.

    Now suppose that $\rtp(G)>\ltp(H)$. Observe that
    \begin{align*}
        \outcome(G+H)&=
        \outcome(\underbrace{G+(\rtp(G)-1)}_{\mathscr{N}}+
        \underbrace{H+\overline{\rtp(G)-1}}_{\mathscr{L}})\\
                     &\geq\mathscr{N}
    \end{align*}
    since $\set{A}$ is outcome-stable and integer-invertible.

    Observe that (3) and (4) follow swiftly by the symmetries of the previous
    case, as both $\outcome(G+H)\geq\mathscr{N}$ and
    $\outcome(G+H)\leq\mathscr{N}$ hold.
\end{proof}

A table summarising the results of this subsection can be found in
\cref{tab:11-14} in \cref{app:tables}.

\subsection{Putting it all together}
\label{sec:combo}

We have gathered most of the tools needed now to prove our main result for
particular sets of $\mathscr{P}$-free games being additively closed. We turn to
a final property that we have not yet been able to eliminate, needed to define
the monoids of interest for our study, before we cover the remaining cases and
bring together the waterfall of lemmas stated thus far.

\begin{definition}[Property X]
    \hypertarget{prop:X}
    If $\set{A}$ is a set of games satisfying the following properties for all
    $G,H\in\pf{A}$ with $\outcome(G),\outcome(H)=\mathscr{N}$, then we say
    $\set{A}$ has \emph{Property X}:
    \begin{enumerate}
        \item
            if $\rtp(G)=\ltp(H)=1$, where $G$ is Left end-like and $H$ is not,
            then $\outcome(G+H)=\mathscr{N}$; and
        \item
            if $\ltp(G)=\rtp(H)=1$, where $H$ is Right end-like and $G$ is not,
            then $\outcome(G+H)=\mathscr{N}$.
    \end{enumerate}
\end{definition}

Note that condition (2) in the above is the symmetric statement to (1). Thus,
in a conjugate-closed monoid, one need only check one of the conditions. Also,
a subset of a set of games satisfying \hyperlink{prop:X}{Property X} must
itself also satisfy \hyperlink{prop:X}{Property X}.

There was no need for such a \hyperlink{prop:X}{Property X} in Milley and
Renault's paper \cite{milley.renault:invertible} on the dead-ending universe
specifically, but we must be careful in this setting of greater generality: in
$\set{E}$, the only end with outcome $\mathscr{N}$ is 0, and so this
\hyperlink{prop:X}{Property X} would be trivially satisfied. We will see later
in \cref{lem:B-prop-X} that the blocking universe $\set{B}$ also has
\hyperlink{prop:X}{Property X}, although it is not as trivial as it was for
$\set{E}$: for example, the Left blocked end $\{\cdot\mid0,1\}$ has outcome
$\mathscr{N}$ but is not equal to 0 (they are distinguished by the game $*$).

\begin{lemma}[cf. {\cite[Theorem 15 on pp.~8--9]{milley.renault:invertible}}]
    \label{thm:final-piece}
    If $\set{A}$ is an outcome-stable, hereditary, and integer-invertible
    monoid that has \hyperlink{prop:X}{Property X}, and $G,H\in\pf{A}[A]$, then
    we have the following:
    \begin{enumerate}
        \item
            if $\outcome(G)=\mathscr{L}$ and $\outcome(H)=\mathscr{N}$, then
            \begin{enumerate}
                \item
                    if $\ntp(G)=\ltp(H)$, then $\outcome(G+H)=\mathscr{L}$; and
                \item
                    if $\rtp(G)=\ltp(H)$, then $\outcome(G+H)=\mathscr{N}$;
            \end{enumerate}
        \item 
            if $\outcome(G)=\mathscr{R}$ and $\outcome(H)=\mathscr{N}$, then
            \begin{enumerate}
                \item
                    if $\ntp(G)=\rtp(H)$, then $\outcome(G+H)=\mathscr{R}$; and
                \item
                    if $\ltp(G)=\rtp(H)$, then $\outcome(G+H)=\mathscr{N}$;
            \end{enumerate}
        \item
            if $\outcome(G),\outcome(H)=\mathscr{N}$ and if $\rtp(G)=\ltp(H)$
            or $\ltp(G)=\rtp(H)$, then $\outcome(G+H)=\mathscr{N}$; and
        \item
            if $\outcome(G)=\mathscr{L}$ and $\outcome(H)=\mathscr{R}$, then
            \begin{enumerate}
                \item
                    if $\ntp(G)=\ltp(H)$, then $\outcome(G+H)=\mathscr{L}$;
                \item
                    if $\ntp(G)=\ntp(H)$ or $\rtp(G)=\ltp(H)$, then
                    $\outcome(G+H)=\mathscr{N}$; and
                \item
                    if $\rtp(G)=\ntp(H)$, then $\outcome(G+H)=\mathscr{R}$.
            \end{enumerate}
    \end{enumerate}
\end{lemma}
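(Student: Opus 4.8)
The plan is to follow the template of \cref{lem:11,lem:13,lem:14} but to push through the \emph{boundary} cases, where a tipping point of $G$ meets a tipping point of $H$ exactly. As in those lemmas I would first reduce to $G,H\in\pf{A}$: the eight conclusions only involve $\outcome(G+H)$ and the tipping points of $G$ and $H$, all of which are invariant modulo $\set{A}$ (the latter because $\set{A}$ contains every integer), so nothing is lost in passing from $\pf{A}[A]$ to strictly $\mathscr{P}$-free representatives. The whole statement is then proved by simultaneous strong induction on the rank of $G+H$. This is well founded because $\birth(G+H)=\birth(G)+\birth(H)$ (ranks of sums add), so $\birth(G'+H)<\birth(G+H)$ whenever $G'$ is an option of $G$; every appeal to the inductive hypothesis below will be to a sum $G'+H$ or $G+H'$ of strictly smaller rank, and in particular no conclusion is ever invoked on a sum of the same rank, so there is no circularity.

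The engine of the argument is the same in every case. For the conclusions asserting $\mathscr{L}$ or $\mathscr{R}$ (namely (1a), (2a), (4a), (4c)), the ``moving-first'' half of the claimed outcome is immediate from outcome-stability (\cref{def:outcome-stable}), so only the ``moving-second'' half needs work; for the $\mathscr{N}$-conclusions ((1b), (2b), (3), (4b)) both $\outcome(G+H)\ge\mathscr{N}$ and $\outcome(G+H)\le\mathscr{N}$ must be established. To show that a given player wins moving second, I would check that \emph{every} opposing option $G'+H$ or $G+H'$ lands in a position that player wins moving first; to show a player wins moving first, I would exhibit \emph{one} of their options landing in a position they win moving second. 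In each such check the moved summand has strictly smaller rank, and its outcome is classified into $\mathscr{L}$, $\mathscr{N}$, $\mathscr{R}$; the equal-tipping-point hypothesis is then converted, via \cref{lem:tech1,lem:tech2,lem:tech3}, into a tipping-point comparison between that option and the stationary summand, which is resolved either by \cref{lem:11,lem:13,lem:14} when it is strict or by the inductive hypothesis of the present lemma when it is again an equality. The symmetric halves of each part follow by conjugation.

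To illustrate on the cleanest case, (1a) with $\ntp(G)=\ltp(H)$: outcome-stability gives $\outcomeL(G+H)=\mathscr{L}$, so it remains to see that Left wins moving second, i.e.\ that every Right option of $G+H$ has outcome $\ge\mathscr{N}$. A Right option $G^R+H$ has $\outcome(G^R)\in\{\mathscr{L},\mathscr{N}\}$ (since $\outcome(G)=\mathscr{L}$, Left wins $G$ moving second and so answers every Right move), and then either outcome-stability (if $\outcome(G^R)=\mathscr{L}$) or---using $\rtp(G^R)\ge\ntp(G)=\ltp(H)$ from \cref{lem:tech3}(2) together with \cref{lem:13} or part~(3) of the inductive hypothesis (if $\outcome(G^R)=\mathscr{N}$)---yields $\outcome(G^R+H)\ge\mathscr{N}$. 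A Right option $G+H^R$ with $\outcome(H^R)=\mathscr{R}$ satisfies $\ntp(H^R)\le\ltp(H)=\ntp(G)$ by \cref{lem:tech1}, so \cref{lem:14}(2) or part~(4b) of the inductive hypothesis gives $\outcome(G+H^R)\ge\mathscr{N}$, while the other outcomes of $H^R$ are immediate from outcome-stability. The remaining seven conclusions are dispatched by the same bookkeeping, summarised in \cref{app:tables}.

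The hard part will be the $\mathscr{N}$-conclusions in which a summand is end-like, and this is exactly what \hyperlink{prop:X}{Property X} was isolated to handle. For instance, in part~(3) with $\rtp(G)=\ltp(H)$, the natural route to $\outcome(G+H)\ge\mathscr{N}$ is to take, by \cref{lem:tech2}, a Left option $G^L$ with $\outcome(G^L)=\mathscr{L}$ and $\ntp(G^L)=\rtp(G)=\ltp(H)$ and apply part~(1a) to $G^L+H$---but this requires $G$ to \emph{not} be Left end-like. In the end-like branch \cref{lem:tech2} instead forces $\rtp(G)=1$, hence $\ltp(H)=1$, and \hyperlink{prop:X}{Property X} supplies $\outcome(G+H)=\mathscr{N}$ directly provided $H$ is not also Left end-like. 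The genuinely delicate residue is when \emph{both} summands are end-like of outcome $\mathscr{N}$: there I would compute the tipping points exactly from \cref{prop:ends-tp,lem:left-end-n-r1}, use that $G+H$ is then itself end-like (a sum of end-likes is end-like, as recorded in \cref{sec:prelims}), and read the outcome off from that structure. Marshalling these end-like branches uniformly across parts (1b), (2b), (3) and (4b), and verifying at each step that the inductive call drops the rank, is where essentially all of the care resides.
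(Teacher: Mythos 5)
Your proposal is correct and takes essentially the same approach as the paper's own proof: the same induction on $\birth(G+H)$, the same engine (outcome-stability for the easy halves, with \cref{lem:tech1,lem:tech2,lem:tech3} converting the equal-tipping-point hypotheses into comparisons that are settled by \cref{lem:11,lem:13,lem:14} when strict and by the inductive hypothesis when again an equality), and the same use of \hyperlink{prop:X}{Property X} together with the fact that a sum of Left end-like games is Left end-like for the delicate branches of part (3). The only differences are cosmetic: the paper also dispatches the trivial base case where $G$ or $H$ equals $0$, and in case (1a) it explicitly checks that $G+H$ cannot be Right end-like before analysing Right's options---one-line verifications that your sketch leaves implicit.
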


\begin{proof}
    We may assume $G,H\in\pf{A}$. It is straightforward to verify the result in
    the case that either $G$ or $H$ is equal to 0, so we now suppose that $G$
    and $H$ are non-zero, and we assume by induction that the result holds for
    all pairs $(G',H')$ where $\birth(G'+H')<\birth(G+H)$.

    \begin{enumerate}
        \item Assume that $\outcome(G)=\mathscr{L}$ and
            $\outcome(H)=\mathscr{N}$. Starting with (a), we assume further
            that $\ntp(G)=\ltp(H)$. We will show that
            $\outcome(G+H)=\mathscr{L}$.

            Since $\set{A}$ is outcome-stable, we know that
            $\outcome(G+H)\geq\mathscr{N}$. Note that $G+H$ cannot be Right
            end-like, since that would imply $G$ and $H$ were both Right
            end-like, contradicting $G$ having outcome $\mathscr{L}$. Suppose,
            for a contradiction, that $\outcomeR(G+H)=\mathscr{R}$; i.e.\
            either $G+H$ has a Right tombstone, or else Right has a winning
            move. Since $G+H$ cannot be Right end-like, we know that Right has
            a winning move. Because $\set{A}$ is outcome-stable, such a move
            must lie in the following cases:
            \begin{enumerate}[i)]
                \item
                    $G^R+H$, where $\outcome(G^R),\outcome(H)=\mathscr{N}$

                    Since $\outcome(G)=\mathscr{L}$, we have by
                    \cref{lem:tech3} that $\rtp(G^R)\geq\ntp(G)$. Since
                    $\ntp(G)=\ltp(H)$ by supposition, we have that
                    $\rtp(G^R)\geq\ltp(H)$. If $\rtp(G^R)>\ltp(H)$, then
                    $\outcome(G^R+H)\geq\mathscr{N}$ by \cref{lem:13}.
                    Otherwise, if $\rtp(G^R)=\ltp(H)$, then
                    $\outcome(G^R+H)=\mathscr{N}$ by induction, so this cannot
                    be a winning option for Right.
                \item
                    $G+H^R$, where $\outcome(G)=\mathscr{L}$ and
                    $\outcome(H^R)=\mathscr{R}$

                    Since $\outcome(H)=\mathscr{N}$, we know by the symmetric
                    result of \cref{lem:tech1} that $\ntp(H^R)\leq\ltp(H)$ for
                    all Right options $H^R$ of $H$ with
                    $\outcome(H^R)\neq\mathscr{L}$. Since $\ntp(G)=\ltp(H)$ by
                    supposition, we have that $\ntp(H^R)\leq\ntp(G)$. If
                    $\ntp(H^R)<\ntp(G)$, then $\outcome(G+H^R)\geq\mathscr{N}$
                    by \cref{lem:14}. Otherwise, if $\ntp(H^R)=\ntp(G)$, then
                    $\outcome(G+H^R)=\mathscr{N}$ by induction, so this also
                    cannot be a winning option for Right. This completes the
                    proof of 1(a).
            \end{enumerate}

            We now proceed with (b). Recall that $\outcome(G)=\mathscr{L}$ and
            $\outcome(H)=\mathscr{N}$, and assume further that
            $\rtp(G)=\ltp(H)$. We will show that $\outcome(G+H)=\mathscr{N}$.

            We will show that Right has a winning option on $G+H$. First we
            demonstrate that $H$ cannot be a Right end. It follows from
            \cref{thm:tipping-points} that $\rtp(G)\geq2$ (since
            $\outcome(G)=\mathscr{L}$). If $H$ were a Right end, then
            $\ltp(H)=1$ by \cref{prop:ends-tp}. This contradicts our
            supposition that $\ltp(H)=\rtp(G)$, and hence $H$ is not a Right
            end.

            Since $\ltp(H)\neq1$, we know by the symmetric result of
            \cref{lem:tech2} that there must exist some option $H^R$ with
            $\outcome(H^R)=\mathscr{R}$ and $\ntp(H^R)=\ltp(H)$. Since
            $\rtp(G)=\ltp(H)$ by supposition, we have that $\ntp(H^R)=\rtp(G)$.
            And now we observe that $\outcome(G+H^R)=\mathscr{R}$ by induction.
            Thus, Right has a good option, and so
            $\outcome(G+H)\leq\mathscr{N}$. But we already know that
            $\outcome(G+H)\geq\mathscr{N}$ since $\set{A}$ is outcome-stable,
            and hence we have that $\outcome(G+H)=\mathscr{N}$ when
            $\rtp(G)=\ltp(H)$, as desired.

        \item It is clear that (2) is symmetric to (1).

        \item Assume $\outcome(G),\outcome(H)=\mathscr{N}$, and further that
            either $\rtp(G)=\ltp(H)$ or $\ltp(G)=\rtp(H)$. We will show that
            $\outcome(G+H)=\mathscr{N}$.

            By symmetry, we need only show that Left wins going first when
            $\rtp(G)=\ltp(H)$. If $G$ and $H$ are both Left end-like, then
            $G+H$ is also Left end-like, and hence Left wins going first on
            $G+H$. If $\rtp(G)=1$ and $G$ is Left end-like but $H$ is not, then
            it follows immediately by $\set{A}$ having
            \hyperlink{prop:X}{Property X} (by hypothesis) that
            $\outcome(G+H)=\mathscr{N}$. Otherwise, we know by \cref{lem:tech2}
            that there must exist some $G^L$ with $\outcome(G^L)=\mathscr{L}$
            and $\ntp(G^L)=\rtp(G)$. Since $\rtp(G)=\ltp(H)$ by supposition, we
            have that $\ntp(G^L)=\ltp(H)$. We then observe that
            $\outcome(G^L+H)=\mathscr{L}$ by induction.

        \item Assume $\outcome(G)=\mathscr{L}$ and $\outcome(H)=\mathscr{R}$.

            Note that $G+H$ cannot be Right end-like, since that would imply
            $G$ and $H$ were both Right end-like, contradicting $G$ having
            outcome $\mathscr{L}$.

            Starting with (a), we assume further that $\ntp(G)=\ltp(H)$. We
            will show that $\outcome(G+H)=\mathscr{L}$.

            It is a simple observation from \cref{thm:tipping-points} that
            $\ltp(H)>\ntp(H)$, and hence $\ntp(G)>\ntp(H)$ here. Thus, by
            \cref{lem:14}, we must have that $\outcome(G+H)\geq\mathscr{N}$,
            and we need only show that Right has no winning move. Since
            $\set{A}$ is outcome-stable, a winning move for Right must fall
            into the following cases:
            \begin{enumerate}[i)]
                \item
                    $G^R+H$, where $\outcome(G^R)=\mathscr{L}$ and
                    $\outcome(H)=\mathscr{R}$;

                    \cref{lem:tech3} yields that $\rtp(G^R)\geq\ntp(G)$ for all
                    options $G^R$, meaning $\rtp(G^R)\geq \ltp(H)$ here. If
                    $\rtp(G^R)>\ltp(H)$, then $\outcome(G^R+H)\geq\mathscr{N}$
                    by \cref{lem:14}. Otherwise, if $\rtp(G^R)=\ltp(H)$, then
                    $\outcome(G^R+H)=\mathscr{N}$ by induction, so this is not
                    a winning option for Right.
                \item
                    $G^R+H$, where $\outcome(G^R)=\mathscr{N}$ and
                    $\outcome(H)=\mathscr{R}$;

                    \cref{lem:tech3} again yields that
                    $\rtp(G^R)\geq\ntp(G)=\ltp(H)$. If $\rtp(G^R)>\ltp(H)$,
                    then $\outcome(G^R+H)=\mathscr{N}$ by the symmetric result
                    of \cref{lem:11}. Otherwise, if $\rtp(G^R)=\ltp(H)$, then
                    $\outcome(G^R+H)=\mathscr{N}$ by induction, so this is not
                    a winning option for Right.
                \item
                    $G+H^R$, where $\outcome(G)=\mathscr{L}$, and
                    $\outcome(H^R)=\mathscr{R}$.

                    By the symmetry of \cref{lem:tech1}, we have that
                    $\ntp(H^R)\leq\ltp(H)$ for all Right options $H^R$ with
                    $\outcome(H^R)\neq\mathscr{L}$. Since $\ntp(G)=\ltp(H)$ by
                    supposition, we have that $\ntp(H^R)\leq\ntp(G)$. If
                    $\ntp(H^R)<\ntp(G)$, then $\outcome(G+H^R)=\mathscr{N}$ by
                    \cref{lem:14}. Otherwise, if $\ntp(H^R)=\ntp(G)$, then
                    $\outcome(G+H^R)=\mathscr{N}$ by induction, so this is not
                    a winning option for Right.
            \end{enumerate}

            We have shown that Right has no good options when
            $\ntp(G)=\ltp(H)$, and so $\outcome(G+H)=\mathscr{L}$ here,
            completing the proof of (a).

            For (b), recall that $\outcome(G)=\mathscr{L}$ and
            $\outcome(H)=\mathscr{R}$, and further that either
            $\ntp(G)=\ntp(H)$ or $\rtp(G)=\ltp(H)$. We will show that
            $\outcome(G+H)=\mathscr{N}$. By symmetry, we need only show that
            Left has a winning move in each case.

            If $\ntp(G)=\ntp(H)$, by the symmetry of \cref{lem:tech3}, there
            must exist some option $H^L$ with $\ltp(H^L)=\ntp(H)=\ntp(G)$. Then
            $\outcome(G+H^L)=\mathscr{L}$ by induction, so Left can win on
            $G+H$.

            For $\rtp(G)=\ltp(H)$, we must consider two cases:
            \begin{enumerate}[i)]
                \item $\ntp(G)=\rtp(G)-1$;

                    By \cref{lem:tech3}'s symmetry, there must exist some
                    option $H^L$ such that $\ltp(H^L)=\ntp(H)$. But now since
                    $\ntp(H)<\ltp(H)=\rtp(G)$ by supposition, it follows that
                    $\ltp(H^L)<\rtp(G)$, and so also $\ltp(H^L)\leq\ntp(G)$. We
                    then observe that $\outcome(G+H^L)=\mathscr{L}$ by
                    induction with either \cref{lem:11} or \cref{lem:14}. Thus,
                    Left has a winning option in this first case.

                \item $\ntp(G)\neq\rtp(G)-1$.

                    By \cref{prop:ends-tp}, we know that $G$ is not a Left end.
                    By \cref{lem:tech3}, there must exist some option $G^L$
                    with $\outcome(G^L)=\mathscr{L}$ and $\ntp(G^L)=\rtp(G)$,
                    which then yields $\ntp(G^L)=\ltp(H)$. Therefore,
                    $\outcome(G^L+H)=\mathscr{L}$ by induction, ensuring Left
                    may win in this case as well.
            \end{enumerate}

            Observe that (c) is symmetric to (a), and so we are done.
    \end{enumerate}
\end{proof}

We have had to equip $\set{A}$ with \hyperlink{prop:X}{Property X} in the
hypothesis of \cref{thm:final-piece} due only to the difficulty of the case
(and its symmetric formulation) where $\outcome(G),\outcome(H)=\mathscr{N}$,
and $G$ is Left end-like and $H$ is not, and $\rtp(G)=\ltp(H)=1$. This is
because of examples such as the following: taking $G=\{\cdot\mid2\}$ and
$H=\{\overline{1}\mid\cdot\}$, it is clear that
$\outcome(G)=\outcome(H)=\mathscr{N}$ and $\rtp(G)=\ltp(H)=1$, with $G$ a Left
end and $H$ not, but $\outcome(G+H)=\mathscr{R}\neq\mathscr{N}$. But such an
example cannot occur in an integer-invertible monoid.\footnote{Davies and Yadav
    showed \cite[Theorem 5.9 on p.~23]{davies.yadav:invertibility} that every
    invertible element of a monoid containing a game such as $\{\cdot\mid2\}$
    (which is called \emph{Left weak}) must be a Left end, and hence 1 is not
    invertible modulo such a monoid, meaning it cannot be integer-invertible.
}
It is not clear whether one can use the other properties in the hypothesis to
exclude all such counter-examples and thus eliminate the need for
\hyperlink{prop:X}{Property X}. (Recall that, in Milley and Renault's proof,
such things need not be considered since it is \emph{trivial} that $\set{E}$
satisfies \hyperlink{prop:X}{Property X}.)

Yet a different pattern exists for the form $G$ given by
\cref{fig:strange-form},
\begin{figure}
    \centering
    \begin{tikzpicture}[scale=0.5]
        \tikzset{dot/.style={circle,draw,fill,inner sep=.8pt}}
        \node[dot] (n0) at (1.5,0) {};
        \node[dot] (n1) at (2,-0.5) {};
        \node[dot] (n2) at (1.5,-1) {};
        \node[dot] (n3) at (1,-1.5) {};
        \node[dot] (n4a) at (.5,-2) {};
        \node[dot] (n4b) at (1.5,-2) {};
        \node[dot] (n5a) at (0,-2.5) {};
        \node[dot] (n5b) at (1,-2.5) {};
        \node[dot] (n5c) at (2,-2.5) {};
        \node[dot] (n6) at (1.5,-3) {};
        \draw(n0)--(n1)--(n2)--(n3)--(n4a)--(n5a);
        \draw(n3)--(n4b);
        \draw(n5b)--(n4b)--(n5c)--(n6);
    \end{tikzpicture}
    \caption{
        The form
        $\left\{\cdot\mid\{\{\{1\mid\{0\mid1\}\}\mid\cdot\}\mid\cdot\}\right\}$.
    }
    \label{fig:strange-form}
\end{figure}
where $\outcome(G)=\mathscr{N}$ and $G$ is $\mathscr{P}$-free. Note that $G$ is
not in the blocking universe, and so will not be covered by our later
\cref{lem:B-outcome-stable}. When added to $H=\{\overline{1}\mid1\}$, we see
$\rtp(G)=\ltp(H)=1$, yet $\outcome(G+H)=\mathscr{R}$. Thus, no set $\set{A}$
containing $G$ and $H$ can have \hyperlink{prop:X}{Property X}. But, like
before, this is overshadowed by the failure of $\set{A}$ to be an
integer-invertible monoid, as $1+\overline{1}+G\not\equiv_\set{A}G$ (simply
compare outcomes). Thus, again, the necessity of \hyperlink{prop:X}{Property X}
remains an open question (see \cref{open:X}).

We state the culmination of what we have shown thus far. We summarize how all
the previous lemmas build up to this result in \cref{tab:11-14,tab:combo} in
\cref{app:tables}.

\begin{lemma}
    \label{thm:pfree-sum}
    If $\set{A}$ is an outcome-stable, hereditary, and integer-invertible
    monoid with \hyperlink{prop:X}{Property X}, and $G,H\in\pf{A}[A]$, then
    $\outcome(G+H)\neq\mathscr{P}$.
\end{lemma}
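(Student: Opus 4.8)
The plan is to follow the reduction used throughout this subsection and assume $G,H\in\pf{A}$, since when both summands lie in $\set{A}$ the outcome $\outcome(G+H)$ depends only on their classes modulo $\set{A}$. As $G$ and $H$ are $\mathscr{P}$-free, neither is a $\mathscr{P}$-position, so $\outcome(G),\outcome(H)\in\{\mathscr{L},\mathscr{N},\mathscr{R}\}$. The organising observation is that $\mathscr{P}$ is the unique outcome \emph{incomparable} with $\mathscr{N}$; hence, to prove $\outcome(G+H)\neq\mathscr{P}$, it suffices to show that $\outcome(G+H)$ is comparable with $\mathscr{N}$, that is, $\outcome(G+H)\geq\mathscr{N}$ or $\outcome(G+H)\leq\mathscr{N}$. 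I would then split on the pair $(\outcome(G),\outcome(H))$, using commutativity and conjugation to reduce to the four configurations $\{\mathscr{L},\mathscr{L}\}$, $\{\mathscr{L},\mathscr{N}\}$, $\{\mathscr{N},\mathscr{N}\}$, and $\{\mathscr{L},\mathscr{R}\}$.

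The two \emph{aligned} configurations fall straight out of \cref{def:outcome-stable}: if $\outcome(G)=\outcome(H)=\mathscr{L}$ then $\outcome(G+H)=\mathscr{L}$, and if $\outcome(G)=\mathscr{L}$, $\outcome(H)=\mathscr{N}$ then $\outcomeL(G+H)=\mathscr{L}$, so Left wins moving first and $\outcome(G+H)\geq\mathscr{N}$; both conclusions exclude $\mathscr{P}$ (and $\{\mathscr{R},\mathscr{R}\}$, $\{\mathscr{R},\mathscr{N}\}$ follow by conjugation).

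The content lies in the two \emph{opposed} configurations, where a single trichotomy settles each. For $\outcome(G)=\outcome(H)=\mathscr{N}$, I would compare $\rtp(G)$ with $\ltp(H)$: if $\rtp(G)>\ltp(H)$ then \cref{lem:13} gives $\outcome(G+H)\geq\mathscr{N}$; if $\rtp(G)<\ltp(H)$ its symmetric form gives $\outcome(G+H)\leq\mathscr{N}$; and if $\rtp(G)=\ltp(H)$ then \cref{thm:final-piece}(3) gives $\outcome(G+H)=\mathscr{N}$. For $\outcome(G)=\mathscr{L}$, $\outcome(H)=\mathscr{R}$, the analogous trichotomy is on $\ntp(G)$ versus $\ntp(H)$: \cref{lem:14}(2) covers $\ntp(G)>\ntp(H)$ with $\outcome(G+H)\geq\mathscr{N}$, its symmetric form covers $\ntp(G)<\ntp(H)$ with $\outcome(G+H)\leq\mathscr{N}$, and \cref{thm:final-piece}(4) covers $\ntp(G)=\ntp(H)$ with $\outcome(G+H)=\mathscr{N}$. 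In every branch the outcome is comparable with $\mathscr{N}$, as required.

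Since the heavy lemmas are already proved, I do not anticipate a genuine mathematical obstacle---the real difficulty has been absorbed into \cref{thm:final-piece}, which is where \hyperlink{prop:X}{Property X} is needed. The hard part here is purely organisational: verifying that the four configurations are exhaustive up to symmetry, invoking each tipping-point lemma with the correct orientation of its symmetric statement, and, crucially, noticing that \cref{lem:13} and \cref{lem:14}(2) each need only one of their disjunctive inequalities, so that a single trichotomy on one pair of tipping points suffices with no nested case analysis.
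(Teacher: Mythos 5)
Your proof is correct and is essentially the paper's own argument: the paper's proof of this lemma is just the one-line statement that \cref{lem:11,lem:13,lem:14,thm:final-piece} ``combine to yield the result'', and your case analysis (reduce to $G,H\in\pf{A}$, split on outcomes up to symmetry, and in each opposed configuration run a single trichotomy on one pair of tipping points, using $\mathscr{P}$ being incomparable with $\mathscr{N}$) is a valid and complete way of carrying out that combination. Your observation that \cref{lem:11} is dispensable---outcome-stability alone gives $\outcome(G+H)\geq\mathscr{N}$ in the $(\mathscr{L},\mathscr{N})$ configuration---is in fact important rather than cosmetic, since \cref{lem:11} together with \cref{thm:final-piece}(1) would leave the regime $\ntp(G)<\ltp(H)<\rtp(G)$ uncovered, so handling that case via \cref{def:outcome-stable} directly, as you do, is exactly the right move.
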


\begin{proof}
    \cref{lem:11,lem:13,lem:14,thm:final-piece} combine to yield the result.
\end{proof}

This now lets us easily prove our main result of the section.

\begin{theorem}
    \label{thm:p-free-closed}
    If $\set{A}$ is an outcome-stable, hereditary, and integer-invertible
    monoid with \hyperlink{prop:X}{Property X}, then $\pf{A}$ is a monoid.
\end{theorem}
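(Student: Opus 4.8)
The plan is to verify the two defining properties of a monoid for $\pf{A}$: that it contains an identity element, and that it is closed under addition. The identity is immediate. Since $\set{A}$ is an integer-invertible monoid it contains $0$, and $0$ is strictly $\mathscr{P}$-free (its unique subposition is itself, which has outcome $\mathscr{N}\neq\mathscr{P}$), so $0\in\pf{A}$ and serves as an identity just as it does in $\set{A}$. The entire content therefore lies in establishing closure under addition.

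To that end, I would fix $G,H\in\pf{A}$ and aim to show $G+H\in\pf{A}$. Membership $G+H\in\set{A}$ is free, since $\set{A}$ is a monoid and hence closed under addition. The remaining task is to show that $G+H$ is strictly $\mathscr{P}$-free, i.e.\ that no subposition of $G+H$ has outcome $\mathscr{P}$. Here I would invoke the structural fact that every subposition of a sum $G+H$ has the form $G'+H'$, where $G'$ is a subposition of $G$ and $H'$ is a subposition of $H$ (each move in a sum is a move in exactly one component); this family includes $G+H$ itself.

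The key step is to argue that each such $G'+H'$ has outcome different from $\mathscr{P}$ by applying \cref{thm:pfree-sum}. To meet its hypotheses on the summands, observe that $\set{A}$ is hereditary, so $G',H'\in\set{A}$; and since $G,H$ are strictly $\mathscr{P}$-free, every one of their subpositions is $\mathscr{P}$-free, giving $G',H'\in\pf{A}$. I would then note the inclusion $\pf{A}\subseteq\pf{A}[A]$, which holds because any strictly $\mathscr{P}$-free form of $\set{A}$ is trivially $\mathscr{P}$-free modulo $\set{A}$ (it is equal to itself); hence $G',H'\in\pf{A}[A]$, and \cref{thm:pfree-sum} applies to give $\outcome(G'+H')\neq\mathscr{P}$. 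As this holds for every subposition of $G+H$, the sum is strictly $\mathscr{P}$-free, whence $G+H\in\pf{A}$, completing the proof of closure.

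I do not expect a serious obstacle here: all of the genuine difficulty has been absorbed into \cref{thm:pfree-sum} and the waterfall of lemmas feeding it, which I am permitted to assume. What remains is purely structural bookkeeping about the subpositions of a sum. The one point meriting a moment's care is the passage from $\pf{A}$ (a condition on the strict form) to $\pf{A}[A]$ (a condition up to equivalence modulo $\set{A}$), since \cref{thm:pfree-sum} is phrased for summands in $\pf{A}[A]$; this is dispatched by the trivial inclusion noted above.
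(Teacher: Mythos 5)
Your proposal is correct and takes essentially the same approach as the paper: both reduce closure under addition to \cref{thm:pfree-sum} applied to pairs of strictly $\mathscr{P}$-free summands (using hereditariness, the trivial inclusion $\pf{A}\subseteq\pf{A}[A]$, and the componentwise decomposition of positions of a sum). The only cosmetic difference is that the paper dispatches the subpositions by induction on options, whereas you quantify directly over all pairs of subpositions $G'+H'$; the substance is identical.
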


\begin{proof}
    Let $G,H\in\pf{A}$. Every option of $G+H$ is necessarily of the form $G'+H$
    or $G+H'$, where $G',H'\in\pf{A}$. Thus, we must have $G'+H,G+H'\in\pf{A}$
    by induction. By \cref{thm:pfree-sum}, we know that
    $\outcome(G+H)\neq\mathscr{P}$, and hence we have the result.
\end{proof}

It will sometimes be more convenient to use the following weaker corollary.

\begin{corollary}
    If $\set{A}$ is an outcome-stable, hereditary, and integer-invertible
    monoid that has \hyperlink{prop:X}{Property X}, then $\pf{A}[A]$ is a
    monoid.
\end{corollary}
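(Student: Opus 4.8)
The plan is to bootstrap directly from \cref{thm:p-free-closed}, which already establishes that the strictly $\mathscr{P}$-free forms $\pf{A}$ constitute a monoid. By \cref{def:p-free-equiv}, the set $\pf{A}[A]$ consists of exactly those $G\in\set{A}$ that are equal modulo $\set{A}$ to some strictly $\mathscr{P}$-free form lying in $\set{A}$. Thus the entire task reduces to transporting closure under addition from the strict forms across the equivalence relation $\equiv_\set{A}$.

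For the identity element, I would first observe that $\pf{A}\subseteq\pf{A}[A]$, since each strictly $\mathscr{P}$-free form is equivalent to itself and so witnesses its own membership in $\pf{A}[A]$. In particular, the identity $0\in\pf{A}$ then also lies in $\pf{A}[A]$, which settles the monoid identity. For closure under addition, I would take $G,H\in\pf{A}[A]$ and fix strictly $\mathscr{P}$-free witnesses $G',H'\in\set{A}$ with $G\equiv_\set{A}G'$ and $H\equiv_\set{A}H'$. Then \cref{thm:p-free-closed} immediately yields $G'+H'\in\pf{A}$, so that $G'+H'$ is a strictly $\mathscr{P}$-free form in $\set{A}$. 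The concluding step is to note that $G+H\in\set{A}$ (as $\set{A}$ is a monoid) and that $G+H\equiv_\set{A}G'+H'$, whence $G+H$ is $\mathscr{P}$-free modulo $\set{A}$ by \cref{def:p-free-equiv}, i.e.\ $G+H\in\pf{A}[A]$.

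The one point requiring care---and the step I expect to be the only (mild) obstacle---is verifying that $\equiv_\set{A}$ respects addition, namely that $G\equiv_\set{A}G'$ and $H\equiv_\set{A}H'$ together imply $G+H\equiv_\set{A}G'+H'$. This follows from the definition of $\geq_\set{A}$ combined with $\set{A}$ being additively closed: for any $X\in\set{A}$ the sum $H+X$ again lies in $\set{A}$, so $\outcome(G+H+X)=\outcome(G'+H+X)$ directly from $G\equiv_\set{A}G'$; repeating the argument in the second coordinate with $H\equiv_\set{A}H'$ and transitivity of $\equiv_\set{A}$ then delivers the claim. With the congruence property in hand, everything else is immediate from the theorem, so the corollary is little more than a reformulation of \cref{thm:p-free-closed} at the level of equivalence classes rather than strict forms.
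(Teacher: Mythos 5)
Your proposal is correct and follows essentially the same route as the paper's own proof: fix strictly $\mathscr{P}$-free witnesses $G'\equiv_\set{A}G$ and $H'\equiv_\set{A}H$, apply \cref{thm:p-free-closed} to get $G'+H'\in\pf{A}$, and conclude $G+H\equiv_\set{A}G'+H'$ because $\equiv_\set{A}$ is a congruence on the additively closed monoid $\set{A}$. The paper compresses the congruence step into the single phrase ``since $\set{A}$ is a monoid,'' which you have simply spelled out in full.
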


\begin{proof}
    Let $G,H\in\pf{A}[A]$. By definition, there must exist games
    $G',H'\in\pf{A}$ with $G'\equiv_\set{A}G$ and $H'\equiv_\set{A}H$. By
    \cref{thm:p-free-closed}, we have $G'+H'\in\pf{A}$. Since $\set{A}$ is a
    monoid, it follows that $G+H\equiv_\set{A}G'+H'$, and hence $G+H$ is
    $\mathscr{P}$-free modulo $\set{A}$.
\end{proof}

\begin{corollary}
    \label{thm:p-free-semigroup}
    If $\set{A}$ is a subsemigroup of an outcome-stable, hereditary, and
    integer-invertible monoid that has \hyperlink{prop:X}{Property X}, then
    either $\pf{A}$ is a semigroup or else $\pf{A}=\emptyset$ (and similarly
    for $\pf{A}[A]$).
\end{corollary}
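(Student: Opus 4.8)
The plan is to \emph{avoid} applying \cref{thm:p-free-closed} to $\set{A}$ itself---which we cannot do, since $\set{A}$ need be neither a monoid nor hereditary---and instead to route everything through the containing monoid, which I will call $\set{C}$. The single observation that makes this work is that $\pf{A}\subseteq\pf{C}$: any strictly $\mathscr{P}$-free element of $\set{A}$ is a strictly $\mathscr{P}$-free element of $\set{C}$, since $\set{A}\subseteq\set{C}$ and strict $\mathscr{P}$-freeness is a property of the form alone, independent of any ambient set.

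First I would handle $\pf{A}$. Take $G,H\in\pf{A}$; then $G,H\in\pf{C}$, and since $\set{C}$ is an outcome-stable, hereditary, integer-invertible monoid with \hyperlink{prop:X}{Property X}, \cref{thm:p-free-closed} gives that $\pf{C}$ is a monoid, so $G+H\in\pf{C}$---that is, $G+H$ is strictly $\mathscr{P}$-free. Because $\set{A}$ is a semigroup we also have $G+H\in\set{A}$, and combining these two facts yields $G+H\in\pf{A}$. Hence $\pf{A}$ is closed under addition; invoking our convention that semigroups are non-empty, $\pf{A}$ is therefore a semigroup whenever it is non-empty and is $\emptyset$ otherwise.

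For $\pf{A}[A]$ the same skeleton works, but one extra step is needed to transport the witness of $\mathscr{P}$-freeness across the equivalence. Given $G,H\in\pf{A}[A]$, choose $G',H'\in\pf{A}$ with $G\equiv_\set{A}G'$ and $H\equiv_\set{A}H'$. By the previous paragraph $G'+H'\in\pf{A}$, so it suffices to show $G+H\equiv_\set{A}G'+H'$ and $G+H\in\set{A}$; the latter is immediate from $\set{A}$ being a semigroup. For the former, I would note that $\equiv_\set{A}$ is preserved under adding a fixed element of $\set{A}$: if $K\in\set{A}$ then $K+X\in\set{A}$ for every $X\in\set{A}$, whence $\outcome(G+K+X)=\outcome(G'+K+X)$ follows from $G\equiv_\set{A}G'$ applied at the test game $K+X$. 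Applying this to $G\equiv_\set{A}G'$ with $K=H$, and to $H\equiv_\set{A}H'$ with $K=G'$, transitivity gives $G+H\equiv_\set{A}G'+H\equiv_\set{A}G'+H'$. Thus $G+H\in\pf{A}[A]$, and the same non-emptiness dichotomy as before completes the argument.

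The content here is genuinely light---\cref{thm:p-free-closed} does all the heavy lifting---so there is no serious obstacle. The one place requiring a moment's care is the congruence step for $\pf{A}[A]$: one must check that $\equiv_\set{A}$ survives the addition of a fixed element of $\set{A}$, and this is precisely where the hypothesis that $\set{A}$ is closed under addition (rather than merely a subset) is used. Everything else is a matter of unwinding the definition of strict $\mathscr{P}$-freeness and the semigroup convention.
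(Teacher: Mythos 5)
Your proposal is correct and takes essentially the same route as the paper: pass to the ambient monoid, apply \cref{thm:p-free-closed} there to conclude that the sum of two strictly $\mathscr{P}$-free forms is strictly $\mathscr{P}$-free, and then use additive closure of $\set{A}$ to pull the conclusion back into $\pf{A}$. Your explicit check that $\equiv_\set{A}$ is preserved under adding a fixed element of $\set{A}$ (using only additive closure, since $\set{A}$ need not be a monoid) is precisely the detail hiding behind the paper's ``(and similarly for $\pf{A}[A]$)'', mirroring the congruence step in the paper's preceding corollary for monoids.
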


\begin{proof}
    Suppose that $\pf{A}\neq\emptyset$. Let $\set{S}\supseteq\set{A}$ be an
    outcome-stable, hereditary, and integer-invertible monoid with
    \hyperlink{prop:X}{Property X}, and let $G,H\in\pf{A}$. Since
    $\set{A}\subseteq\set{S}$, we know that $G,H\in\set{S}$. It then follows
    from \cref{thm:p-free-closed} that $G+H\in\pf{A}$. Since $\set{A}$ is
    additively closed, we obtain $G+H\in\pf{A}$, and hence also the result.
\end{proof}

To see why we need to take care that $\pf{A}$ may be empty, even when $\set{A}$
is not, consider the set of games $\set{A}=\{n\cdot*:n\in\mathbb{N}_1\}$. It is
clear that $(\set{A},\equiv_\set{A})$ is a semigroup. Observe that the
dead-ending universe $\set{E}$ is an outcome-stable, hereditary, and
integer-invertible monoid that has Property $X$ and contains $\set{A}$. But we
can also observe that $\pf{A}=\emptyset$, since $*$, which has outcome
$\mathscr{P}$, is a subposition of every element of $\set{A}$. (In fact,
$\set{A}$ is isomorphic to the group $\mathbb{Z}_2$, and it is
conjugate-closed, so it will also serve as a good example for why we have to
take care that $\pf{A}$ may be empty in some of the later results, too.)

\begin{problem}
    \label{open:X}
    Is \hyperlink{prop:X}{Property X} necessary for \cref{thm:p-free-closed}
    and its \cref{thm:p-free-semigroup}?
\end{problem}

\section{Blocking games: an application}
\label{sec:blocking}

The theory of the previous section would not be much use if we could not apply
it to anything! In this section, we will show that our generalisation of Milley
and Renault's arguments does yield something both stronger and practical. In
particular, we will show that the set of $\mathscr{P}$-free blocking games is
closed under addition (i.e.\ that $\pf{B}$ is a monoid). To do so, we will show
that $\set{B}$ satisfies the hypothesis of \cref{thm:p-free-closed}.

Recall from \cite{davies.mckay.ea:pocancellation} that a Left end $X$ is
\emph{blocked} if for every Right option $X^R$ of $X$, either $X^R$ is a
blocked Left end, or else there exists a Left response $X^{RL}$ that is again a
blocked Left end. A Right end being blocked is defined symmetrically. A game is
then called \emph{blocking} if every subposition that is an end is blocked. The
blocking universe $\set{B}$ is defined as the set of all blocking games (these
are ordinary, not augmented, forms).

We start with proving $\set{B}$ is outcome-stable: the following lemma gives
the base case where one of the summands is a Left end.

\begin{lemma}
    \label{lem:L+end}
    If $G,H\in\pf{B}$, where $\outcome(G)=\mathscr{L}$ and $H$ is a Left end,
    then $\outcome(G+H)=\mathscr{L}$.
\end{lemma}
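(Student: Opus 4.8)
The plan is to induct on the rank $\birth(G+H)$, proving directly that Left wins $G+H$ whichever player moves first. Throughout I use three background facts. First, both $\set{B}$ and the property of being $\mathscr{P}$-free are hereditary, so every subposition of $G$ or $H$ again lies in $\pf{B}$. Second, a $\mathscr{P}$-free game can never have outcome $\mathscr{P}$, so any game appearing below whose outcome is known to lie in $\{\mathscr{L},\mathscr{P}\}$ must in fact have outcome $\mathscr{L}$. Third, since $\outcome(G)=\mathscr{L}$, the form $G$ is not a Right end, so Right always has at least one move in $G+H$, and the $\mathscr{P}$-free Left end $H$ has outcome $\mathscr{L}$ or $\mathscr{N}$ (these observations keep the case analysis clean but are not essential).

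First I would show $\outcomeL(G+H)=\mathscr{L}$. If $G$ is a Left end then $G+H$ is a sum of Left ends, hence itself a Left end, so Left (unable to move) wins going first. Otherwise $\outcomeL(G)=\mathscr{L}$ furnishes a Left option $G^L$ from which Right, to move next, loses; thus $\outcomeR(G^L)=\mathscr{L}$, and so $\outcome(G^L)=\mathscr{L}$ by $\mathscr{P}$-freeness. Left plays $G+H\to G^L+H$, and since $\birth(G^L+H)<\birth(G+H)$, the inductive hypothesis applied to the pair $(G^L,H)$ gives $\outcome(G^L+H)=\mathscr{L}$; in particular Right (to move) loses, as required.

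Next I would show $\outcomeR(G+H)=\mathscr{L}$, splitting on Right's first move. If Right moves in $G$ to $G^R$, then $\outcomeR(G)=\mathscr{L}$ supplies a Left reply $G^R\to G^{RL}$ with $\outcomeR(G^{RL})=\mathscr{L}$, whence $\outcome(G^{RL})=\mathscr{L}$ by $\mathscr{P}$-freeness, and Left wins by induction on $(G^{RL},H)$. The crux is when Right instead opens up the Left end, moving $H\to H^R$ and leaving $G+H^R$ with Left to move. Here I invoke the hypothesis that $H$ is a \emph{blocked} Left end: by definition, either $H^R$ is itself a blocked Left end, or there is a Left reply $H^{RL}$ that is a blocked Left end. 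In the first case, $G+H^R$ is again $G$ (of outcome $\mathscr{L}$) plus a Left end, of strictly smaller rank, so $\outcome(G+H^R)=\mathscr{L}$ by induction and Left (to move) wins. In the second case, Left answers $H^R\to H^{RL}$, reaching $G+H^{RL}$ with $H^{RL}$ a Left end of smaller rank; induction on $(G,H^{RL})$ gives $\outcome(G+H^{RL})=\mathscr{L}$, so Right (to move) loses. Either way Left wins, so $\outcomeR(G+H)=\mathscr{L}$, and combined with the previous paragraph we conclude $\outcome(G+H)=\mathscr{L}$.

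I expect the main obstacle to be exactly this final case: once Right disturbs the Left end $H$, the resulting component $H^R$ need be neither an end nor favourable to Left, so a naive copying strategy stalls and the induction has nothing to grip. The blocking condition is precisely what rescues the argument, guaranteeing that Left can restore a Left-end position in at most one move and thereby return to an instance of the lemma of strictly smaller rank. Everywhere else the work is routine bookkeeping, with $\mathscr{P}$-freeness doing the small but indispensable job of upgrading the intermediate outcomes from $\{\mathscr{L},\mathscr{P}\}$ to $\mathscr{L}$ so that the inductive hypothesis is applicable.
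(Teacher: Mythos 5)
Your proof follows essentially the same route as the paper's: the same induction, the same case analysis on Right's first move, with $\mathscr{P}$-freeness upgrading intermediate outcomes from $\{\mathscr{L},\mathscr{P}\}$ to $\mathscr{L}$ and the blocked-end property of $H$ rescuing the case where Right disturbs the Left end. Those parts are all correct, and your handling of the $H$-component case (splitting directly along the definition of a blocked Left end, rather than the paper's split into ``$H^R$ is a Left end'' versus ``$H^R$ is not'') is an equivalent and equally clean organisation.

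There is, however, one genuine (if small) gap, in the case where Right moves in the $G$-component. You assert that $\outcomeR(G)=\mathscr{L}$ ``supplies a Left reply $G^R\to G^{RL}$ with $\outcomeR(G^{RL})=\mathscr{L}$.'' This is false when $G^R$ is itself a Left end: under the mis\`ere convention, $\outcomeL(G^R)=\mathscr{L}$ can hold for two reasons --- either Left has a winning reply, or Left has \emph{no} move at all and therefore wins --- and your argument covers only the first. The paper splits this case in two (its cases (1) and (2)): if $G^R$ is a Left end, then $G^R+H$ is a sum of Left ends, hence a Left end, and Left, to move, wins immediately; only when $G^R$ is not a Left end does the winning reply $G^{RL}$ exist and the inductive call on $(G^{RL},H)$ go through. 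The repair is exactly that one-line observation, so your proof is easily completed, but as written the step does not go through for that sub-case.
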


\begin{proof}
    Consider Left playing first on $G+H$. If $G$ is a Left end, then Left
    trivially wins first on $G+H$. If $G$ is not a Left end, then, since
    $\outcome(G)=\mathscr{L}$ and $G\in\pf{B}$, there must exist a Left option
    $G^L$ with $\outcome(G^L)=\mathscr{L}$. Now $\outcome(G^L+H)=\mathscr{L}$
    by induction, and so Left wins $G+H$ playing first. It remains to show that
    Left wins playing second.

    When Right plays first, there are four possibilities:
    \begin{enumerate}
        \item
            if Right moves to $G^R+H$ where $G^R$ is a Left end, then Left wins
            immediately;
        \item
            if Right moves to $G^R+H$ where $G^R$ is not a Left end, then,
            since $\outcome(G)=\mathscr{L}$ and $G\in\pf{B}$, there is a
            Left-win $G^{RL}$, and so Left wins by induction moving to
            $G^{RL}+H$;
        \item
            if Right moves to $G+H^R$ where $H^R$ is a Left end, then Left wins
            by induction; and
        \item
            if Right moves to $G+H^R$ where $H^R$ is not a Left end, then,
            since $H\in\set{B}$, there exists a Left response $H^{RL}$ which is
            a Left end, and so Left wins by induction moving to $G+H^{RL}$.
    \end{enumerate}
\end{proof}

\begin{lemma}
    \label{lem:B-outcome-stable}
    The blocking universe $\set{B}$ is outcome-stable.
\end{lemma}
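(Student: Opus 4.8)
The plan is to invoke conjugate-closure to halve the work and then run a single simultaneous induction that defers every ``end'' case to the base case \cref{lem:L+end}. Since $\set{B}$ is a universe it is conjugate-closed, so conditions (2) and (4) of \cref{def:outcome-stable} follow from (1) and (3) respectively by passing to conjugates (as noted immediately after \cref{def:outcome-stable}). It therefore suffices to prove, for all $G,H\in\pf{B}$, that (1) if $\outcome(G)=\outcome(H)=\mathscr{L}$ then $\outcome(G+H)=\mathscr{L}$, and (3) if $\outcome(G)=\mathscr{L}$ and $\outcome(H)=\mathscr{N}$ then $\outcomeL(G+H)=\mathscr{L}$. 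I would prove these two statements together by induction on $\birth(G+H)$, using that $\set{B}$ is hereditary and that every subposition of a $\mathscr{P}$-free form is $\mathscr{P}$-free, so that all the smaller games that arise remain inside $\pf{B}$.

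For (3), I would split on whether $H$ is a Left end. If it is, then \cref{lem:L+end} immediately gives $\outcome(G+H)=\mathscr{L}$, which is more than enough. If $H$ is not a Left end, then $\outcome(H)=\mathscr{N}$ gives $\outcomeL(H)=\mathscr{L}$, so Left has a move $H^L$ that she wins as the second player; since $H$ is $\mathscr{P}$-free this forces $\outcome(H^L)=\mathscr{L}$ (the outcome cannot be $\mathscr{P}$). Left then moves $G+H\to G+H^L$, and the inductive hypothesis for (1) gives $\outcome(G+H^L)=\mathscr{L}$, so Left wins going first, as required.

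For (1), if either summand is a Left end I would again appeal to \cref{lem:L+end}; so assume neither is. For Left moving first, $\outcomeL(G)=\mathscr{L}$ together with $\mathscr{P}$-freeness supplies a Left option $G^L$ with $\outcome(G^L)=\mathscr{L}$, and the inductive hypothesis for (1) makes $G^L+H$ a Left win. For Left moving second, the key observation is that $\outcome(G)=\mathscr{L}$ forces $\outcomeR(G)=\mathscr{L}$, so $G$ is not a Right end and every Right option satisfies $\outcomeL(G^R)=\mathscr{L}$, i.e.\ $\outcome(G^R)\in\{\mathscr{L},\mathscr{N}\}$ (and symmetrically for $H$). Hence after any Right move to $G^R+H$ (or the symmetric $G+H^R$), Left to move wins: if $\outcome(G^R)=\mathscr{L}$ apply the inductive (1), while if $\outcome(G^R)=\mathscr{N}$ apply the inductive (3), with the two summands' roles exchanged, which is legitimate by commutativity of the sum. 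This yields $\outcome(G+H)=\mathscr{L}$.

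I expect the only real subtlety to be bookkeeping the mutual induction: conditions (1) and (3) call one another, but always on a game of strictly smaller birthday, so the recursion is well-founded. The genuinely hard content---controlling the positions in which a player has run out of moves---has already been isolated into \cref{lem:L+end}, which is precisely where the blocking hypothesis does its work; the main argument itself only ever uses $\mathscr{P}$-freeness (to exclude options of outcome $\mathscr{P}$) and the elementary outcome algebra above. I would also record, as a sanity check, that the zero form never arises as $G$ or $H$, since it has outcome $\mathscr{P}$ and is thus excluded by both hypotheses.
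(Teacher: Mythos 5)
Your proof is correct and takes essentially the same route as the paper's: reduce to conditions (1) and (3) of \cref{def:outcome-stable} by conjugate symmetry, induct on birthday, send all Left-end cases to \cref{lem:L+end}, and use $\mathscr{P}$-freeness to extract winning options of outcome $\mathscr{L}$; the only difference is bookkeeping, in that you run a mutual induction in which (1) cites (3) on the pair $(H,G^R)$, while the paper proves (1) self-containedly (handling a Right move to $G+H^R$ with $\outcome(H^R)=\mathscr{N}$ by having Left reply to some $H^{RL}$ of outcome $\mathscr{L}$) and then derives (3) from (1)---the two unroll to the same argument. One side remark of yours is false, though harmlessly so: in mis\`ere play the zero form has outcome $\mathscr{N}$, not $\mathscr{P}$ (a player who cannot move wins), so $H\cong 0$ can perfectly well arise in condition (3); your argument still covers it, since $0$ is a Left end and \cref{lem:L+end} applies.
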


\begin{proof}
    Let $G,H\in\pf{B}$. By symmetry, we may assume that
    $\outcome(G)=\mathscr{L}$. We need to consider two cases: when
    $\outcome(H)=\mathscr{L}$; and when $\outcome(H)=\mathscr{N}$.

    First, suppose that $\outcome(H)=\mathscr{L}$. We need to show that
    $\outcome(G+H)=\mathscr{L}$. If at least one of $G$ and $H$ is a Left end,
    then we know that $\outcome(G+H)=\mathscr{L}$ by \cref{lem:L+end}. As such,
    assume that neither $G$ nor $H$ is a Left end. Since $G$ is
    $\mathscr{P}$-free, we know that there exists some Left option $G^L$ with
    $\outcome(G^L)=\mathscr{L}$. By induction, it is clear that
    $\outcome(G^L+H)=\mathscr{L}$, and hence $\outcomeL(G+H)=\mathscr{L}$. It
    remains to show that $\outcomeR(G+H)=\mathscr{L}$. We may assume, without
    loss of generality, that Right moves to some $G+H^R$. If
    $\outcome(H^R)=\mathscr{L}$, then $\outcome(G+H^R)=\mathscr{L}$ by
    induction. Otherwise, if $\outcome(H^R)=\mathscr{N}$, then either $H^R$ is
    a Left end, in which case $\outcome(G+H^R)=\mathscr{L}$ by
    \cref{lem:L+end}, or else there exists some $H^{RL}$ with
    $\outcome(H^{RL})=\mathscr{L}$, in which case
    $\outcome(G+H^{RL})=\mathscr{L}$ by induction. Thus,
    $\outcomeR(G+H)=\mathscr{L}$.

    We now suppose that $\outcome(H)=\mathscr{N}$. We must show Left wins
    playing first on $G+H$. If $H$ is a Left end, then \cref{lem:L+end} yields
    the result. So, assume $H$ is not a Left end. Since $H$ is
    $\mathscr{P}$-free, there must exist some $H^L$ with
    $\outcome(H^L)=\mathscr{L}$. We know that $\outcome(G+H^L)=\mathscr{L}$ by
    the previous paragraph, and hence we are done.
\end{proof}

It is clear that all integers $n$ and $\overline{n}$ are blocking forms. To
show that $\set{B}$ is integer-invertible, we must show additionally that every
integer is $\set{B}$-invertible. Before we can prove it, we need to recall the
test proved in \cite{davies.milley:order} for being Left $\set{B}$-strong.

\begin{theorem}[{\cite[Theorem 3.1 on p.~9]{davies.milley:order}}]
    \label{thm:strong}
    If $G\in\set{M}$, then $G$ is Left $\set{B}$-strong if and only if either
    \begin{enumerate}
        \item
            $G$ is a Left end; or
        \item
            $G$ has a Left-win option $G^L$ such that $G^L$ and all Right
            responses $G^{LR}$ are Left $\set{B}$-strong.
    \end{enumerate}
\end{theorem}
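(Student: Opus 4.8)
The plan is to prove both directions by induction, with the reverse direction (the recursive condition implies Left $\set{B}$-strength) being the easier half and the forward direction being the crux. Throughout I keep in mind the misère convention that a player unable to move wins, so that a Left end $X$ has $\outcomeL(X)=\mathscr{L}$, and that a sum is a Left end exactly when both summands are; here ``Left-win option $G^L$'' means $\outcomeR(G^L)=\mathscr{L}$, i.e.\ moving to $G^L$ strands Right.

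For the reverse direction, condition~(1) is immediate: if $G$ and $X$ are both Left ends then so is $G+X$, and Left (to move) wins for lack of a move, giving $\outcomeL(G+X)=\mathscr{L}$. For condition~(2) the key is an auxiliary lemma: \emph{if $H$ is a Left-win position that is Left $\set{B}$-strong and all of whose Right options $H^R$ are Left $\set{B}$-strong, then $\outcomeR(H+X)=\mathscr{L}$ for every Left end $X\in\set{B}$.} I would prove this by induction on the rank of $X$. When Right opens in $H$ to $H^R+X$, Left wins because $H^R$ is Left $\set{B}$-strong and $X$ is a Left end in $\set{B}$; when Right opens in $X$ to $H+X^R$, the definition of a blocked end says that $X^R$ is either itself a blocked Left end---whence $H$ being Left $\set{B}$-strong finishes it---or admits a Left response $X^{RL}$ to a blocked Left end of strictly smaller rank, to which Left moves and invokes the induction hypothesis; the base case $X=0$ is exactly the Left-win hypothesis $\outcomeR(H)=\mathscr{L}$. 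Granting this, Left's strategy on $G+X$ under condition~(2) is simply to move $G\to G^L$, landing in $G^L+X$ with Right to move, where the lemma gives the win; hence $\outcomeL(G+X)=\mathscr{L}$ and $G$ is Left $\set{B}$-strong.

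For the forward direction I would argue the contrapositive: assuming $G$ is not a Left end and that no Left-win option $G^L$ has both $G^L$ and all $G^{LR}$ Left $\set{B}$-strong, I must produce a single Left end $X\in\set{B}$ with $\outcomeL(G+X)=\mathscr{R}$. Taking $X=0$ first shows $\outcomeL(G)=\mathscr{L}$, so Left-win options certainly exist; the difficulty is that each is defective. For every Left option I would record a blocked-Left-end witness of failure---either a Left end $A\in\set{B}$ with $\outcomeL(G^L+A)=\mathscr{R}$ (when $G^L$ is not Left $\set{B}$-strong), or one witnessing that some $G^{LR}$ is not Left $\set{B}$-strong---and then glue these witnesses into one test game, taking $X$ to be a Left end whose Right options deliver Right to the appropriate witness, so that whichever option Left selects from $G+X$, Right has a matching winning continuation.

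The hard part will be this gluing. Three things must be controlled at once: the constructed $X$ must actually lie in $\set{B}$, so that every end-subposition it creates (including any Right ends manufactured by the gluing) satisfies the blocked condition in \emph{both} directions; it must defeat \emph{every} Left option of $G$, not only the Left-win ones; and---most delicately---it must do so despite the non-monotonicity of misère addition, where appending a Left end can hand Left extra tempo in the endgame. This last point is where the blocked structure earns its keep: the defining property that a stranded player whose opponent opens a move can retreat to an end is exactly what lets Right shepherd play back to the recorded witnesses without surrendering the parity. I would verify membership in $\set{B}$ and the defeat of each option type by a careful case analysis, using the theorem's own inductive hypothesis on the options of $G$ together with the closure of $\set{B}$ under subpositions. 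One might alternatively try to route the forward direction through the proviso in the comparison machinery of \cref{thm:comparison}, but I expect the direct end-construction to be the more natural path, since the statement concerns precisely the end structure peculiar to $\set{B}$.
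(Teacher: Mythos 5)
First, note that the paper does not prove \cref{thm:strong} at all: it imports the result from Davies and Milley \cite{davies.milley:order}, so your attempt can only be judged on its own terms rather than against an in-paper argument.

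Your reverse direction is correct and complete. The auxiliary lemma, proved by induction on the rank of the Left end $X$, is exactly the right mechanism: Right's moves in the $H$-component are answered by Left $\set{B}$-strength of the $H^R$, Right's moves in the $X$-component are answered using the blocked condition (either $X^R$ is a Left end of $\set{B}$, or Left retreats to $X^{RL}$ and the induction applies), and the Left-win hypothesis $\outcomeR(H)=\mathscr{L}$ correctly rules out Right ever winning by immobility.

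The forward direction, however, has a genuine gap, and it is not merely that details are deferred: the gluing scheme as described would fail. Two concrete problems. First, your taxonomy of witnesses is incomplete. A Left option can fail condition (2) \emph{solely} because it is not a Left-win, while being Left $\set{B}$-strong with all Right responses Left $\set{B}$-strong; for example $G^L=\{\overline{1}\mid\cdot\}$, a Right end which is Left $\set{B}$-strong by your own reverse direction. For such an option neither kind of recorded witness exists. Worse, no Left end $X$ all of whose Right options are Left ends of $\set{B}$ can defeat it: Right, to move in $G^L+X$ with $X\neq 0$, must move inside $X$ (since $G^L$ is a Right end), and any move to a Left end $X^R\in\set{B}$ hands Left the win precisely because $G^L$ is Left $\set{B}$-strong. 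Concretely, for $G=\bigl\{\{\overline{1}\mid\cdot\},\,\{\{*\mid\cdot\}\mid\overline{1}\}\mid\cdot\bigr\}$ the first option forces any defeating $X$ to be nonzero with a non-Left-end Right option, while the second option rules out $X=0$; a valid witness is $X=\{\cdot\mid *,0\}$, whose essential Right option $*$ is not a Left end and is not a witness of either kind your scheme records. Second, for the failure type ``some $G^{LR}$ is not Left $\set{B}$-strong'' (with $G^L$ itself Left $\set{B}$-strong), the recorded witness $B$ satisfying $\outcomeL(G^{LR}+B)=\mathscr{R}$ is unreachable in play: if Right moves $X\to B$ he lands in $G^L+B$, which Left wins because $G^L$ is Left $\set{B}$-strong and $B$ is a Left end of $\set{B}$; if instead Right moves $G^L\to G^{LR}$ he lands in $G^{LR}+X$, not $G^{LR}+B$. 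So ``Right options deliver Right to the appropriate witness'' cannot be implemented literally. In both cases the Right options of $X$ must be non-end gadgets (positions like $*$, or $\{0\mid\cdot\,\}$-style traps) constructed recursively through the game tree of $G$ and interleaved with the inductive hypothesis of the theorem itself; this construction is the actual crux of the result, and it is exactly the part your proposal leaves open.
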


\begin{lemma}
    \label{thm:B-pfree-strong}
    If $G\in\pf{B}[B]$ and $\outcome(G)\neq\mathscr{R}$, then $G$ is Left
    $\set{B}$-strong.
\end{lemma}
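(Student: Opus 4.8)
The plan is to induct on the rank $\birth(G)$ and apply the structural test for being Left $\set{B}$-strong provided by \cref{thm:strong}. First I would observe that being Left $\set{B}$-strong is a property of the equivalence class modulo $\set{B}$: since $\outcomeL(G+X)=\mathscr{L}$ is equivalent to $\outcome(G+X)\in\{\mathscr{L},\mathscr{N}\}$, and $G\equiv_\set{B}H$ forces $\outcome(G+X)=\outcome(H+X)$ for every (Left end) $X\in\set{B}$, a game $G$ is Left $\set{B}$-strong if and only if any representative of its class is. Hence I may replace $G$ by a strictly $\mathscr{P}$-free representative and assume outright that $G\in\pf{B}$. Because $\set{B}$ is a (hereditary) universe and $G$ is strictly $\mathscr{P}$-free, every option and every subposition of $G$ again lies in $\pf{B}$, so the inductive hypothesis will be available to them.

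With $G\in\pf{B}$ in hand, I would note that strict $\mathscr{P}$-freeness rules out $\outcome(G)=\mathscr{P}$, so together with the hypothesis $\outcome(G)\neq\mathscr{R}$ we have $\outcome(G)\in\{\mathscr{L},\mathscr{N}\}$; in particular Left wins $G$ moving first. If $G$ is a Left end, then clause (1) of \cref{thm:strong} immediately gives that $G$ is Left $\set{B}$-strong, which handles the base case and more. Otherwise Left has a genuine option, and her winning first move lands on some $G^L$ with $\outcomeR(G^L)=\mathscr{L}$, i.e.\ $\outcome(G^L)\in\{\mathscr{L},\mathscr{P}\}$; as $G^L\in\pf{B}$ this forces $\outcome(G^L)=\mathscr{L}$.

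The key step is then to verify the two strength conditions in clause (2) of \cref{thm:strong} for this $G^L$. Since $\outcome(G^L)=\mathscr{L}\neq\mathscr{R}$ and $G^L\in\pf{B}$ has smaller rank, the induction hypothesis gives that $G^L$ is Left $\set{B}$-strong. For the Right responses, I would exploit that $\outcome(G^L)=\mathscr{L}$ means Left wins $G^L$ no matter who starts: for every Right option $G^{LR}$, Left (now to move) still wins, so $\outcomeL(G^{LR})=\mathscr{L}$ and therefore $\outcome(G^{LR})\in\{\mathscr{L},\mathscr{N}\}$, in particular $\outcome(G^{LR})\neq\mathscr{R}$. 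Each such $G^{LR}\in\pf{B}$ has smaller rank, so the induction hypothesis again applies and every $G^{LR}$ is Left $\set{B}$-strong. Clause (2) of \cref{thm:strong} is thus satisfied, and $G$ is Left $\set{B}$-strong.

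I expect the only delicate points to be the two bookkeeping observations that feed the induction: that $\mathscr{P}$-freeness upgrades the winning option $G^L$ from outcome in $\{\mathscr{L},\mathscr{P}\}$ to exactly $\mathscr{L}$, and that the equality $\outcome(G^L)=\mathscr{L}$ is precisely what forces every Right response $G^{LR}$ away from outcome $\mathscr{R}$ so that the inductive hypothesis becomes applicable to them. The reduction to a strictly $\mathscr{P}$-free representative at the outset is what lets us speak of concrete options $G^L$ and $G^{LR}$ at all, and it is harmless exactly because the target property is invariant modulo $\set{B}$.
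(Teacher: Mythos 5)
Your proof is correct and is essentially the same argument as the paper's: reduce to a strictly $\mathscr{P}$-free representative, then induct using \cref{thm:strong}, picking a Left option $G^L$ of outcome $\mathscr{L}$ (using $\mathscr{P}$-freeness to exclude $\mathscr{P}$) and noting that every $G^{LR}$ has outcome at least $\mathscr{N}$ so the inductive hypothesis applies to $G^L$ and all $G^{LR}$. The only difference is presentational: you spell out the class-invariance of Left $\set{B}$-strength justifying the reduction to $\pf{B}$, which the paper leaves implicit.
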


\begin{proof}
    We may assume $G\in\pf{B}$. We proceed according to \cref{thm:strong}. If
    $G$ is a Left end, then we are done. So, assume that $G$ is not a Left end.
    Since $G$ is $\mathscr{P}$-free and $\outcome(G)\neq\mathscr{R}$, we know
    that there exists some Left option $G^L$ with $\outcome(G^L)=\mathscr{L}$.
    Since every subposition of $G$ must be $\mathscr{P}$-free, we know by
    induction that $G^L$ is Left $\set{B}$-strong. Since
    $\outcome(G^L)=\mathscr{L}$, we know that $\outcome(G^{LR})\geq\mathscr{N}$
    for all Right options $G^{LR}$. Hence, again by induction, we know that
    every $G^{LR}$ is Left $\set{B}$-strong. Thus, by \cref{thm:strong}, we may
    conclude that $G$ is Left $\set{B}$-strong.
\end{proof}

\begin{lemma}
    \label{lem:B-int-invertible}
    The blocking universe $\set{B}$ is integer-invertible.
\end{lemma}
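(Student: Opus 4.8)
The plan is to reduce to a single equivalence and then verify it with the comparison theorem. Since $\set{B}$ is a universe (hence a semigroup) and every integer is a blocking form, the remark immediately following \cref{def:integral} tells us that $\set{B}$ is integer-invertible if and only if $1+\overline{1}\equiv_\set{B}0$. So the entire task reduces to establishing $1+\overline{1}\equiv_\set{B}0$, which I would prove by showing $1+\overline{1}\geq_\set{B}0$ and $0\geq_\set{B}1+\overline{1}$ separately, via two applications of \cref{thm:comparison} with $\set{U}=\set{B}$.

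Before invoking the comparison theorem, I would record a few easy facts about $G_{0} := 1+\overline{1}$. Its only Left option is $\overline{1}$ and its only Right option is $1$; moreover $\overline{1}=\{\cdot\mid0\}$ has a Right option to $0$, and $1=\{0\mid\cdot\}$ has a Left option to $0$. A direct computation gives $\outcome(G_{0})=\mathscr{N}$, and since the only subpositions are $G_{0}$, $1$, $\overline{1}$, and $0$ (of outcomes $\mathscr{N}$, $\mathscr{R}$, $\mathscr{L}$, and $\mathscr{N}$ respectively), none has outcome $\mathscr{P}$, so $G_{0}\in\pf{B}$ (and $G_0$ is readily seen to be blocking). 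These option facts make the \emph{maintenance} conditions of \cref{thm:comparison} trivial in both directions: every relevant option reverses through $0$, so each maintenance obligation collapses to the tautology $0\geq_\set{B}0$.

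The crux is the \emph{proviso}, and this is exactly what the preceding lemma delivers. Since $G_{0}\in\pf{B}$ with $\outcome(G_{0})=\mathscr{N}\neq\mathscr{R}$, \cref{thm:B-pfree-strong} shows $G_{0}$ is Left $\set{B}$-strong; by the conjugate of that lemma (noting $\outcome(G_{0})=\mathscr{N}\neq\mathscr{L}$), $G_{0}$ is also Right $\set{B}$-strong. For $1+\overline{1}\geq_\set{B}0$, the right-hand side $H=0$ is a Left end, so condition~(3) requires $G_{0}$ to be Left $\set{B}$-strong, which holds, while $G_{0}$ is not a Right end, so condition~(4) is vacuous. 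For $0\geq_\set{B}1+\overline{1}$, the left-hand side $G=0$ is a Right end, so condition~(4) requires $G_{0}$ to be Right $\set{B}$-strong, which holds, while $G_{0}$ is not a Left end, so condition~(3) is vacuous. Combined with the trivial maintenance checks, this yields both inequalities, and hence $1+\overline{1}\equiv_\set{B}0$.

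I expect the only genuine obstacle to be the proviso — showing $1+\overline{1}$ is both Left and Right $\set{B}$-strong — and this has already been isolated and dispatched by \cref{thm:B-pfree-strong} (itself resting on the structural test \cref{thm:strong}); the remaining maintenance and endpoint bookkeeping is routine.
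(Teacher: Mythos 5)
Your proposal is correct and follows essentially the same route as the paper: both verify $n+\overline{n}\equiv_\set{B}0$ via \cref{thm:comparison}, with the maintenance conditions collapsing trivially (every option reverses through $0$) and the proviso supplied exactly by \cref{thm:B-pfree-strong} (and its conjugate). The only cosmetic difference is that you first reduce to the single case $1+\overline{1}\equiv_\set{B}0$ using the remark after \cref{def:integral}, whereas the paper treats general $n$ directly and discharges the maintenance by induction.
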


\begin{proof}
    We know that $n,\overline{n}\in\set{B}$ for all $n$. It remains to show
    that $n+\overline{n}\equiv_\set{B}0$. Following \cref{thm:comparison}, the
    maintenance is satisfied by induction, and the proviso is given by
    \cref{thm:B-pfree-strong}.
\end{proof}

Complementary to \cref{lem:left-end-n-r1}, it is interesting to note that a
$\mathscr{P}$-free blocked Left end $G$ with outcome $\mathscr{N}$ must also
have $\ltp(G)=1$, although we will not need it here.

\begin{proposition}
    If $G\in\pf{B}$ is a Left end with $\outcome(G)=\mathscr{N}$, then
    $\ltp(G)=1$.
\end{proposition}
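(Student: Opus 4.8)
The plan is to reduce the proposition to the single fact that $\outcome(G+\overline{1})=\mathscr{L}$. Since $\outcome(G)=\mathscr{N}$, we have $\outcome(G+\overline{0})=\outcome(G)\neq\mathscr{L}$, so $\ltp(G)\neq0$ by \cref{def:tipping-points}; hence it suffices to show $\outcome(G+\overline{1})=\mathscr{L}$, which then forces $\ltp(G)=1$. As $G$ and $\overline{1}$ are both Left ends, their sum $G+\overline{1}$ is a Left end, so $\outcomeL(G+\overline{1})=\mathscr{L}$ comes for free (Left, unable to move, wins under mis\`ere play). The entire content is therefore to establish $\outcomeR(G+\overline{1})=\mathscr{L}$, i.e.\ that Left wins $G+\overline{1}$ with Right moving first.

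I would prove this through an auxiliary claim, by induction on rank: for every Left end $K\in\set{B}$, Left wins $K+\overline{1}$ with Right to move, i.e.\ $\outcomeR(K+\overline{1})=\mathscr{L}$. With Right to move on $K+\overline{1}$, Right's options are either to collapse $\overline{1}$ to $0$, reaching the Left end $K$, on which Left (to move, and unable to move) wins; or to move $K$ to some $K^R$, reaching $K^R+\overline{1}$ with Left to move. So the claim reduces to showing $\outcomeL(K^R+\overline{1})=\mathscr{L}$ for every Right option $K^R$.

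Here the blocking hypothesis does the work. Since $K$ is a blocked Left end, for each $K^R$ either $K^R$ is itself a Left end, in which case $K^R+\overline{1}$ is again a Left end and Left (unable to move) wins outright; or else there is a Left response $K^{RL}$ that is again a blocked Left end, in which case Left answers on $K^R+\overline{1}$ by moving to $K^{RL}+\overline{1}$, handing the move back to Right. Because blocking passes to subpositions, $K^{RL}\in\set{B}$ is a Left end of strictly smaller rank, so the induction hypothesis gives $\outcomeR(K^{RL}+\overline{1})=\mathscr{L}$, a win for Left. This proves the auxiliary claim. Applying it to $G$ (noting $\pf{B}\subseteq\set{B}$) yields $\outcomeR(G+\overline{1})=\mathscr{L}$, hence $\outcome(G+\overline{1})=\mathscr{L}$ and $\ltp(G)=1$.

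I expect the main obstacle to be precisely the case in the induction where the Right option $K^R$ is not itself a Left end. This is where the blocking property is indispensable: it guarantees a Left response back to a blocked Left end, which is exactly the configuration the induction is built to exploit---Left keeps returning the game to a Left end while holding the spare $\overline{1}$ in reserve, and Right's only way to discharge that $\overline{1}$ merely hands the move back to Left on a Left end. This step also explains why the hypothesis is restricted to $\set{B}$ rather than matching the more permissive \cref{lem:left-end-n-r1} (whose $\mathscr{R}$-tipping-point analogue needed no blocking): without the blocking property there need be no such Left response, and the reserved $\overline{1}$ could instead furnish Right a genuine extra resource, pushing $\ltp(G)$ above $1$.
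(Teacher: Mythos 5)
Your proof is correct and takes essentially the same route as the paper's: Right's moves on $G+\overline{1}$ either hand Left a Left end outright or, via the blocking property, are answered by a Left move back to a blocked Left end, which closes an induction on rank. Your explicitly stated auxiliary claim---that $\outcomeR(K+\overline{1})=\mathscr{L}$ for every blocked Left end $K\in\set{B}$, with no $\mathscr{P}$-freeness or outcome hypothesis needed---is precisely the statement on which the paper's more condensed induction implicitly runs, so the two arguments coincide.
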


\begin{proof}
    It is clear that $\outcome(G+\overline{1})\geq\mathscr{N}$. Playing first
    on $G+\overline{1}$, if Right moves to $G$, then Left wins immediately.
    Otherwise, Right plays to some $G^R+\overline{1}$. If $G^R$ is a Left end,
    then Left wins immediately. Otherwise, since $G$ is a blocked Left end,
    Left can play on $G^R$ to a blocked Left end $G^{RL}$, and hence wins by
    induction. Thus, $\outcome(G+\overline{1})=\mathscr{L}$, yielding the
    result.
\end{proof}

To prove that $\set{B}$ has \hyperlink{prop:X}{Property X}, we will first
establish a short lemma.

\begin{lemma}
    \label{lem:left-end-plus-N}
    If $G,H\in\pf{B}$, where $G$ is a Left end with
    $\outcome(G),\outcome(H)=\mathscr{N}$, then $\outcomeR(G+H)=\mathscr{R}$.
\end{lemma}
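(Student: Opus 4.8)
The plan is to prove a slightly more general statement by induction and then specialise. Since $\outcome(H)=\mathscr{N}$ forces $\outcomeR(H)=\mathscr{R}$, I would instead establish: if $G\in\pf{B}$ is a Left end with $\outcome(G)=\mathscr{N}$ and $K\in\pf{B}$ satisfies $\outcomeR(K)=\mathscr{R}$, then $\outcomeR(G+K)=\mathscr{R}$; taking $K=H$ recovers the lemma. The induction would run on the rank of $K$, holding $G$ fixed. Two facts get used repeatedly. First, since $G$ is a Left end with $\outcome(G)=\mathscr{N}$, it is not also a Right end (else $G=0$, which has outcome $\mathscr{P}$), so $\outcomeR(G)=\mathscr{R}$ furnishes a Right option $G^R$ with $\outcomeL(G^R)=\mathscr{R}$; as $G^R$ is $\mathscr{P}$-free, this promotes to $\outcome(G^R)=\mathscr{R}$. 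Second, whenever $\outcomeR(K)=\mathscr{R}$ and $K$ is not a Right end, Right has a reply $K^R$ with $\outcomeL(K^R)=\mathscr{R}$, and $\mathscr{P}$-freeness again gives $\outcome(K^R)=\mathscr{R}$.

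The inductive step splits on whether $K$ is a Right end. Suppose it is not. Then Right moves in $K$ to the option $K^R$ above, reaching $G+K^R$. Because $G$ is a Left end and $K^R$ is not (a game of outcome $\mathscr{R}$ cannot be a Left end), every Left reply has the form $G+K^{RL}$, and $\outcome(K^R)=\mathscr{R}$ guarantees $\outcomeR(K^{RL})=\mathscr{R}$ for each such reply. As $K^{RL}\in\pf{B}$ has strictly smaller rank than $K$, the induction hypothesis yields $\outcomeR(G+K^{RL})=\mathscr{R}$, and (noting Left is never stuck, since $K^R$ has Left options) Right wins moving first on $G+K$.

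Suppose instead that $K$ is a Right end. Now Right plays the option $G^R$ in the $G$-component, reaching $G^R+K$ with $\outcome(G^R)=\mathscr{R}$ and $K$ a Right end. Here I would invoke the conjugate of \cref{lem:L+end}, which is legitimate because $\set{B}$, and hence $\pf{B}$, is conjugate-closed (outcome $\mathscr{P}$ is preserved under conjugation): a $\mathscr{P}$-free game of outcome $\mathscr{R}$ added to a $\mathscr{P}$-free Right end has outcome $\mathscr{R}$. Thus $\outcome(G^R+K)=\mathscr{R}$, so Left loses moving first and Right wins, closing the induction.

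I expect the Right-end case to be the main obstacle: there the relevant configuration is $\mathscr{R}$ added to an $\mathscr{N}$-outcome Right end, where \cref{def:outcome-stable} only controls $\outcomeR(G^R+K)$, whereas what we actually need is $\outcomeL(G^R+K)=\mathscr{R}$. The resolution is to recognise this position as precisely the conjugate of \cref{lem:L+end}, rather than trying to push outcome-stability. A secondary subtlety, pervasive throughout, is the recurring promotion of ``the mover loses'' (i.e.\ $\outcomeL=\mathscr{R}$) to a genuine outcome of $\mathscr{R}$ via $\mathscr{P}$-freeness; this is exactly what keeps every reduction inside $\pf{B}$ and makes the rank induction well-founded.
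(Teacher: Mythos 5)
Your argument contains one genuine error, and it is a distinctly misère one: you exclude the possibility that $G$ is a Right end by asserting that $0$ has outcome $\mathscr{P}$. That is the normal-play convention. Under misère play a player with no available move \emph{wins}, so $\outcome(0)=\mathscr{N}$, and $0$ is $\mathscr{P}$-free --- the paper relies on this explicitly (it notes that $0$ is $\mathscr{P}$-free just after \cref{lem:p-free-plus-integer}, and that $\ntp(G)=0$ when $G$ is the zero form). Hence $G\cong 0$ satisfies every hypothesis of the lemma and of your generalised statement, and your induction must cover it; as written it does not, because your second case ($K$ a Right end) opens with Right playing a Right option $G^R$ of $G$, which does not exist when $G\cong 0$. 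The repair is immediate: if $G$ is a Right end then $G\cong 0$, so $G+K\cong K$ and $\outcomeR(G+K)=\outcomeR(K)=\mathscr{R}$ is exactly your hypothesis on $K$. This is precisely how the paper's own proof opens (``If $G$ is a Right end, then $G\cong0$ and the conclusion is immediate''), so this is not a vacuous case the authors skipped; it is a real case, just a trivial one.

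With that one-line patch your proof is correct, and it follows the same skeleton as the paper's: when $K$ (the paper's $H$) is not a Right end, Right's winning move is to $G+K^R$ and Left's forced replies $G+K^{RL}$ are handled recursively; when $K$ is a Right end, Right moves to $G^R+K$ and the conjugate of \cref{lem:L+end} finishes. The one genuine improvement is your strengthened induction hypothesis $\outcomeR(K)=\mathscr{R}$ in place of $\outcome(K)=\mathscr{N}$: it absorbs the replies $K^{RL}$ of outcome $\mathscr{R}$ and those of outcome $\mathscr{N}$ into a single inductive call, whereas the paper must treat them separately, invoking outcome-stability (\cref{lem:B-outcome-stable}) for the $\mathscr{R}$ replies and induction for the $\mathscr{N}$ ones. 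Your formulation therefore removes this lemma's dependence on \cref{lem:B-outcome-stable} altogether, at the modest cost of proving a slightly more general statement.
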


\begin{proof}
    If $G$ is a Right end, then $G\cong0$ and the conclusion is immediate.
    Otherwise, there exists some $G^R$ with $\outcome(G^R)=\mathscr{R}$. If $H$
    is a Right end, then $\outcome(G^R+H)=\mathscr{R}$ by the symmetry of
    \cref{lem:L+end}, which gives the result. Suppose now that neither $G$ nor
    $H$ is a Right end.

    There must exist some $H^R$ with $\outcome(H^R)=\mathscr{R}$. Since $G$ is
    a Left end, Left must respond to $G+H^R$ with an option of the form
    $G+H^{RL}$, where $\outcome(H^{RL})\leq\mathscr{N}$. If
    $\outcome(H^{RL})=\mathscr{R}$, then we are done since $\set{B}$ is
    outcome-stable (\cref{lem:B-outcome-stable}). Otherwise, if
    $\outcome(H^{RL})=\mathscr{N}$, then $\outcomeR(G+H^{RL})=\mathscr{R}$ by
    induction, and hence $\outcomeR(G+H)=\mathscr{R}$.
\end{proof}

\begin{lemma}
    \label{lem:B-prop-X}
    The blocking universe has \hyperlink{prop:X}{Property X}.
\end{lemma}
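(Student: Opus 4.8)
The plan is to reduce to condition (1) of \hyperlink{prop:X}{Property X} and then split into the two players moving first. Since $\set{B}$ is a universe, it is conjugate-closed, so (as remarked after the definition of \hyperlink{prop:X}{Property X}) it suffices to verify condition (1); condition (2) then follows by applying (1) to the conjugates $\overline{H},\overline{G}$ and using that conjugation swaps the $\mathscr{L}$- and $\mathscr{R}$-tipping points and fixes the outcome $\mathscr{N}$. So I would fix $G,H\in\pf{B}$ with $\outcome(G)=\outcome(H)=\mathscr{N}$, where $G$ is a Left end and $H$ is not (there are no tombstones in $\set{B}$, so being Left end-like is just being a Left end), and $\rtp(G)=\ltp(H)=1$, and aim to show $\outcome(G+H)=\mathscr{N}$. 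This amounts to showing that each player wins moving first on $G+H$.

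For Right moving first, I would simply invoke \cref{lem:left-end-plus-N}, which is proved for exactly this configuration (a $\mathscr{P}$-free Left end $G$ of outcome $\mathscr{N}$ added to a $\mathscr{P}$-free game $H$ of outcome $\mathscr{N}$) and gives $\outcomeR(G+H)=\mathscr{R}$ immediately. For Left moving first, note that since $G$ is a Left end, every Left option of $G+H$ has the form $G+H^L$. Because $\outcome(H)=\mathscr{N}$ and $H$ is not a Left end, Left has a winning first move in $H$, i.e.\ some $H^L$ with $\outcome(H^L)\in\{\mathscr{L},\mathscr{P}\}$; as $H$ is $\mathscr{P}$-free this forces $\outcome(H^L)=\mathscr{L}$. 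Since $G$ is a Left end and $H^L\in\pf{B}$ has outcome $\mathscr{L}$, \cref{lem:L+end} (with $H^L$ in the role of the outcome-$\mathscr{L}$ summand and $G$ in the role of the Left end) yields $\outcome(G+H^L)=\mathscr{L}$. Hence Left may move to the $\mathscr{L}$-position $G+H^L$, from which Right (to move) loses, so $\outcomeL(G+H)=\mathscr{L}$.

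Combining the two directions gives $\outcome(G+H)=\mathscr{N}$, which establishes condition (1); condition (2) follows by the conjugation argument above, so $\set{B}$ has \hyperlink{prop:X}{Property X}.

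The genuine difficulty has already been isolated into the preceding lemma, so I expect the main obstacle to lie \emph{upstream}, in the Right-moving-first direction handled by \cref{lem:left-end-plus-N}: there one must chase Right's threat through the Left end $G$, appealing to outcome-stability (\cref{lem:B-outcome-stable}) and to induction on Left's forced responses $H^{RL}$. The Left-moving-first direction, by contrast, should drop out at once from \cref{lem:L+end}. It is also worth remarking that the tipping-point hypotheses $\rtp(G)=\ltp(H)=1$ are never actually used in this plan; the argument in fact proves the stronger statement that $\outcome(G+H)=\mathscr{N}$ for \emph{every} $\mathscr{P}$-free Left end $G$ of outcome $\mathscr{N}$ and every non-Left-end $H\in\pf{B}$ of outcome $\mathscr{N}$.
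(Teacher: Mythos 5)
Your proposal is correct and follows essentially the same route as the paper's own proof: a without-loss-of-generality reduction to condition (1), Left's winning first move obtained from a Left option $H^L$ of outcome $\mathscr{L}$ (forced by $\mathscr{P}$-freeness) combined with \cref{lem:L+end}, and Right's winning first move delegated entirely to \cref{lem:left-end-plus-N}. Your closing observation is also consistent with the paper: its proof likewise never invokes the hypothesis $\rtp(G)=\ltp(H)=1$, so both arguments in fact establish the stronger statement you describe.
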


\begin{proof}
    Let $G,H\in\pf{B}$ with $\outcome(G),\outcome(H)=\mathscr{N}$. Furthermore,
    suppose without loss of generality that $\rtp(G)=\ltp(H)=1$ and that $G$ is
    a Left end and $H$ is not. We will show that $\outcome(G+H)=\mathscr{N}$.

    Since $\outcome(H)=\mathscr{N}$, it follows that there exists some option
    $H^L$ with $\outcome(H^L)=\mathscr{L}$. By \cref{lem:L+end},
    $\outcome(G+H^L)=\mathscr{L}$, and hence $\outcomeL(G+H)=\mathscr{L}$. We
    must also show that $\outcomeR(G+H)=\mathscr{R}$, which simply follows from
    \cref{lem:left-end-plus-N}.
\end{proof}

\begin{theorem}
    \label{cor:B-pfree-closed}
    The set of $\mathscr{P}$-free blocking forms $\pf{B}$ is closed under
    addition. (And hence $\pf{B}[B]$ is, too.)
\end{theorem}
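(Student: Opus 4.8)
The plan is to recognise this theorem as an immediate consequence of \cref{thm:p-free-closed}: we need only verify that the blocking universe $\set{B}$ satisfies each of that theorem's four hypotheses, namely that $\set{B}$ is an outcome-stable, hereditary, and integer-invertible monoid possessing \hyperlink{prop:X}{Property X}. All of the genuine content has been frontloaded into the preceding lemmas, so the work remaining here is essentially bookkeeping.

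First I would observe that $\set{B}$ is a hereditary monoid purely by virtue of being a universe: universes are closed under taking options (hence hereditary) and are additively closed (hence semigroups), and we recalled earlier that any hereditary semigroup of games is automatically a monoid, since it must contain the strict form of $0$. Next, \cref{lem:B-outcome-stable} gives that $\set{B}$ is outcome-stable, \cref{lem:B-int-invertible} gives that it is integer-invertible, and \cref{lem:B-prop-X} gives that it has \hyperlink{prop:X}{Property X}. With all four hypotheses in hand, \cref{thm:p-free-closed} applies directly and yields that $\pf{B}$ is a monoid, which is precisely the assertion that $\pf{B}$ is closed under addition.

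For the parenthetical claim about $\pf{B}[B]$, I would invoke the corollary to \cref{thm:p-free-closed} stated immediately after it, which upgrades the conclusion from $\pf{A}$ to $\pf{A}[A]$ under exactly the same hypotheses on $\set{A}$; applying it with $\set{A}=\set{B}$ closes out the proof.

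I do not anticipate any genuine obstacle in this final step, as every difficulty has already been absorbed into the earlier lemmas. If anything is worth flagging, it is that the verification of \hyperlink{prop:X}{Property X} through \cref{lem:B-prop-X} is the one ingredient that is not automatic: unlike in the dead-ending case, $\set{B}$ contains nonzero ends of outcome $\mathscr{N}$ (such as $\{\cdot\mid0,1\}$), so Property X is a real, rather than vacuous, condition here. But since that lemma is established, the assembly in this proof is routine.
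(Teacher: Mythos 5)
Your proposal is correct and matches the paper's own proof exactly: both assemble \cref{thm:p-free-closed} with \cref{lem:B-outcome-stable,lem:B-int-invertible,lem:B-prop-X} and the observation that $\set{B}$ is hereditary because it is a universe, with the $\pf{B}[B]$ claim following from the corollary. Your added remark that \hyperlink{prop:X}{Property X} is a genuinely non-vacuous condition for $\set{B}$ (unlike for $\set{E}$) is a point the paper itself makes elsewhere, so nothing is missing.
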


\begin{proof}
    This follows immediately from \cref{thm:p-free-closed} upon observing that
    the hypothesis is achieved via
    \cref{lem:B-outcome-stable,lem:B-int-invertible,lem:B-prop-X} and the
    observation that $\set{B}$ is hereditary (it is a universe).
\end{proof}

\subsection{Invertibility}

As we have mentioned previously, Milley and Renault used the ideas of tipping
points in order to characterise the invertible elements of the dead-ending
universe. With our generalised theory, we can now do similarly. We first give
results for the blocking universe specifically, after which we discuss the many
consequences.

Given a monoid of games $\set{A}$, we follow Davies and Yadav
\cite[p.~8]{davies.yadav:invertibility} in writing $\set{A}^\times$ and
$\set{A}^{\overline{\times}}$ for the set of $\set{A}$-invertible and conjugate
$\set{A}$-invertible elements of $\set{A}$ respectively. Since
$(\set{A},\equiv_\set{A})$ is a monoid, we remark that
$(\set{A}^\times,\equiv_\set{A})$ is a subgroup (the set of invertible elements
of a monoid is always a subgroup of the monoid). Given groups $\set{G}$ and
$\set{H}$, recall the notation $\set{H} \leq \set{G}$ used to mean that
$\set{H}$ is a subgroup of $\set{G}$. It was also shown in \cite[Proposition
3.4 on p.~9]{davies.yadav:invertibility} that
$\set{A}^{\overline{\times}}\leq\set{A}^\times$, and also that
$\set{U}^{\overline{\times}}=\set{U}^\times$ for every universe $\set{U}$
\cite[Theorem 3.7 on p.~11]{davies.yadav:invertibility}.

Recall that \cref{thm:B-pfree-strong} tells us that if $G\in\pf{B}$ and
$\outcome(G)\neq\mathscr{R}$, then $G$ is Left $\set{B}$-strong. Note that, by
symmetry, if we have $G\in\pf{B}$ with $\outcome(G)=\mathscr{N}$, then we must
have that $G$ is both Left and Right $\set{B}$-strong. For a symmetric form
(i.e.\ a form $G$ with $G=\overline{G}$), the only possible outcomes are
$\mathscr{N}$ and $\mathscr{P}$. But, the crucial consequence (and indeed
motivation) for proving that $\pf{B}$ is closed under addition is the
implication that $G+\overline{G}$ (which is necessarily a symmetric form) must
have outcome $\mathscr{N}$ for all $G\in\pf{B}$.

\begin{lemma}
    \label{lem:final-piece}
    If $G\in\pf{B}[B]$, then $G+\overline{G}$ is Left $\set{B}$-strong.
\end{lemma}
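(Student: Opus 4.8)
The plan is to identify the outcome of $G+\overline{G}$ as $\mathscr{N}$ and then hand the problem straight to \cref{thm:B-pfree-strong}. Since the property of being Left $\set{B}$-strong is determined by the outcomes of the sums $(G+\overline{G})+X$ with $X\in\set{B}$, it is invariant under $\equiv_\set{B}$; and as $\set{B}$ is conjugate-closed, $G\equiv_\set{B}G'$ forces $\overline{G}\equiv_\set{B}\overline{G'}$, hence $G+\overline{G}\equiv_\set{B}G'+\overline{G'}$. This justifies the usual reduction, so I would assume at the outset that $G\in\pf{B}$.

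Next I would record that $\overline{G}\in\pf{B}$ as well: conjugation carries subpositions to subpositions and fixes the outcome $\mathscr{P}$ (it merely swaps $\mathscr{L}$ and $\mathscr{R}$), so no subposition of $\overline{G}$ can have outcome $\mathscr{P}$. Invoking the closure of $\pf{B}$ under addition from \cref{cor:B-pfree-closed}, I obtain $G+\overline{G}\in\pf{B}$.

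The key step is then to pin down the outcome. The form $G+\overline{G}$ is symmetric (isomorphic to its own conjugate), so by the observation made immediately before this lemma its outcome is either $\mathscr{N}$ or $\mathscr{P}$. But $G+\overline{G}$ is a subposition of itself and lies in $\pf{B}$, so it cannot have outcome $\mathscr{P}$; therefore $\outcome(G+\overline{G})=\mathscr{N}$. Since $G+\overline{G}\in\pf{B}\subseteq\pf{B}[B]$ and $\outcome(G+\overline{G})=\mathscr{N}\neq\mathscr{R}$, \cref{thm:B-pfree-strong} delivers that $G+\overline{G}$ is Left $\set{B}$-strong, as required.

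I expect no serious obstacle here: the genuine work was all spent in proving that $\pf{B}$ is closed under addition (\cref{cor:B-pfree-closed}), after which this lemma is essentially a corollary. The only point deserving a moment's care is the opening reduction to $G\in\pf{B}$, which relies on conjugate-closure of $\set{B}$ to transfer the equivalence to the conjugates.
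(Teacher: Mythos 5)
Your proof is correct and follows essentially the same route as the paper's: reduce to $G\in\pf{B}$, deduce $G+\overline{G}\in\pf{B}$ via \cref{cor:B-pfree-closed}, use symmetry of the form to force outcome $\mathscr{N}$, and conclude with \cref{thm:B-pfree-strong}. The only difference is that you spell out the justifications (invariance of Left $\set{B}$-strength under $\equiv_\set{B}$ and closure of $\pf{B}$ under conjugation) that the paper leaves implicit.
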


\begin{proof}
    We may assume $G\in\pf{B}$, from which it must follow also that
    $\overline{G}\in\pf{B}$, and hence $G+\overline{G}\in\pf{B}$ by
    \cref{cor:B-pfree-closed}. Since $G+\overline{G}$ is a symmetric form, we
    know that is must have outcome $\mathscr{N}$ or $\mathscr{P}$; but
    $G+\overline{G}\in\pf{B}$, and so $\outcome(G+\overline{G})=\mathscr{N}$.
    Thus, we may conclude by \cref{thm:B-pfree-strong} that $G+\overline{G}$ is
    Left $\set{B}$-strong.
\end{proof}

\begin{proposition}
    \label{lem:p-free-invertible}
    If $\set{U}$ is a universe such that $\pf{U}$ is additively closed, and
    $G+\overline{G}$ is Left $\set{U}$-strong for all $G\in\pf{U}$, then
    $\pf{U}\leq\set{U}^\times$ (and similarly for $\pf{U}[U]$).
\end{proposition}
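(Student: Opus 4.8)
The plan is to establish invertibility of each $G\in\pf{U}$ directly, by proving $G+\overline{G}\equiv_\set{U}0$, and then to read off the subgroup structure. First I would record that $\pf{U}$ is hereditary: any option of a $\mathscr{P}$-free form of $\set{U}$ is again $\mathscr{P}$-free (being a subposition) and still lies in $\set{U}$ (universes are option-closed), so options of elements of $\pf{U}$ are themselves in $\pf{U}$. This licenses an induction on $\birth(G)$, with the trivial base case $G\cong0$.

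The core claim is that $G+\overline{G}\geq_\set{U}0$ for every $G\in\pf{U}$, which I would prove by applying \cref{thm:comparison} to the pair $G+\overline{G}$ and $0$. Three of the four conditions are immediate: the maintenance obligation stemming from the (non-existent) Right options of $0$ is vacuous; the proviso ``if $0$ is a Left end then $G+\overline{G}$ is Left $\set{U}$-strong'' is exactly the hypothesis; and the proviso ``if $G+\overline{G}$ is a Right end then $0$ is Right $\set{U}$-strong'' holds because $0$, being a Right end, is always Right $\set{U}$-strong. The remaining condition is the maintenance property for the Right options of $G+\overline{G}$, and this is where the induction does its work: every such option has the shape $G^R+\overline{G}$ or $G+\overline{G^L}$, and in each case Left can respond inside the conjugate component to reach $G^R+\overline{G^R}$ or $G^L+\overline{G^L}$, respectively—a form of the same symmetric shape $G'+\overline{G'}$ but of strictly smaller birthday. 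By the inductive hypothesis such a response is $\equiv_\set{U}0$, hence $\geq_\set{U}0$, which discharges the maintenance obligation. I expect this inductive maintenance step—spotting the conjugate-component response that returns to a smaller symmetric form—to be the main (though routine) obstacle; note the inductive hypothesis is available because the hypothesis ``$G'+\overline{G'}$ is Left $\set{U}$-strong'' holds for the options $G'\in\pf{U}$ as well.

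With $G+\overline{G}\geq_\set{U}0$ secured, the reverse inequality $0\geq_\set{U}G+\overline{G}$ comes for free by conjugating: since $\set{U}$ is conjugate-closed and conjugation reverses both the outcome order and the roles of the two games while fixing the symmetric form $G+\overline{G}$, we obtain $0=\overline{0}\geq_\set{U}\overline{G+\overline{G}}=G+\overline{G}$. Hence $G+\overline{G}\equiv_\set{U}0$, so $\overline{G}$ is an inverse of $G$ (and $\overline{G}\in\set{U}$ by conjugate closure), giving $G\in\set{U}^\times$ and therefore $\pf{U}\subseteq\set{U}^\times$.

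Finally, for the subgroup structure I would invoke the one-step subgroup test. The set $\pf{U}$ is nonempty since $0\in\pf{U}$; it is closed under addition by hypothesis; and it is closed under inverses because the inverse of any $G\in\pf{U}$ is $\overline{G}$, which is again $\mathscr{P}$-free (conjugation fixes the outcome $\mathscr{P}$, so no subposition of $\overline{G}$ has outcome $\mathscr{P}$) and lies in $\set{U}$. Concretely, for $G,H\in\pf{U}$ the element $G+\overline{H}\in\pf{U}$ represents $G-H$ in the group, so $\pf{U}\leq\set{U}^\times$. The claim for $\pf{U}[U]$ then follows because, viewed as sets of equivalence classes modulo $\set{U}$, $\pf{U}$ and $\pf{U}[U]$ have the same image in the quotient monoid, so one is a subgroup exactly when the other is.
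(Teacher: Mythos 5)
Your proposal is correct and takes essentially the same route as the paper's proof: both compare $G+\overline{G}$ with $0$ via \cref{thm:comparison}, discharging the proviso by the Left-strength hypothesis (together with the symmetry of $G+\overline{G}$, which you realise through conjugation) and the maintenance by induction using the mirror move in the conjugate component, then invoke additive closure of $\pf{U}$ for the subgroup claim. Your write-up simply makes explicit what the paper leaves implicit---heredity of $\pf{U}$, inverse-closure via $\overline{G}\in\pf{U}$, and the passage to $\pf{U}[U]$.
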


\begin{proof}
    Let $G\in\pf{U}$. Following \cref{thm:comparison}, we have the proviso by
    hypothesis (i.e.\ $G+\overline{G}$ is Left and Right $\set{U}$-strong), and
    then we have the maintenance by induction. Thus, $G\in\set{U}^\times$, and
    we have the result since $\pf{U}$ is additively closed by hypothesis.
\end{proof}

We could have alternatively used \cite[Theorem 3.8 on
p.~13]{davies.yadav:invertibility} to prove \cref{lem:p-free-invertible}, but
that would have introduced some unnecessary complexity.

\begin{theorem}
    \label{prop:b-pfree-invert}
    If $G\in\pf{B}[B]$, then $G$ is $\set{B}$-invertible. That is,
    $\pf{B}[B]\leq\set{B}^\times$.
\end{theorem}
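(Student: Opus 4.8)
The plan is to obtain the result as an essentially immediate consequence of \cref{lem:p-free-invertible}, applied to the universe $\set{U}=\set{B}$. That proposition concludes $\pf{U}\leq\set{U}^\times$ (and similarly for $\pf{U}[U]$) precisely when three hypotheses hold: that $\set{U}$ is a universe, that $\pf{U}$ is additively closed, and that $G+\overline{G}$ is Left $\set{U}$-strong for every $G\in\pf{U}$. So the whole task reduces to checking that $\set{B}$ meets each of these in turn.

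First I would observe that $\set{B}$ is a universe by definition, being the blocking universe of all blocking forms, so the first hypothesis costs nothing. For the second, I would cite \cref{cor:B-pfree-closed}, which established exactly that $\pf{B}$ is closed under addition (and that $\pf{B}[B]$ is, too). For the third, I would appeal to \cref{lem:final-piece}, which showed that $G+\overline{G}$ is Left $\set{B}$-strong for every $G\in\pf{B}[B]$, and hence in particular for every $G\in\pf{B}$, which is just what is needed. With all three hypotheses verified, \cref{lem:p-free-invertible} yields $\pf{B}\leq\set{B}^\times$, and its parenthetical clause for $\pf{U}[U]$ upgrades this to $\pf{B}[B]\leq\set{B}^\times$, which is the desired statement.

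There is no real obstacle remaining at this stage: the genuine work lies entirely upstream, in showing that $\set{B}$ is outcome-stable, hereditary, and integer-invertible with \hyperlink{prop:X}{Property X} (so that \cref{cor:B-pfree-closed} applies), and in establishing the strongness of symmetric sums in \cref{lem:final-piece}. Given those, this theorem is a clean bookkeeping step. The only subtlety worth flagging is to confirm that the quantifier ranges line up correctly: \cref{lem:p-free-invertible} demands the strongness condition for members of $\pf{B}$, and this is indeed supplied, since \cref{lem:final-piece} proves it for the larger class $\pf{B}[B]\supseteq\pf{B}$.
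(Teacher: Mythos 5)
Your proposal is correct and follows exactly the paper's own route: the theorem is deduced as an immediate application of \cref{lem:p-free-invertible} to $\set{U}=\set{B}$, with additive closure supplied by \cref{cor:B-pfree-closed} and Left $\set{B}$-strongness of $G+\overline{G}$ supplied by \cref{lem:final-piece}. Your quantifier check (that \cref{lem:final-piece} covers $\pf{B}[B]\supseteq\pf{B}$) is a sound and correctly resolved observation, matching the paper's implicit reasoning.
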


\begin{proof}
    This follows immediately from \cref{lem:p-free-invertible} upon observing
    that the hypothesis is satisfied via
    \cref{cor:B-pfree-closed,lem:final-piece}.
\end{proof}

Using this result, Davies and Milley proved in \cite{davies.milley:order} that
a game $G\in\set{B}$ is $\set{B}$-invertible if and only if its
$\set{B}$-simplest form is $\mathscr{P}$-free---an almost identical result to
the dead-ending case. Equivalently, $\pf{B}[B]=\set{B}^\times$, just as
$\pf{E}[E]=\set{E}^\times$ \cite{milley.renault:invertible}.

To finish, we prove a few (simple) general results about submonoids of monoids
whose $\mathscr{P}$-free games are invertible. Given our result for the
blocking universe (in particular, \cref{prop:b-pfree-invert}), the following
results will yield insights about a large number of submonoids. Just like we
mentioned at the end of \cref{sec:combo} (where we gave an explicit example),
we have to consider the case of $\pf{A}=\emptyset$ due to there being no
guarantee that the identity of a monoid is $\mathscr{P}$-free.

\begin{proposition}
    If $\set{A}$ is a submonoid of a monoid $\set{S}$ such that
    $\pf{S}\leq\set{S}^\times$ and, for all $G\in\pf{A}$ and all $J\in\pf{S}$,
    \begin{quote}
        if $G+J\equiv_\set{S}0$, then there exists some $H\in\pf{A}$ with
        $H\equiv_\set{A}J$,
    \end{quote}
    then either $\pf{A}\leq\set{A}^\times$ or else $\pf{A}=\emptyset$ (and
    similarly for $\pf{A}[A]$).
\end{proposition}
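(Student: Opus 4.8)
The plan is to reduce everything to a single claim: assuming $\pf{A}\neq\emptyset$, the image of $\pf{A}$ in the quotient monoid $(\set{A},\equiv_\set{A})$ is a subgroup of $\set{A}^\times$. The statement for $\pf{A}[A]$ then comes for free, since by \cref{def:p-free-equiv} the sets $\pf{A}$ and $\pf{A}[A]$ have the same image modulo $\set{A}$ (and if $\pf{A}=\emptyset$ then no form of $\set{A}$ is $\mathscr{P}$-free, so $\pf{A}[A]=\emptyset$ as well). The first thing I would record is that $\pf{A}=\pf{S}\cap\set{A}$: this holds because $\set{A}\subseteq\set{S}$ and being $\mathscr{P}$-free is an intrinsic property of a form. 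Consequently every $G\in\pf{A}$ also lies in $\pf{S}$, and $\pf{A}$ is closed under addition as a set of forms, being the intersection of the additively closed sets $\pf{S}$ (closed by virtue of the subgroup hypothesis $\pf{S}\leq\set{S}^\times$) and $\set{A}$ (a submonoid).

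The heart of the proof is upgrading $\set{S}$-invertibility to $\set{A}$-invertibility, and this is precisely what the technical hypothesis is built for. Let $G\in\pf{A}\subseteq\pf{S}$. Since $\pf{S}\leq\set{S}^\times$, the class of $G$ modulo $\set{S}$ has its inverse inside the subgroup $\pf{S}$, so there exists $J\in\pf{S}$ with $G+J\equiv_\set{S}0$. I would then invoke the hypothesis on this pair to produce $H\in\pf{A}$ with $H\equiv_\set{A}J$. Because $\set{A}\subseteq\set{S}$, the relation $\equiv_\set{S}$ refines $\equiv_\set{A}$, whence $G+J\equiv_\set{A}0$ and therefore $G+H\equiv_\set{A}G+J\equiv_\set{A}0$. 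As $H\in\pf{A}\subseteq\set{A}$, this shows at once that $G\in\set{A}^\times$ and that the inverse of the class of $G$ is represented by an element of $\pf{A}$.

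The subgroup verification is then routine. Closure under inverses is exactly the previous paragraph. Closure under the operation is immediate from $\pf{A}$ being closed under addition: for $G,H\in\pf{A}$ the form $G+H$ again lies in $\pf{A}$, so its class lies in the image. For the identity I use nonemptiness: taking any $G\in\pf{A}$ and its inverse $H\in\pf{A}$, the sum $G+H\in\pf{A}$ satisfies $G+H\equiv_\set{A}0$, so the class of $0$ is represented in $\pf{A}$. (This is the one place the hypothesis $\pf{A}\neq\emptyset$ is genuinely needed, since the strict form $0$ itself need not belong to $\set{A}$.) A nonempty subset of $\set{A}^\times$ closed under the operation and inverses is a subgroup, so $\pf{A}\leq\set{A}^\times$.

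I expect the only real obstacle to be the step that pulls an inverse back into $\set{A}$: the subgroup hypothesis $\pf{S}\leq\set{S}^\times$ furnishes an inverse of $G$ living a priori only in $\set{S}$, and there is no reason for it to lie in $\set{A}$; without the stated implication the whole argument breaks down. Everything else is careful bookkeeping, the two points to watch being the direction of the refinement $\equiv_\set{S}\Rightarrow\equiv_\set{A}$ and the distinction between a form and its class modulo $\set{A}$ (in particular remembering that the additive closure of $\pf{A}$ rests on that of the subgroup $\pf{S}$, which is why $\pf{S}\leq\set{S}^\times$ is assumed rather than merely that each element of $\pf{S}$ is $\set{S}$-invertible).
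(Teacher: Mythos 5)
Your proposal is correct and follows essentially the same route as the paper's own proof: identify $\pf{A}=\set{A}\cap\pf{S}$ to get additive closure, extract an inverse $J\in\pf{S}$ of $G$ from the subgroup hypothesis $\pf{S}\leq\set{S}^\times$, pass from $\equiv_\set{S}$ to $\equiv_\set{A}$ via $\set{A}\subseteq\set{S}$, and use the stated implication to replace $J$ by some $H\in\pf{A}$ with $G+H\equiv_\set{A}0$, so $G\in\set{A}^\times$. The additional bookkeeping you supply (the identity class, closure under inverses, and the reduction of the $\pf{A}[A]$ case to the $\pf{A}$ case) is simply an expanded version of what the paper leaves implicit.
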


\begin{proof}
    Assume $\pf{A}\neq\emptyset$. Since $\set{A}\subseteq\set{S}$, it follows
    that $\pf{A}\subseteq\pf{S}$. Note that $\pf{A}=\set{A}\cap\pf{S}$, and so
    $\pf{A}$ must be additively closed. Now let $G\in\pf{A}$. It remains to
    show $G\in\set{A}^\times$.

    Since $G\in\pf{A}$, we must have also that $G\in\pf{S}$. Because
    $\pf{S}\leq\set{S}^\times$ by hypothesis, it follows that there exists some
    $J\in\pf{S}$ such that $G+J\equiv_\set{S}0$. But $\set{A}\subseteq\set{S}$,
    and so $G+J\equiv_\set{A}0$. By hypothesis, there exists some $H\in\pf{A}$
    such that $J\equiv_\set{A}H$. Thus, it is clear that $G+H\equiv_\set{A}0$,
    yielding that $G\in\set{A}^\times$, and also the result.
\end{proof}

\begin{proposition}
    \label{prop:compat-submonoid-conj}
    If $\set{A}$ is a submonoid of a monoid $\set{S}$ such that
    $\pf{S}\leq\set{S}^{\overline{\times}}$ and
    \begin{quote}
        for all $G\in\pf{A}$, there exists some $H\in\pf{A}$ with
        $H\equiv_\set{A}\overline{G}$,
    \end{quote}
    then either $\pf{A}\leq\set{A}^{\overline{\times}}$ or else
    $\pf{A}=\emptyset$ (and similarly for $\pf{A}[A]$).
\end{proposition}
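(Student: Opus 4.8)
The plan is to follow the proof of the preceding proposition almost verbatim, the only change being that the auxiliary inverse ``$J$'' is now forced to be the conjugate $\overline{G}$. As there, I would first dispose of the trivial case by assuming $\pf{A}\neq\emptyset$; it then remains to show that every $G\in\pf{A}$ lies in $\set{A}^{\overline{\times}}$ and that $\pf{A}$ is in fact a subgroup.

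First I would record the structural fact that, since $\set{A}\subseteq\set{S}$ and being (strictly) $\mathscr{P}$-free is a property of the form alone, we have $\pf{A}=\set{A}\cap\pf{S}$. The hypothesis $\pf{S}\leq\set{S}^{\overline{\times}}$ makes $\pf{S}$ a subgroup, hence additively closed; intersecting with the submonoid $\set{A}$ shows that $\pf{A}$ is additively closed and contains $0$ (which is $\mathscr{P}$-free).

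The heart of the argument fixes $G\in\pf{A}\subseteq\pf{S}$. Then $\pf{S}\leq\set{S}^{\overline{\times}}$ gives $G+\overline{G}\equiv_\set{S}0$, and since testing against the smaller set $\set{A}\subseteq\set{S}$ is a weaker condition, this descends to $G+\overline{G}\equiv_\set{A}0$. This is exactly where the second hypothesis enters: it provides some $H\in\pf{A}$ with $H\equiv_\set{A}\overline{G}$, that is, a representative of the conjugate that genuinely lives in $\set{A}$. Then $G+H\equiv_\set{A}G+\overline{G}\equiv_\set{A}0$, witnessing $G\in\set{A}^{\overline{\times}}$; the same $H$ shows $\pf{A}$ is closed under taking (group) inverses, so together with additive closure we obtain $\pf{A}\leq\set{A}^{\overline{\times}}$. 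The statement for $\pf{A}[A]$ then follows by the routine device of replacing each element by an $\equiv_\set{A}$-equivalent $\mathscr{P}$-free representative, exactly as in the earlier corollary.

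I do not expect a serious obstacle here: all of the real content was already secured inside $\set{S}$ (via $\pf{S}\leq\set{S}^{\overline{\times}}$), and we are merely transporting it downward. The only point requiring care---and the sole reason the second hypothesis appears---is that conjugate $\set{A}$-invertibility demands an \emph{actual} element of $\set{A}$ representing $\overline{G}$; without the hypothesis, $\overline{G}$ need not be represented in $\set{A}$ at all, so $G+\overline{G}\equiv_\set{A}0$ by itself would not place $G$ in $\set{A}^{\overline{\times}}$.
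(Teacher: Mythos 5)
Your proof is correct and is essentially the paper's own argument: assume $\pf{A}\neq\emptyset$, take $G\in\pf{A}\subseteq\pf{S}$, use $\pf{S}\leq\set{S}^{\overline{\times}}$ to get $G+\overline{G}\equiv_\set{S}0$, note this descends to $G+\overline{G}\equiv_\set{A}0$ because $\set{A}\subseteq\set{S}$, and invoke the second hypothesis to produce $H\in\pf{A}$ with $H\equiv_\set{A}\overline{G}$, so that $G+H\equiv_\set{A}0$ and $G\in\set{A}^{\overline{\times}}$; your closing remark on why that hypothesis is indispensable is exactly the point. The only (inessential) slip is your claim that $0\in\pf{A}$: the paper explicitly warns that a non-hereditary monoid need not contain the strict form $\{\cdot\mid\cdot\}$, but nothing is lost, since nonemptiness together with closure under addition and under conjugate inverses (both of which you establish) already gives the subgroup, the identity class being represented by $G+H$.
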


\begin{proof}
    Assume $\pf{A}\neq\emptyset$ and let $G\in\pf{A}$. It suffices to show that
    $G\in\set{A}^{\overline{\times}}$. Since $\set{A}\subseteq\set{S}$, it
    follows that $\pf{A}\subseteq\pf{S}$, and so $G\in\pf{S}$. By hypothesis,
    $G\in\set{S}^{\overline{\times}}$, and so $G+\overline{G}\equiv_\set{S}0$.
    Since $\set{A}\subseteq\set{S}$, we have $G+\overline{G}\equiv_\set{A}0$.
    By hypothesis, there exists some $H\in\pf{A}$ with
    $H\equiv_\set{A}\overline{G}$, and hence we have the result.
\end{proof}

\begin{corollary}
    \label{cor:compat-submonoid}
    If $\set{A}$ is a conjugate-closed submonoid of a universe $\set{U}$ with
    $\pf{U}\leq\set{U}^\times$, then either
    $\pf{A}\leq\set{A}^{\overline{\times}}$ or else $\pf{A}=\emptyset$ (and
    similarly for $\pf{A}[A]$).
\end{corollary}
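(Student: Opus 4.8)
The plan is to obtain this as an immediate instance of \cref{prop:compat-submonoid-conj}, taking $\set{S}=\set{U}$. To do so I must check that its two hypotheses hold in the present situation: that $\pf{U}\leq\set{U}^{\overline{\times}}$, and that for every $G\in\pf{A}$ there is some $H\in\pf{A}$ with $H\equiv_\set{A}\overline{G}$.

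For the first hypothesis, I would appeal to the fact (recorded in the discussion preceding \cref{lem:final-piece}, and due to Davies and Yadav) that every universe satisfies $\set{U}^{\overline{\times}}=\set{U}^\times$. Since we are assuming $\pf{U}\leq\set{U}^\times$, this equality immediately upgrades to $\pf{U}\leq\set{U}^{\overline{\times}}$, as needed.

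For the second hypothesis, I would use that $\set{A}$ is conjugate-closed. Given any $G\in\pf{A}$, conjugate closure places $\overline{G}$ in $\set{A}$; and $\overline{G}$ is again strictly $\mathscr{P}$-free, since the subpositions of $\overline{G}$ are precisely the conjugates of the subpositions of $G$, and conjugation fixes the outcome $\mathscr{P}$. Hence $\overline{G}\in\pf{A}$, so taking $H=\overline{G}$ satisfies $H\equiv_\set{A}\overline{G}$ trivially. With both hypotheses verified, \cref{prop:compat-submonoid-conj} delivers the conclusion, and the corresponding statement for $\pf{A}[A]$ follows from the $\pf{A}[A]$-version of that proposition in exactly the same way. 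As the whole argument is just a verification of hypotheses, I expect no genuine obstacle; the only point needing a word of justification is the standard observation that conjugation preserves $\mathscr{P}$-freeness.
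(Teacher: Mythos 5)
Your proposal is correct and matches the paper's own proof: both reduce \cref{cor:compat-submonoid} to \cref{prop:compat-submonoid-conj} with $\set{S}=\set{U}$, using the conjugate property of universes to pass from $\pf{U}\leq\set{U}^\times$ to $\pf{U}\leq\set{U}^{\overline{\times}}$, and conjugate-closure of $\set{A}$ to supply $H=\overline{G}$. The only difference is cosmetic: you spell out the (standard) fact that conjugation preserves strict $\mathscr{P}$-freeness, which the paper leaves implicit.
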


\begin{proof}
    Assume $\pf{A}\neq\emptyset$ and let $G\in\pf{A}$. Since $\set{A}$ is
    conjugate-closed, it follows that $\overline{G}\in\pf{A}$. We know that
    $\set{U}$ has the conjugate property (it is a universe), and so
    $\set{U}^\times=\set{U}^{\overline{\times}}$. Hence, we have the result by
    \cref{prop:compat-submonoid-conj}.
\end{proof}

Since the blocking universe satisfies $\pf{B}[B]\leq\set{B}^\times$ (indeed, it
is the largest such known universe), \cref{cor:compat-submonoid} tells us that,
for every conjugate-closed submonoid $\set{A}$ of $\set{B}$, we must have
$\pf{A}[A]\leq\set{A}^{\overline{\times}}$ or else $\pf{A}[A]=\emptyset$. For
example, the set of $\mathscr{P}$-free \textsc{domineering} positions
\emph{must} be a subgroup of the conjugate-invertible \textsc{domineering}
subgroup. Of course, the dead-ending universe satisfying
$\pf{E}[E]\leq\set{E}^\times$ would also have implied this result about
\textsc{domineering}, but we can now say things about rulesets that are not
dead-ending (but still blocking), like \textsc{maze} and \textsc{cricket pitch}
(see \cite[pp.~306, 311]{albert.nowakowski.ea:lessons}) .

\section{Final remarks}
\label{sec:final-remarks}

It should be clear at this point that the set of $\mathscr{P}$-free forms is
important and warrants further study. Doing so will allow us to infer
properties of the group structure of $\set{U}^\times$ for various universes
$\set{U}$, and, excitingly, also for various conjugate-closed monoids of games.
We have many questions to ask.

\begin{problem}
    What are the maximal sets of $\mathscr{P}$-free games that are closed under
    addition? Is it possible to have a universe $\set{U}$ where $\pf{U}[U]$ is
    a monoid, but where $\pf{U}[U]\not\leq\set{U}^\times$?
\end{problem}

\begin{problem}
    What are the monoids of games that are maximal with respect to being
    outcome-stable, hereditary, integer-invertible, and having
    \hyperlink{prop:X}{Property X}? Clearly $\set{M}$ cannot be such a monoid,
    since it is not integer-invertible. But such a monoid must exist by Zorn's
    Lemma, since $\set{B}$ satisfies all of those properties.
\end{problem}

The first relevant group one might want to examine is $\set{D}^\times$, whose
elements have been characterised by Fisher, Nowakowski, and Santos
\cite[Theorem 12 on p.~7]{fisher.nowakowski.ea:invertible}. What do our results
here say for the dicot universe? (We know that $\set{D}$ is conjugate-closed,
and it is trivially a subuniverse of $\set{B}$.) Well, nothing, actually: the
only $\mathscr{P}$-free game in $\set{D}$ is 0 (this is because $*$, which has
outcome $\mathscr{P}$, is the only dicot born on day 1, and hence it must be a
subposition of every non-zero dicot). Thus, although it must follow from our
results that $\pf{D}[D]\leq \set{D}^\times$, it turns out that $\pf{D}[D]$ is
the trivial subgroup. And indeed, for all universes $\set{U}$, it holds that
$\pf{U}[U]$ is the trivial group if and only if $1\notin\set{U}$; i.e.\ if and
only if $\set{D}(\overline{1})\not\subseteq\set{U}$. Recall that
$\set{D}(\set{A})$ refers to the universal closure of $\set{A}$: the smallest
universe containing a set of games $\set{A}$ (see
\cite[pp.~195--196]{siegel:on}).

So, to apply our theory, we will want to analyse those universes that are
superuniverses of $\set{D}(\overline{1})$---which is a lot of universes!
Specifically, we will want to look at those universes $\set{U}$ that satisfy
$\set{D}(\overline{1})\subseteq\set{U}\subseteq\set{B}$. And, we can
immediately point out that $\pf{U}[U]$ is non-trivial since $\overline{1}$ must
be invertible; in fact, $\pf{U}[U]$ must contain a subgroup isomorphic to
$\mathbb{Z}$. As an initial point of embarkation, we pose the understanding of
$\set{D}(\overline{1})^\times$ as a challenge to the reader.

\begin{problem}
    What can we say about the group structure of
    $\pf*{\set{D}(\overline{1})}[\set{D}(\overline{1})]$, particularly in
    relation to $\set{D}(\overline{1})^\times$? It may be worthwhile
    investigating the subgroups generated by elements born by day $n$.
\end{problem}

\begin{problem}
    What are the universes (or monoids) $\set{U}$ that are maximal with respect
    to satisfying $\pf{U}[U]\leq\set{U}^\times$? Certainly $\set{M}$ is not
    such a universe, but one must exist by Zorn's Lemma, since
    $\pf{B}[B]\leq\set{B}^\times$. Is $\set{B}$ one such universe?
\end{problem}

Why is it that $\pf{E}[E]=\set{E}^\times$ and $\pf{B}[B]=\set{B}^\times$? Is
each of these some kind of Goldilocks universe whose structure happens to be
`just right'? It is important to remember that only four universes have been
studied in the context of invertibility: $\set{D}$, $\set{E}$, $\set{B}$, and
$\set{M}$ (yes, we will run out of letters eventually). Of these four
universes, two of them have this marvellous property, and the other two
emphatically do not. This is a rather high success rate. Given that there is an
uncountable number of universes, it seems reasonable to conjecture that there
should be many more exhibiting this property. A contrarian, however, might
argue that the simplicity in the descriptions of $\set{E}$ and $\set{B}$ (i.e.\
why they have been studied at all in the first place) does indeed provide an
amount of magic that is `just right'. We leave it to the reader to form their
own opinions, and we look forward to reading concrete developments on either
side of the fence.

\begin{problem}[Goldilocks problem]
    If $\set{A}$ is a conjugate-closed monoid that lies within a universe
    $\set{U}$ with $\pf{U}[U]\leq\set{U}^\times$, and $\pf{A}[A]\neq\emptyset$,
    then we know that $\pf{A}[A]\leq\set{A}^{\overline{\times}}$. But when is
    $\pf{A}[A]=\set{A}^{\overline{\times}}$?
\end{problem}

Most of the magic comes from this result: if $G\in\set{E}$ has outcome
$\mathscr{P}$, then $G+\overline{G}$ is \emph{not} Left $\set{E}$-strong. The
analogous result holds for $\set{B}$, too. The only wrinkle is that one also
has to show that, if the $\set{U}$-simplest form of a game in $\set{U}$ is
$\mathscr{P}$-free, then there exists some equivalent game in $\set{U}$ that is
also $\mathscr{P}$-free. This happens to be true for both $\set{E}$ and
$\set{B}$: in $\set{E}$, taking the `canonical form' is enough; in $\set{B}$,
the situation is slightly more complicated, but involves modifying a `canonical
form'. The crux of the issue in general is that you have to be able to replace
end-reversible options with $\mathscr{P}$-free end-reversible options. If this
is always true, then you are happy. If not, then perhaps the definition of
being $\mathscr{P}$-free modulo $\set{U}$ would need to be altered to allow for
equivalence with augmented forms that admit $\set{U}$-expansions. This would be
unfortunate in that it is not so obvious how such a definition would generalise
for monoids that are not universes.

\bibliographystyle{plainurl}
\bibliography{bib}

\newpage

\appendix

\section{Table summaries}
\label{app:tables}

\renewcommand{\arraystretch}{1.2}
\setlength{\tabcolsep}{4pt}
\begin{table}[h]
\caption{Summary from \cref{sec:tp-basics}}
\vspace{3pt}
\centering
\begin{tabular}{l|lr}
    \hline
    \rowcolor{gray!50} Reference & \multicolumn{2}{c}{Statement} \\[2pt]
    \hline
                                 & \multicolumn{2}{c}{If
                                     $\set{A}\subseteq\maug$ is an
                                 outcome-stable, hereditary}\\
                                 & \multicolumn{2}{c}{monoid of games
                                     containing $1$ and
                                 $\overline{1}$,}\\
                                 & \multicolumn{2}{c}{and
                                 $G\in\pf{A}$,}\\
    \cmidrule(lr){2-3}
    \cref{lem:tech1} & \multirow{2}{*}{if $\outcome(G)=\mathscr{L}$} &
    $\forall G^L \in \mathscr{L}, \ntp(G^L)\leq\rtp(G)$ \\
    \cref{lem:tech3}.2 & & $\forall G^R \in \mathscr{L}\cup\mathscr{N},
    \rtp(G^R)\geq\ntp(G)$\\
    \cref{lem:tech3}.3 & & $\exists \, G^R \mid\rtp(G^R)=\ntp(G)$ \\
    \cmidrule(lr){3-3}
    \cref{lem:tech3}.1 & \multicolumn{1}{r}{$\land$if
    $\ntp(G)\neq\rtp(G)-1$}
                       &
    $\exists
    \, G^L \in \mathscr{L}\mid \ntp(G^L)=\rtp(G)$\\
    \cmidrule(lr){1-3}
    \cref{lem:tech1} & \multirow{4}{*}{if $\outcome(G)=\mathscr{N}$} &
    $\forall G^L \in \mathscr{L}, \ntp(G^L)\leq\rtp(G)$ \\
    \cref{lem:tech1}{\footnotesize \it ~sym}& & $\forall G^R \in \mathscr{R},
    \ntp(G^R)\leq\hspace{2pt}\ltp(G)$\\
    \cref{lem:tech2} & & $\exists \, G^L \in
    \mathscr{L}\mid\ntp(G^L)=\rtp(G)$\\
    \cref{lem:tech2}{\footnotesize \it ~sym}& & $\exists \, G^R \in 
    \mathscr{R}\mid
    \ntp(G^R)=\hspace{2pt}\ltp(G)$\\
    \cmidrule(lr){1-3}
    \cref{lem:tech1}{\footnotesize \it ~sym}& \multirow{2}{*}{if 
    $\outcome(G)=\mathscr{R}$} &
    $\forall G^R \in \mathscr{R}, \ntp(G^R)\leq\hspace{2pt}\ltp(G)$ \\
    \cref{lem:tech3}.2{\footnotesize \it ~sym}& & $\forall G^L \in 
    \mathscr{N}\cup\mathscr{R},
    \ltp(G^L)\geq\ntp(G)$\\
    \cref{lem:tech3}.3{\footnotesize \it ~sym}& & $\exists \, G^L \mid 
    \ltp(G^L)=\ntp(G)$ \\
    \cmidrule(lr){3-3}
    \cref{lem:tech3}.1{\footnotesize \it ~sym}& \multicolumn{1}{r}{$\land$if 
    $\ntp(G)\neq\ltp(G)-1$}
    &
    $\exists \, G^R \in \mathscr{R}\mid \ntp(G^R)=\hspace{2pt}\ltp(G)$\\
    \cmidrule(lr){1-3}
    \multirow{2}{*}{\cref{prop:ends-tp}} & if $G$ a Left end &
    $\rtp(G)=\ntp(G)+1\;$ \\
    & if $G$ a Right end & $\ltp(G)=\ntp(G)+1$. \\
    \hline
    \hline
\end{tabular}
\label{tab:techs}
\end{table}

\renewcommand{\arraystretch}{1.2}
\begin{table}
\caption{Summary of lemmas from \cref{sec:int}}
\vspace{3pt}
\centering
\begin{tabular}{r|lrl}
    \hline
    \rowcolor{gray!50} Reference & \multicolumn{3}{c}{Statement} \\[2pt]
    \hline
                                 & \multicolumn{3}{c}{If
                                     $\set{A}\subseteq\maug$ is an
                                 outcome-stable and}\\
                                 & \multicolumn{3}{c}{integer-invertible monoid
                                     of games, and
                                 $G,H\in\pf{A}[A]$,}\\
    \cmidrule(lr){2-4}
    \multirow{2}{*}{\cref{lem:11}} & \multirow{2}{*}{if
    $\outcome(G)=\mathscr{L}$ and}
    & then if $\ntp(G)>\ltp(H)$, & $\outcome(G+H)=\mathscr{L}$ \\
    \cmidrule(lr){3-4}
    & $~~\outcome(H)=\mathscr{N}$, & else if 
    $\rtp(G)<\ltp(H)$, &
    $\outcome(G+H)=\mathscr{N}$ \\
    \cmidrule(lr){1-4}
    \multirow{2.5}{*}{\cref{lem:11}}& \multirow{2}{*}{if
    $\outcome(G)=\mathscr{R}$ and}
    & then if $\ntp(G)>\rtp(H)$, & $\outcome(G+H)=\mathscr{R}$ \\
    \cmidrule(lr){3-4}
    {\footnotesize \it ~sym} & $~~\outcome(H)=\mathscr{N}$, & 
    else if 
    $\ltp(G)<\rtp(H)$, &
    $\outcome(G+H)=\mathscr{N}$ \\
    \cmidrule(lr){1-4}
    \multirow{4}{*}{\cref{lem:13}} & \multirow{4}{*}{if
    $\outcome(G),\outcome(H)=\mathscr{N}$}
    & then if $\rtp(G)>\ltp(H)$ &
    \multirow{2}{*}{$\outcome(G+H)\geq\mathscr{N}$} \\
    & & or $\ltp(G)<\rtp(H)$, & \\
    \cmidrule(lr){3-4}
    & & else if $\rtp(G)<\ltp(H)$ &
    \multirow{2}{*}{$\outcome(G+H)\leq\mathscr{N}$} \\
    & & or $\ltp(G)>\rtp(H)$, & \\
    \cmidrule(lr){1-4}
    \multirow{8}{*}{\cref{lem:14}} & & then if $\ntp(G)>\ltp(H)$, &
    $\outcome(G+H)=\mathscr{L}$ \\
    \cmidrule(lr){3-4}
    & & then if $\ntp(G)>\ntp(H)$ &
    \multirow{2}{*}{$\outcome(G+H)\geq\mathscr{N}$}\\
    &  & or $\rtp(G)>\ltp(H)$, &\\
    \cmidrule(lr){3-4}
    &  & then if $\ntp(G)>\ntp(H)$ &  \\
    &  & and $\rtp(G)<\ltp(H)$ & \multirow{2}{*}{$\outcome(G+H)=\mathscr{N}$}
    \\
    & \multirow{2}{*}{if $\outcome(G)=\mathscr{L}$ and} & or if
    $\ntp(G)<\ntp(H)$ &
    \\
    & \multirow{2}{*}{$~~~\outcome(H)=\mathscr{R}$} & and $\rtp(G)>\ltp(H)$, &
    \\
    \cmidrule(l){1-1} \cmidrule(lr){3-4}
    \multirow{3}{*}{\cref{lem:14}} & & then if $\ntp(G)<\ntp(H)$ &
    \multirow{2}{*}{$\outcome(G+H)\leq\mathscr{N}$}\\
    \multirow{2.25}{*}{{\footnotesize \it ~sym}}&  & or $\rtp(G)<\ltp(H)$, &\\
    \cmidrule(lr){3-4}
    & & then if $\rtp(G)<\ntp(H)$, &
    $\outcome(G+H)=\mathscr{R}$. \\
    \hline
    \hline
\end{tabular}
\label{tab:11-14}
\end{table}

\renewcommand{\arraystretch}{1.2}
\begin{table}
\caption{Summary of \cref{sec:combo}}
\vspace{3pt}
\centering
\begin{tabular}{l|rrr}
    \hline
    \rowcolor{gray!50} Reference & \multicolumn{3}{c}{Statement}  \\[2pt]
    \hline
    & \multicolumn{3}{c}{If $\set{A}$ is an \emph{outcome-stable},
    \emph{hereditary}, and} \\
    & \multicolumn{3}{c}{integer-invertible monoid with
    \hyperlink{prop:X}{Property X}, and} \\
    & \multicolumn{3}{c}{$G,H\in\pf{A}[A]$} \\
    \cmidrule(lr){2-4}
    \multirow{8}{*}{\cref{thm:final-piece}} &
    \multirow{2}{*}{if $\outcome(G)=\mathscr{L}$ and} & then if
    $\ntp(G)=\ltp(H)$, & 
    $\outcome(G+H)=\,\mathscr{L}$ \\
    \cmidrule(lr){3-4}
    & \multirow{1}{*}{$\outcome(H)=\mathscr{N}$,} & or if $\rtp(G)=\ltp(H)$,
    &
    $\outcome(G+H)=\mathscr{N}$ \\
    \cmidrule(lr){2-4}
    & \multirow{2}{*}{if $\outcome(G)=\mathscr{R}$ and} & then if
    $\ntp(G)=\rtp(H)$,
    & $\outcome(G+H)=\;\mathscr{R}$ \\
    \cmidrule(lr){3-4}
    & \multirow{1}{*}{$\outcome(H)=\mathscr{N}$,} & or if $\ltp(G)=\rtp(H)$,
    &
    $\outcome(G+H)=\mathscr{N}$ \\
    \cmidrule{2-4}
    & \multirow{1.3}{*}{if $\outcome(G)=\mathscr{N}$ and} & 
    \multirow{1.3}{*}{and if $\rtp(G)=\ltp(H)$} &
    \multirow{2}{*}{$\outcome(G+H)=\mathscr{N}$} \\
    & \multirow{1}{*}{$\outcome(H)=\mathscr{N}$,} & 
    \multirow{1}{*}{or $\ltp(G)=\rtp(H)$,} & \\
    \cmidrule{2-4}
    &  & if $\ntp(G)=\ltp(H)$, & 
    $\outcome(G+H)=\,\mathscr{L}$ \\
    \cmidrule(lr){3-4}
    & \multirow{1.5}{*}{if $\outcome(G)=\mathscr{L}$ and} & if
    $\ntp(G)=\ntp(H)$  &
    \multirow{2}{*}{$\outcome(G+H)=\mathscr{N}$} \\
    & \multirow{-1}{*}{$\outcome(H)=\mathscr{R}$,} & or $\rtp(G)=\ltp(H)$, & \\
    \cmidrule(lr){3-4}
    &  & if $\rtp(G)=\ntp(H)$, & $\outcome(G+H)=\;\mathscr{R}$ \\

    \hline
    \hline
\end{tabular}
\label{tab:combo}
\end{table}

\end{document}